\numberwithin{equation}{section}
\tikzset{->-/.style={decoration={
			markings,
			mark=at position #1 with {\arrow{>}}},postaction={decorate}}}
\newcommand{\CircNum}[1]{\ooalign{\hfil\raise .00ex\hbox{\scriptsize #1}\hfil\crcr\mathhexbox20D}}
\newcommand{\bC}{\mathbb{C}}
\newcommand{\bF}{\mathbb{F}}
\newcommand{\bH}{\mathbb{H}}
\newcommand{\bN}{\mathbb{N}}
\newcommand{\bO}{\mathbb{O}}
\newcommand{\bP}{\mathbb{P}}
\newcommand{\bQ}{\mathbb{Q}}
\newcommand{\bR}{\mathbb{R}}
\newcommand{\bZ}{\mathbb{Z}}
\newcommand{\cB}{\mathcal{B}}
\newcommand\lra{\longrightarrow}
\newcommand\Diff{\mathrm{Diff}}
\newcommand{\hcoker}{/\!\!/}
\newcommand{\map}{\mathrm{map}}
\newcommand{\Fr}{\mathrm{Fr}}
\newcommand{\R}{\bR}
\newcommand{\Z}{\mathbbm{Z}}
\newcommand{\Oo}{\mathcal{O}}
\newcommand{\hAut}{\mathrm{hAut}}
\newcommand{\moddd}{/\!\!/}
\renewcommand{\epsilon}{\varepsilon}
\newcommand{\SO}{\mathrm{SO}}
\newcommand{\OO}{\mathrm{O}}
\newcommand{\Spin}{\mathrm{Spin}}
\newcommand{\U}{\mathrm{U}}
\newcommand{\cL}{\mathcal{L}}
\newcommand{\cM}{\mathcal{M}}
\newcommand{\cE}{\mathcal{E}}
\newcommand{\nin}{\not\in}
\newcommand{\orientationpreserving}{\mathrm{or}}
\newcommand{\Sing}{\mathrm{Sing}}
\newcommand{\GL}{\mathrm{GL}}
\newcommand{\bk}{\mathbbm{k}}
\newcommand{\str}{\rho}
\mathchardef\ordinarycolon\mathcode`\:
\theoremstyle{plain}
\newtheorem{theorem}{Theorem}[section]
\newtheorem{proposition}[theorem]{Proposition}
\newtheorem{lemma}[theorem]{Lemma}
\newtheorem{corollary}[theorem]{Corollary}
\theoremstyle{definition}
\newtheorem{definition}[theorem]{Definition}
\newtheorem{example}[theorem]{Example}
\newtheorem{construction}[theorem]{Construction}
\theoremstyle{remark}
\newtheorem{remark}[theorem]{Remark}
\newtheorem*{remark*}{Remark}
\title[Moduli spaces of manifolds]{Moduli spaces of manifolds: a user's guide}
\author{S{\o}ren Galatius}
\email{galatius@math.ku.dk}
 \address{Department of Mathematics\\
   University of Copenhagen\\
   Denmark}
\author{Oscar Randal-Williams}
\email{o.randal-williams@dpmms.cam.ac.uk}
\address{Centre for Mathematical Sciences\\
Wilberforce Road\\
Cambridge CB3 0WB\\
UK}
\subjclass[2010]{57R90, 57R15, 57R56, 55P47}
\keywords{characteristic classes, diffeomorphism groups, homological stability, moduli spaces}
\begin{document}
\begin{abstract}
  We survey recent work on moduli spaces of manifolds with an emphasis
  on the role played by (stable and unstable) homotopy theory.  The
  theory is illustrated with several worked examples.
\end{abstract}
\maketitle

\section{Introduction} 

The study of manifolds and invariants of manifolds was begun more
than a century ago.  In this entry we shall discuss the parametrised
setting: invariants of \emph{families} of manifolds, parametrised by a
base manifold $X$.  The invariants we look for will be cohomology
classes in $X$, characteristic classes.

This article will be structured in the following way.
\begin{enumerate}[(i)]
\item A discussion of the abstract classification theory.
  The main content here is the precise definition of the kind of
  families we consider, and an outline of how a classifying space may
  be constructed.  There is one such classifying space for each pair $(W,\rho_W)$ consisting of a closed manifold $W$ and a tangential structure $\rho_W$, see \S\ref{sec:smooth-bundles-their}.  The more general case where $W$ is compact with boundary is briefly discussed in \S\ref{sec:boundary}.
\item Definition of Miller--Morita--Mumford classes, the main examples of
  characteristic classes.  With this definition in place, we state a first version of the main result of \cite{GR-W2, GR-W3, GR-W4}: a formula (see Theorem~\ref{thm:main-cohomological}) for rational cohomology of the classifying spaces in a range of degrees, for many instances of $(W,\rho_W)$.
\item Statement of the main result of \cite{GR-W2, GR-W3, GR-W4} in their general forms.  These statements require a bit more homotopy theory to formulate but apply quite generally, at least for even-dimensional manifolds, see the theorems in \S\ref{sec:gener-vers-main}.
\item Examples of calculations and applications.  The results surveyed in \S\S\ref{sec:char-class-XXX}--\ref{sec:gener-vers-main} are quite well suited for explicit calculations.  We believe this to be an important feature of the theory, and have included a supply of worked examples of various types to illustrate this aspect, many of which have not previously been published.  In \S\ref{sec:RatCalc} we will focus on calculations in rational cohomology, and we include a detailed study of some complete intersections.  In \S\ref{sec:AbCalc} we carry out some integral homology calculations, focusing on $H_1$.
\end{enumerate}

Our theorems apply in even dimensions $2n \geq 6$, but were inspired by the breakthrough theorem of Madsen and Weiss \cite{MW} in dimension $2$, building on earlier ideas of Madsen and Tillmann \cite{MT} and Tillmann \cite{Tillmann}.

\section{Smooth bundles and their classifying spaces}
\label{sec:smooth-bundles-their}

First, some conventions.  By \emph{smooth manifold}\index{manifold} we shall always mean a Hausdorff, second countable topological manifold equipped
with a maximal $C^\infty$ atlas, and \emph{smooth map} shall always mean
$C^\infty$. We shall generally use the letter $\mathcal{M}$
  with various decorations for variants of classifying spaces for families of manifolds,
  and the letter $\mathcal{F}$ with various decorations for the
  functor it classifies.

\subsection{Smooth bundles}
\label{sec:smooth-bundles}

Our notion of ``family'' of manifolds will be \emph{smooth fibre
  bundle}, possibly equipped with extra tangent bundle structure, as
follows.  If $V$ is a $d$-dimensional real vector bundle we shall
write $\mathrm{Fr}(V)$ for the associated frame bundle, which is a
principal $\GL_d(\R)$-bundle.

\begin{definition}\label{def:family} Let $d$ be a non-negative integer.
  \begin{enumerate}[(i)]
  \item A \emph{smooth fibre bundle}\index{fibre bundle} of dimension $d$ consists of smooth manifolds $E$
    and $X$ (without boundary) and a smooth proper map $\pi: E\to X$
    such that $D\pi: TE \to \pi^* TX$ is surjective and the vector
    bundle $T_\pi E = \mathrm{Ker}(D\pi)$ has $d$-dimensional fibres. The bundle $T_\pi E$ is called the \emph{vertical tangent bundle}.\index{vertical tangent bundle}
  \item If $\Theta$ is a space with a continuous action of
    $\GL_d(\R)$, a \emph{smooth fibre bundle with $\Theta$-structure}\index{$\Theta$-structure}
    consists of a smooth fibre bundle $\pi: E \to X$, together with a continuous
    $\GL_d(\R)$-equivariant map
    $\str: \mathrm{Fr}(T_\pi E) \to \Theta$.
  \end{enumerate}
\end{definition}

Typical choices of $\Theta$ include the terminal one $\Theta = \{\ast\}$,
and $\Theta = \Z^\times = \{\pm 1\}$ on which $\GL_d(\R)$ acts by multiplication by
the sign of the determinant.  In the former case a $\Theta$-structure
is no information, and in the latter it is the data of a continuously
varying family of orientations of the $d$-manifolds $\pi^{-1}(x)$.  (The space of equivariant maps $\mathrm{Fr}(T_\pi E) \to \Theta$ may be modelled in other ways, equivalent up to weak equivalence, see \S\ref{sec:gl_dr-spaces-versus}.)

Any smooth map $f: X' \to X$ will be transverse to any smooth fibre bundle
$\pi: E \to X$ as above, and the pullback $(f^* \pi): f^* E \to X'$ is
again a smooth fibre bundle.  Given a $\Theta$-structure $\str$ on
$\pi$, we shall write $f^*\str$ for the induced structure on
$(f^* \pi)$.

\subsection{Classifying spaces}
\label{sec:classifying-spaces}

\newcommand{\class}{\mathcal{F}}
\newcommand{\Man}{\mathsf{Man}}
\newcommand{\sSets}{\mathsf{sSets}}
\newcommand{\op}{\mathrm{op}}

The natural equivalence relation between the bundles considered above
is \emph{concordance}, which we recall.
\begin{definition} Let $\pi_0: E_0 \to X$ and $\pi_1: E_1 \to X$ be
  smooth bundles with $\Theta$-structures
  $\str_0: \Fr(T_{\pi_0} E_0) \to \Theta$ and
  $\str_1: \Fr(T_{\pi_1} E_1) \to \Theta$.
  \begin{enumerate}[(i)]
  \item An \emph{isomorphism} between $(\pi_0,\str_0)$ and
    $(\pi_1,\str_1)$ is a diffeomorphism $\phi: E_0 \to E_1$ over $X$,
    such that the induced map
    $\mathrm{Fr}(D_\pi \phi): \Fr(T_{\pi_0}E_0) \to \Fr(T_{\pi_1}E_1)$
    is over $\Theta$.
  \item A \emph{concordance}\index{concordance} between $(\pi_0,\str_0)$ and
    $(\pi_1,\str_1)$ is a smooth fibre bundle $\pi: E \to \R \times X$
    with $\Theta$-structure $\str: \Fr(T_\pi E) \to \Theta$, together
    with isomorphisms from $(\pi_0,\str_0)$ and $(\pi_1, \str_1)$ to
    the pullbacks of $(\pi,\str)$ along the two embeddings
    $X \cong \{0 \} \times X \subset \R\times X$ and
    $X \cong \{1 \} \times X \subset \R\times X$.
  \end{enumerate}
\end{definition}

Pulling back is functorial up to isomorphism and preserves being concordant.
\begin{definition}
  For a smooth manifold $X$ without boundary, let $\class^\Theta[X]$
  denote the set of concordance classes of pairs $(\pi,\str)$ of a
  smooth fibre bundle $\pi: E \to X$ with $\Theta$-structure
  $\str: \mathrm{Fr}(T_\pi E) \to \Theta$.  (Note that $X$ is fixed,
  but $E$ is allowed to vary.)
\end{definition}

\begin{theorem}\label{thm:existence-of-classi}
  The functor $X \mapsto \class^\Theta[X]$ is representable in the
  (weak)  sense that there exists a topological space
  $\mathcal{M}^\Theta$\index{moduli space!of $\Theta$-manifolds} and a natural bijection
  \begin{equation}\label{eq:3}
    \class^\Theta[X] \cong [X,\mathcal{M}^\Theta],
  \end{equation}
  where the codomain denotes homotopy classes of continuous maps.
\end{theorem}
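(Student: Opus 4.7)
The plan is to realize $\mathcal{M}^\Theta$ as the geometric realization of a sheaf on the category of smooth manifolds, following the Madsen--Weiss scheme. I would first rigidify the functor by replacing concordance classes with an honest sheaf of sets: let $\mathcal{F}^\Theta(X)$ be the set of pairs $(\pi,\str)$ where $\pi: E \to X$ is a smooth fibre bundle as in Definition~\ref{def:family} whose total space is a submanifold of $X \times \R^\infty$ with $\pi$ the restriction of the first projection, and $\str: \Fr(T_\pi E) \to \Theta$ is a $\Theta$-structure. Embedding into $X \times \R^\infty$ bypasses the set-theoretic size issue without restricting the homotopy type (every such bundle admits such a presentation by a relative Whitney argument), and pullback along open embeddings $X' \hookrightarrow X$ upgrades $\mathcal{F}^\Theta$ to a sheaf of sets on the category of smooth manifolds.

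Next I would invoke the general representability principle for concordance-invariant sheaves on manifolds: to such a sheaf $\mathcal{F}$ one associates the space $|\mathcal{F}|$ obtained as a geometric realization of the simplicial set $[k] \mapsto \mathcal{F}(\Delta^k_e)$ built on extended simplices, and for paracompact $X$ there is a natural bijection $\mathcal{F}[X] \cong [X, |\mathcal{F}|]$ between concordance classes and homotopy classes of continuous maps. Setting $\mathcal{M}^\Theta = |\mathcal{F}^\Theta|$ then yields the theorem. Injectivity of the map $\mathcal{F}^\Theta[X] \to [X, \mathcal{M}^\Theta]$ is immediate: a concordance over $\R \times X$ is precisely the data of a homotopy between the two classifying maps via restriction along $\{0,1\} \times X$.

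The main obstacle is surjectivity: given a continuous map $f: X \to \mathcal{M}^\Theta$, one must produce a smooth bundle with $\Theta$-structure over $X$ classifying it up to concordance. The standard strategy is to homotope $f$ so that it becomes a simplicial map with respect to some smooth triangulation of $X$, and then assemble the tautological bundles carried by the simplices of $|\mathcal{F}^\Theta|$ into a global smooth bundle using the gluing data encoded by the face maps together with the sheaf property of $\mathcal{F}^\Theta$. This is routine in principle but contains all the technical content, and it is what forces one to use extended simplices rather than closed standard ones so that restrictions along faces behave as strict restrictions of bundles. As a by-product one obtains the concrete description
$$\mathcal{M}^\Theta \simeq \coprod_{[W]} \Bun^\Theta(TW) \hcoker \Diff(W),$$
with the disjoint union running over diffeomorphism classes of closed $d$-manifolds $W$ and $\Diff(W)$ acting on $\Bun^\Theta(TW)$ by pullback; this component-by-component model will be convenient for the applications in the rest of the paper.
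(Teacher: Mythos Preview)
Your proposal is correct and follows the Madsen--Weiss scheme from \cite[Section 2]{MW}, which the paper explicitly acknowledges as one valid route (and indeed as the route taken in the authors' earlier papers \cite{GR-W2, GR-W3, GR-W4}).  The construction of $\mathcal{M}^\Theta$ as the realisation of $[k]\mapsto \class^\Theta(\Delta^k_e)$ is the same in both approaches; the difference lies in how one verifies the bijection~(\ref{eq:3}).

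You argue by rigidifying to a genuine sheaf via embeddings in $X\times\R^\infty$, then invoking the general sheaf-representability machinery: injectivity is formal, and surjectivity is obtained by choosing a smooth triangulation of $X$ and assembling the tautological bundles over simplices.  The paper instead upgrades $\class^\Theta[X]$ to a simplicial set $\class^\Theta_\bullet(X)$ and proves a space-level statement, namely a natural weak equivalence $\class^\Theta_\bullet(X) \xrightarrow{\simeq} \mathrm{Maps}(\Sing(X),\class^\Theta_\bullet(\{\ast\}))$, by a homotopy-descent argument: both sides take open covers to homotopy pullbacks and increasing unions to homotopy limits, so the claim reduces first to $X=\R^n$ and then, using concordance invariance, to $X=\{\ast\}$ where it is trivial.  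Your approach has the virtue of yielding the explicit point-set model and the decomposition $\coprod_{[W]} \Bun^\Theta(TW)\moddd\Diff(W)$ directly; the paper's approach avoids the triangulation-and-gluing technicalities entirely and gives the stronger statement that the bijection on $\pi_0$ comes from a weak equivalence of mapping \emph{spaces}.
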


There are various ways to prove this representability statement.  In
the series \cite{GR-W2, GR-W3, GR-W4} we did this by constructing explicit point-set
models in terms of submanifolds of $\R^\infty$.  Instead of repeating
what we said there, let us outline an approach based on simplicial sets and the
observation that the functor $\class^\Theta$ may be upgraded to take
values in \emph{spaces}, and the natural bijection~(\ref{eq:3}) may be
upgraded to a natural weak equivalence to the mapping
space.
\begin{definition}\label{defn:represented-functor}
  Let $\Delta^\bullet_e$ be the cosimplicial smooth manifold given by
  the extended simplices
  $\Delta^p_e = \{t \in \R^{p+1} \mid t_0 + \dots + t_p = 1\}$.

  For a smooth manifold $X$ let $\class^\Theta_\bullet(X)$ denote the
  simplicial set whose $p$-simplices are all pairs $(\pi,\str)$
  consisting of a smooth fibre bundle $\pi: E \to \Delta^p_e \times X$
  with $\Theta$-structure $\str: \Fr(T_\pi E) \to \Theta$.
\end{definition}

As they stand, these definitions are not quite rigorous:
$\class^\Theta[X]$ and $\class^\Theta_p(X)$ are not (small) sets for
size reasons, and $\class^\Theta_p(X)$ is not functorial in
$[p] \in \Delta$ because pullback is not strictly associative on the
level of underlying sets.  This may be fixed in standard ways, e.g.\
by requiring the underlying set of $E$ to be a subset of
$X \times \Omega$ for a set $\Omega$ of sufficiently large
cardinality.  See e.g.\ \cite[Section 2.1]{MW} for more
detail.
\begin{proposition}
  Let $\Man$ and $\sSets$ be the categories of smooth manifolds and
  simplicial sets, and let
  $\class^\Theta_\bullet: \Man^\op \to \sSets$ be the functor defined
  above.  Then we have a natural bijection
  \begin{equation*}
    \class^\Theta[X] \cong \pi_0 \class^\Theta_\bullet (X)
  \end{equation*}
  and a natural weak equivalence of simplicial sets
  \begin{equation}\label{eq:4}
    \class^\Theta_\bullet(X) \xrightarrow\simeq
    \mathrm{Maps}(\mathrm{Sing}(X),\class_\bullet(\{\ast\})).
  \end{equation}
  Consequently, we may take $\mathcal{M}^\Theta := |\class^\Theta_\bullet(\{\ast\})|$ in Theorem~\ref{thm:existence-of-classi}.
\end{proposition}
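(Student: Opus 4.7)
The $0$-simplices of $\class^\Theta_\bullet(X)$ are precisely smooth fibre bundles with $\Theta$-structure over $\Delta^0_e \times X = X$, so they agree as a set with the representatives of $\class^\Theta[X]$. By definition $\pi_0 \class^\Theta_\bullet(X)$ is the set of $0$-simplices modulo the equivalence relation generated by $1$-simplices, and a $1$-simplex is a bundle over $\Delta^1_e \times X \cong \R \times X$ whose face restrictions are the two given $0$-simplices---i.e.\ a concordance, once one absorbs the distinction between equality and isomorphism of restrictions by using the isomorphism data to modify the bundle over $\R \times X$. What remains is to verify that concordance is already an equivalence relation: reflexivity by pullback along the projection $\R \times X \to X$; symmetry via the diffeomorphism $t \mapsto -t$; and transitivity by gluing two concordances along their common face via bump functions (exploiting the fact that each concordance is already a product in a neighbourhood of its endpoints, which may be arranged after a reparametrisation).

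\textbf{Weak equivalence.} I would first replace $\mathrm{Sing}(X)$ by the smooth singular complex $\mathrm{Sing}^\infty(X)$, whose $q$-simplices are smooth maps $\Delta^q_e \to X$; the inclusion $\mathrm{Sing}^\infty(X) \hookrightarrow \mathrm{Sing}(X)$ is a weak equivalence by a standard smoothing argument. The natural map of~(\ref{eq:4}) then admits a clean description: a $p$-simplex $(\pi, \str)$ of $\class^\Theta_\bullet(X)$, together with a smooth $\sigma: \Delta^q_e \to X$ and a simplicial $\alpha: [q] \to [p]$, determines a $q$-simplex of $\class^\Theta_\bullet(\{\ast\})$ by pulling $(\pi, \str)$ back along $(\Delta^\alpha, \sigma): \Delta^q_e \to \Delta^p_e \times X$. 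To prove this is a weak equivalence, the key is to verify two properties of $X \mapsto \class^\Theta_\bullet(X)$ as a simplicial presheaf on $\Man^\op$: (i) \emph{homotopy invariance}, namely that pullback along $\R \times X \to X$ is a weak equivalence, with a homotopy inverse given by restriction along any section, the two composites being related by obvious concordances; and (ii) \emph{descent}, namely that for an open cover $\{U_\alpha\}$ of $X$ the map from $\class^\Theta_\bullet(X)$ to the homotopy limit of the Čech diagram of $\class^\Theta_\bullet(U_{\alpha_0} \cap \cdots \cap U_{\alpha_k})$ is a weak equivalence, which holds because bundles, vertical tangent bundles, and $\Theta$-structures all glue uniquely over open covers. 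The right-hand side of~(\ref{eq:4}) has the same two properties automatically (via $\mathrm{Sing}$), and both sides agree tautologically for $X = \{\ast\}$. A standard induction over a good open cover of $X$ then propagates the equivalence from $\{\ast\}$ to all manifolds.

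\textbf{Main obstacle.} The delicate step is descent, because one needs to glue not just ordinary bundles but entire simplicial families over $\Delta^p_e \times X$ in a way that is natural in the simplicial direction $[p]$. Concretely, given bundles over $\Delta^p_e \times U_\alpha$ with isomorphisms on intersections that are coherent up to higher concordance, one must produce an actual bundle over $\Delta^p_e \times X$, and the resulting construction has to commute with face and degeneracy maps. This plus the smoothing comparison between $\mathrm{Sing}$ and $\mathrm{Sing}^\infty$ are the only substantive analytic inputs; everything else is a formal consequence of the sheaf/homotopy-invariance formalism. The final statement about $\mathcal{M}^\Theta := |\class^\Theta_\bullet(\{\ast\})|$ is then immediate: $\class^\Theta[X] = \pi_0 \class^\Theta_\bullet(X) = \pi_0 \mathrm{Maps}(\mathrm{Sing}(X), \class^\Theta_\bullet(\{\ast\})) = [X, |\class^\Theta_\bullet(\{\ast\})|]$ by the usual adjunction.
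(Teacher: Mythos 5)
Your proposal follows essentially the same route as the paper's own proof sketch: identify $1$-simplices with concordances for the first claim, define the comparison map by pullback, and establish it is a weak equivalence by combining a homotopy-invariance property with a gluing/descent property, with the base case $X=\{\ast\}$ being tautological. The variation is largely cosmetic: the paper verifies descent only for two-element open covers $X=U\cup V$ and countable increasing unions (reducing first to $X=\R^n$, then to a point via homotopy invariance), whereas you invoke \v{C}ech descent for arbitrary covers followed by an induction over a good cover; both are standard implementations of the same strategy. Two small points worth flagging. First, in the surrounding text the paper already \emph{defines} $\mathrm{Sing}(X)$ to be the smooth singular set $[p]\mapsto C^\infty(\Delta^p_e,X)$, so your replacement of $\mathrm{Sing}$ by a smooth variant $\mathrm{Sing}^\infty$ is a restatement of the paper's convention rather than an extra step. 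Second, for the first bijection the paper's one-line argument implicitly uses that the relation generated by $1$-simplices already coincides with concordance; your explicit verification that concordance is an equivalence relation (including the collaring/reparametrisation needed to glue concordances smoothly) fills in exactly the point the paper elides, and is correct.
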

In this statement we  take $\mathrm{Sing}(X)$ to mean the smooth singular set, i.e.\ the simplicial set $[p] \mapsto C^\infty(\Delta^p_e,X)$.  It is equivalent to the usual simplicial set made out of all continuous maps $\Delta^p \to X$ by a smooth approximation argument, and in particular the evaluation map $|\mathrm{Sing}(X)| \to X$ is a homotopy equivalence by Milnor's theorem.  The codomain of~(\ref{eq:4}) is the simplicial set of maps into the Kan complex $\class_\bullet(\{\ast\})$, homotopy equivalent to the space of maps $X \to |\class_\bullet(\{\ast\})|$.
\begin{proof}[Proof sketch]
  The first claim follows by identifying 1-simplices of
  $\class^\Theta_\bullet(X)$ with concordances between 0-simplices.

  For the second claim, we define the
  map~(\ref{eq:4}) by pulling back.  To check that it
  is a weak equivalence one first verifies that both sides send open
  covers $X = U \cup V$ to homotopy pullback squares and countable
  increasing unions to homotopy limits.  Hence it suffices to check the
  case $X = \R^n$.  Then one checks that both sides send
  $X \times \R \to X$ to a weak equivalence, so it suffices to check
  $X = \{\ast\}$, which is obvious.

  The third claim follows by combining the first and the second claim and the homotopy equivalence $|\Sing(X)| \to X$ given by evaluation.  Alternatively it may be quoted directly from \cite[Section 2.4]{MW}.
\end{proof}

The above proof of Theorem~\ref{thm:existence-of-classi} gives an explicit bijection~(\ref{eq:3}).  Indeed, an element $(\pi,\str) \in \class_0(X)$ gives a morphism of simplicial sets $\Sing(X) \to \class^\Theta_\bullet(\{\ast\})$ and hence a canonical zig-zag
\begin{equation}\label{eq:24}
  X \overset{\mathrm{ev}}\longleftarrow |\Sing(X)| \overset{(\pi,\str)}\lra | \class^\Theta_\bullet(\{\ast\})| = \cM^\Theta.
\end{equation}
The map $\mathrm{ev}: |\Sing(X)| \to X$ is a homotopy equivalence, and we may choose a homotopy inverse.  For example, a smooth triangulation of $X$ gives such an inverse, which is even a section.  The resulting homotopy class of map $X \to \cM^\Theta$ corresponds to $[(\pi,\str)]$ under the bijection~(\ref{eq:3}).

For later purposes, let us explain how a universal fibration $\pi^\Theta : \cE^\Theta \to \cM^\Theta$ modelling the bundle $\pi: E \to X$ may be constructed by the same simplicial method.  Let $\widetilde{\class}^\Theta_\bullet(X)$ be the simplicial set whose $p$-simplices are triples $(\pi,\str,s)$ where $(\pi,\str)$ is as before and $s: \Delta^p_e \times X \to E$ is a section of $\pi$, and set $\cE^\Theta := |\widetilde{\class}^\Theta_\bullet(\{\ast\})|$.  The zig-zag~(\ref{eq:24}) above associated to an element $(\pi,\str) \in \class_0^\Theta(X)$ now extends to a canonical diagram
\begin{equation*}
\begin{tikzcd}
  E \dar[']{\pi} & {|\Sing(E)|} \lar[']{\simeq}\dar\rar & \cE^\Theta \dar{\pi^\Theta}\\
  X & {|\Sing(X)|} \lar[']{\simeq} \rar{(\pi,\str)} & \cM^\Theta
\end{tikzcd}
\end{equation*}
in which both squares are homotopy cartesian, and the top right-hand map is that associated to the data $(\mathrm{pr}_1 : E \times_X E \to E, \str\circ D\mathrm{pr}_2, \mathrm{diag} : E \to E \times_X E)$.

Finally, a $p$-simplex $(\pi: E \to \Delta^p_e, \str, s)$ of $\cE^\Theta$ gives a map $\ell = (\str/\GL_d(\bR)): E \cong (\Fr(T_\pi E)/\GL_d(\R)) \to (\Theta/\GL_d(\R))$.  Composing with the section $s\vert_{\Delta^p}: \Delta^p \to E$ then gives rise to a map of simplicial sets 
$$\widetilde{\class}^\Theta_\bullet(\{*\}) \lra \Sing(\Theta/\GL_d(\R))$$
realising to a map $\cE^\Theta \to |\Sing(\Theta/\GL_d(\R))|$. The orbit space $\Theta/\GL_d(\R)$ need not be well behaved, and we would like to replace it by the homotopy orbit space $B = \Theta \moddd \GL_d(\R) := (E\GL_d(\R) \times \Theta)/\GL_d(\R)$. Taking homotopy orbits of the map $\Theta \to \{*\}$ yields a map $\theta : B \to B\GL_d(\R)$. Repeating the construction above with $E\GL_d(\R) \times \Theta$ instead of $\Theta$ allows us to construct a zig-zag
\begin{equation*}
  \cE^\Theta \overset{\simeq}\longleftarrow \cE^{E \GL_d(\R) \times \Theta} \overset{\ell}\lra |\Sing(B)| \overset{\mathrm{ev}}\lra B \overset{\theta}\lra B\GL_d(\R).
\end{equation*}

Slightly less precisely, we shall summarise this situation as a diagram
\begin{equation}\label{eq:25}
  \begin{tikzcd}
    E \dar[']{\pi} \rar & \cE^\Theta \dar{\pi^\Theta} \rar{\ell} & B \rar{\theta}& B\GL_d(\R)
    \\
    X \rar &  \cM^\Theta
  \end{tikzcd}
\end{equation}
where the square is homotopy cartesian. The vector bundle classified by the composition $\theta\circ\ell: \cE^\Theta \to B\GL_d(\R)$ is a universal instance of $T_\pi E$, and shall be denoted $T_\pi \cE^\Theta$.  The factorisation through $\theta$ gives an equivariant map $\str^\Theta : \Fr(T_\pi \cE^\Theta) \to \Theta$, a universal instance of $\str: \Fr(T_\pi E) \to \Theta$.

\subsection{Path connected classifying spaces}

The bundles classified so far are inconveniently general.  For
example, we have not made any restrictions on the diffeomorphism type
of the fibres of $\pi: E \to X$.  Hence, if e.g.\ $\Theta = \{\ast\}$,
the set $\pi_0(\mathcal{M}^\Theta)$ is in bijection with the set of
diffeomorphism classes of compact smooth $d$-manifolds without
boundary, a countably infinite set for any $d \geq 0$.  The homotopy
type of $\mathcal{M}^\Theta$ then encodes at once the classification of 
smooth manifolds up to diffeomorphism and the classification of smooth
bundles, because it is the ``moduli space'' (or ``$\infty$-groupoid'')
of \emph{all} $d$-manifolds.

We shall often study one path component of
$\mathcal{M}^\Theta$ at a time, which corresponds to fixing the
concordance class of the fibres of the classified bundles.  We
introduce the following notation.
\begin{definition}
  Let $\Theta$ be a space with $\GL_d(\R)$ action, $W$ be a
  compact $d$-manifold without boundary, and
  $\str_W: \Fr(TW) \to \Theta$ be an equivariant map.  Considering this as a family over a point yields a $[(W,\str_W)] \in \pi_0 \mathcal{M}^\Theta$, and we shall write
  $\mathcal{M}^\Theta(W,\str_W) \subset \mathcal{M}^\Theta$\index{moduli space!of $\Theta$-manifolds} for the
  path component containing $(W,\str_W)$.  This path component is a classifying space
  for smooth fibre bundles $\pi: E \to X$ with structure
  $\str: \Fr(T_\pi E) \to \Theta$, whose restriction to any point
  $\{x\} \subset X$ is concordant to $(W,\str_W)$.
\end{definition}

\begin{remark}\label{remark:special-cases}
  In the special case $\Theta = \{\ast\}$ the maps $\str: \Fr(T_\pi E) \to \Theta$ are irrelevant, and in this case we shall write simply $\cM(W)$.  This space classifies smooth bundles $\pi: E \to X$ with fibres diffeomorphic to $W$ with no further structure, and we have the weak equivalence
  \begin{equation*}
    \cM(W) \simeq B\Diff(W),
  \end{equation*}
  where $\Diff(W)$ is the diffeomorphism group\index{diffeomorphism group} of $W$ in the $C^\infty$ topology.  Similarly, if $\Theta = \Z^\times$ with the orientation action and $W$ is given an orientation $\str_W$, we shall write $\cM^\orientationpreserving(W)$ for $\cM^\Theta(W,\str_W)$ and there is a weak equivalence
  \begin{equation*}
    \cM^\orientationpreserving(W) \simeq B\Diff^\orientationpreserving(W),
  \end{equation*}
  where $\Diff^\orientationpreserving(W) \subset \Diff(W)$ is the subgroup of orientation preserving diffeomorphisms.  For general $\Theta$, the homotopy type is described as the Borel construction
   \begin{equation*}
    \mathcal{M}^\Theta(W,\str_W) \simeq
    \frac{\{\text{$\str: \Fr(TW) \to \Theta$, equivariantly homotopic to
        $\str_W$}\}}
      {\left(\substack{\text{topological group of diffeomorphisms $W
              \to W$} \\ \text{preserving the
          equivariant homotopy
          class of $\str_W$}}\right)}.
  \end{equation*}
   See \cite[Definition 1.5]{GR-W2}, \cite[Section 1.1]{GR-W2}, or \cite[Section 1.1]{GR-W4} for further discussion of this point of view. 
\end{remark}

\section{Characteristic classes}
\label{sec:char-class-XXX}  

The main topic of this article is the study of characteristic classes
of the sort of bundles described above, i.e.\ the calculation of the
cohomology ring of the classifying spaces
$\mathcal{M}^\Theta(W,\str_W)$.  We first recall the conclusions in
rational cohomology, which are easier to state and often gives an
explicit {formula} for the ring of characteristic classes.

\subsection{Characteristic classes}
\label{sec:char-class-1}

As before, let $B = \Theta \moddd \GL_d(\R)$ denote the Borel construction.  For a smooth bundle $\pi: E \to X$ with $\Theta$-structure
$\str: \Fr(T_\pi E) \to \Theta$, we may form the Borel construction
with $\GL_d(\R)$ and obtain a correspondence, i.e.\ a diagram
\begin{equation}\label{eq:1}
  X \overset{\pi}\longleftarrow E \overset{\simeq}\longleftarrow \Fr(T_\pi E)\moddd
  \GL_d(\R) \xrightarrow{\str\moddd \GL_d(\R)} B.
\end{equation}
We write $\ell: E \to B$ for the homotopy class of maps associated to $\str\moddd \GL_d(\R)$. We shall let $\Z^{w_1}$ denote the coefficient system on $B$ arising from
the non-trivial action of $\pi_0(\GL_d(\R)) = \Z^\times$ on $\Z$, and
let $A^{w_1} = A \otimes \Z^{w_1}$ for an abelian group $A$, and we shall use the same notation for these coefficient systems pulled back to $E$ along~(\ref{eq:1}).  Then we have a homomorphism
\begin{equation}
  \label{eq:20}
  \ell^*: H^{k+d}(B;A^{w_1}) \lra H^{k+d}(E;A^{w_1}).
\end{equation}
We also have a
\emph{fibre integration}\index{fibre integration} homomorphism
\begin{equation}
  \label{eq:17}
  \int_\pi: H^{k+d}(E;A^{w_1}) \lra H^k(X;A),
\end{equation}
where $d$ is the dimension of the fibres of $\pi$.  It may be
defined e.g.\ as the composition $H^{k+d}(E;A^{w_1}) \twoheadrightarrow E_\infty^{k,d} \subset E_2^{k,d}$ in the Serre spectral sequence for the fibration $\pi$, or by a Pontryagin--Thom\index{Pontryagin--Thom construction} construction as in \S\ref{sec:mmm-class-gener} below.

Given a class $c \in H^{d + k}(B;A^{w_1})$, we may combine~(\ref{eq:20}) and~(\ref{eq:17}) and define the \emph{Miller--Morita--Mumford classes}\index{Miller--Morita--Mumford class}\index{MMM-class} (or MMM-classes) by the push-pull formula
\begin{equation*}
  \kappa_c(\pi) = \int_\pi \ell^*c \in H^k(X;A).
\end{equation*}
This class is easily seen to be natural with respect to pullback along
smooth maps $X' \to X$, and in fact comes from a universal class
\begin{equation*}
  \kappa_c \in H^k(\mathcal{M}^\Theta(W,\str_W);A)
\end{equation*}
for any $(W,\str_W)$, any $A$, and any $c \in H^{k+d}(B;A^{w_1})$, defined by the analogous push-pull formula in the universal instance~(\ref{eq:25}).

In the special case $k = 0$ and $X = \{\ast\}$, the definition of
$\kappa_c \in H^0(X;A) = A$ simply reproduces the usual
\emph{characteristic numbers},\index{characteristic number} c.f.\ \cite[\S16]{MS}.  For example, if $d = 2n$ and $e$ comes from the Euler class in $H^d(B\GL_d(\bR);\bZ^{w_1})$, then $\kappa_e \in H^0(X;\bZ) = \bZ$ is the Euler characteristic of $W$. Henceforth we shall mostly be interested in the case $k > 0$.

If $A=\bk$ is a field, we may combine with cup product to get a map of graded rings
\begin{equation}\label{eq:2}
  \bk[\kappa_c \mid c \in \text{basis of $H^{>d}(B;\bk^{w_1})$}] \lra H^*(\mathcal{M}^\Theta(W,\str_W);\bk),
\end{equation}
whose domain is the free graded-commutative $\bk$-algebra on symbols $\kappa_c$, one for each element $c$ in a chosen basis (or more invariantly, on a degree-shifted copy of the graded $\bk$-vector space $H^{>d}(B;\bk^{w_1})$).

\subsection{Genus}\label{sec:genus}

All of our results will hold in a range of degrees depending on \emph{genus}, a numerical invariant that we first introduce.

Assume that $d = 2n$ and let a $\GL_{2n}(\R)$-space $\Theta$ be
given.
We shall assume that $n > 0$ and that the homotopy orbit space
$\Theta\moddd \GL_{2n}(\R)$ is connected, i.e.\ that
$\pi_0(\GL_{2n}(\R)) = \Z^\times$ acts transitively on
$\pi_0(\Theta)$.

The genus will be defined in terms of the manifold obtained from $S^n \times S^n$ by removing a point, which plays a special role in this theory.  Up to diffeomorphism this manifold may be obtained
as a pushout
\begin{equation}\label{eq:26}
S^n \times \R^n \hookleftarrow \R^n \times \R^n \hookrightarrow \R^n
\times S^n,
\end{equation}
where the embeddings are induced by a choice of
coordinate chart $\R^n \hookrightarrow S^n$.  Following
\cite[Definition 1.3]{GR-W3} a $\Theta$-structure on $S^n \times \R^n$ shall be
called \emph{admissible} if it ``bounds a disk'', i.e.\ is (equivariantly) homotopic
to a structure that extends over some embedding
$S^n \times \R^n \hookrightarrow \R^{2n}$.  Note that this is automatic if $\pi_n(\Theta) = 0$ for some basepoint.
A structure on
$S^n \times S^n \setminus \{\ast\}$ is admissible if the
restriction to each piece of the gluing~(\ref{eq:26}) is admissible.

\begin{definition}
  Assume $d = 2n > 0$ and that $W$ is connected.  The \emph{genus}\index{genus} $g(W, \str_W)$
  of a $\Theta$-manifold $(W,\str_W)$ is the maximal number of disjoint embeddings
  $j: S^n \times S^n \setminus \{\ast\} \to W$ such that $j^*\str_W$ is
  admissible.
\end{definition}
This appeared as \cite[Definition 1.3]{GR-W3}.  For example, when
$n = 1$ and $\Theta = \pi_0(\GL_2(\R)) = \bZ^\times$, this is
precisely the usual genus of an oriented connected 2-manifold.  The admissibility condition may be illustrated by the case $\Theta = \GL_2(\R)$, corresponding to framings on 2-manifolds.  The Lie group framing on $\Sigma = S^1 \times S^1$ satisfies  $g(\Sigma,\rho_\mathrm{Lie}) = 0$, but there exist other framings $\rho$ for which $g(\Sigma,\rho) = 1$.

In \S\ref{sec:disc-moduli-spac} below we shall explain how to determine a lower bound on the number $g(W,\str_W)$ in terms of more accessible invariants.

\subsection{Main theorem in rational cohomology}
\label{sec:main-theor-rati}

The main results of \cite{GR-W2, GR-W3, GR-W4} imply that the ring
homomorphism homomorphism~(\ref{eq:2}) is often an isomorphism in a
range of degrees when $\bk = \bQ$.
We explain the statement.

\begin{definition}
  Assume $\Theta\moddd \GL_{2n}(\R)$ is connected.  We say $\Theta$ is
  \emph{spherical}\index{$\Theta$-structure!spherical} if $S^{2n}$ admits a $\Theta$-structure, i.e.\ if
  there exists an equivariant map $\Fr(TS^{2n}) \to \Theta$.
\end{definition}
The condition is equivalent to existence of an $\OO(2n)$-equivariant map
$\OO(2n+1) \to \Theta$.  This obviously holds if the $\GL_{2n}(\R)$-action on
$\Theta$ admits an extension to an action of $\GL_{2n+1}(\R)$ which
holds in many cases, e.g.\ $\Theta = \{\pm 1\}$ with the orientation
action.  See \cite[Section 5.1]{GR-W2} for more information about this
condition.

\begin{theorem}\label{thm:main-cohomological}
  Let $d = 2n > 4$, $W$ be a closed simply-connected $d$-manifold, and $\str_W: \Fr(W) \to \Theta$ be a $\Theta$-structure which is $n$-connected (or, equivalently, such that the associated $\ell_W : W \to B = \Theta \moddd \GL_d(\bR)$ is $n$-connected). Equip $B$ with the local system $\bQ^{w_1}$ as above, and assume that $H^{k+d}(B;\bQ^{w_1})$ is finite dimensional for each $k \geq 1$. Then the ring homomorphism
  \begin{equation*}
      \bQ[\kappa_c \mid c \in \text{basis of $H^{>d}(B;\bQ^{w_1})$}] \lra H^*(\mathcal{M}^\Theta(W,\str_W);\bQ),
  \end{equation*}
  as in~(\ref{eq:2}) is an isomorphism in cohomological degrees $\leq (g(W, \str_W)-4)/3$. If in addition $\Theta$ is spherical, then the range may be improved
  to $ \leq (g(W, \str_W)-3)/2$.
\end{theorem}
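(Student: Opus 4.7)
My plan is to factor the result through two major inputs: a parametrised Pontryagin--Thom theorem that identifies a ``stabilised'' moduli space with an infinite loop space, and a homological stability theorem that relates $\cM^\Theta(W,\str_W)$ to this stabilised moduli space. The genus and the explicit range of degrees will enter entirely through stability, while the shape of the answer (a polynomial ring on MMM-classes) will come entirely from the stable computation.

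First I would set up a stabilisation map. Since I may assume $g(W,\str_W) \geq 4$ (otherwise the range is vacuous), there is an admissible embedding $S^n \times S^n \setminus \{\ast\} \hookrightarrow W$, and I can form the $\Theta$-manifold $W \# (S^n \times S^n)$ with an induced $\Theta$-structure. Gluing families along a boundary collar defines a map $\cM^\Theta(W,\str_W) \to \cM^\Theta(W \# (S^n \times S^n), \str')$, whose iterated colimit I denote $\cM^\Theta_\infty$. The first input asserts that this stabilisation map induces an isomorphism on $H^*(-;\bQ)$ in cohomological degrees $\leq (g-4)/3$, improved to $\leq (g-3)/2$ when $\Theta$ is spherical. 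This is proved by resolving $\cM^\Theta(W,\str_W)$ by a semi-simplicial space whose $p$-simplices parametrise $(p+1)$-tuples of disjoint admissible embeddings $S^n \times S^n \setminus \{\ast\} \hookrightarrow W$, running the associated spectral sequence, and establishing the required high connectivity of the resolving semi-simplicial space using that $W$ is simply-connected and $\str_W$ is $n$-connected. In the spherical case one refines the resolution to use (closed) embeddings of $S^n \times S^n$, which produces the better slope $1/2$.

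The second input is the parametrised Pontryagin--Thom identification $\cM^\Theta_\infty \simeq \Omega_0^\infty \mathrm{MT}\theta$, where $\mathrm{MT}\theta$ is the Thom spectrum of the virtual bundle $-\theta^* \gamma_{2n}$ over $B$. Combined with a group-completion argument applied to the cobordism category of $\Theta$-manifolds, this reduces the computation of the stable rational cohomology to the Borel--Serre computation of $H^*(\Omega_0^\infty \mathrm{MT}\theta;\bQ)$: it is the free graded-commutative $\bQ$-algebra on $\widetilde{H}^{>0}(\mathrm{MT}\theta;\bQ)$. The Thom isomorphism, with $\bQ$-coefficients twisted by the orientation character $w_1$, rewrites this as the free graded-commutative algebra on $H^{>d}(B;\bQ^{w_1})$, matching the domain of~(\ref{eq:2}) exactly in the range where the target is finite dimensional. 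It then remains to check that the generator corresponding to $c \in H^{k+d}(B;\bQ^{w_1})$ in this description agrees with $\kappa_c$ from the push-pull construction, which is a direct unpacking since fibre integration in~(\ref{eq:17}) is by definition the Pontryagin--Thom umkehr map associated to $\pi$.

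I expect the main obstacle to be the homological stability input, in particular the connectivity bounds for the resolving semi-simplicial space. This is where the hypotheses on $W$ (simple-connectivity) and on $\str_W$ ($n$-connectedness) are essential, and where the specific slopes $1/3$ and $1/2$ are produced. The stable identification and Borel--Serre computation, while substantial, are relatively formal once the Madsen--Tillmann--Weiss infinite loop space and the cobordism category of $\Theta$-manifolds are in place; the genuinely hard analytical and geometric work is in the stability range.
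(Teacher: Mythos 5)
Your proposal is correct and follows essentially the same route as the paper: Theorem~\ref{thm:main-cohomological} is deduced from the stable homotopy-theoretic statement (Theorem~\ref{thm:homotopical}, which combines the parametrised Pontryagin--Thom theorem from \cite{GR-W2} with the homological stability theorem from \cite{GR-W3}) by the Thom isomorphism plus the rational splitting of $MT\Theta$ into Eilenberg--MacLane spectra, exactly as in Remark~\ref{rem:RatCalc}. The only small imprecision is that the comparison $\cM^\Theta_\infty \to \Omega_0^\infty MT\theta$ is acyclic (a homology equivalence) rather than a weak equivalence, since the fundamental groups differ; this does not affect your argument in rational cohomology.
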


\subsection{Estimating genus}
\label{sec:disc-moduli-spac}

To apply Theorem~\ref{thm:main-cohomological} we must calculate the invariant $g(W,\str_W)$, or at least be able to give useful lower bounds for it.  This section explains a general such lower bound, under the assumptions that $d = 2n > 4$, and that the homotopy orbit space $B = \Theta \moddd \GL_d(\bR)$ is simply-connected.  We may then choose an equivariant map $\Theta \to \Z^\times$ by which any $\Theta$-structure induces an orientation, and we shall assume given such a map.

There is an obvious upper bound for $g(W,\str_W)$: it is certainly no
larger than the number of hyperbolic summands in $H_n(W;\Z)$ equipped
with its intersection form.  For odd $n$ this is in turn no more than
$b_n/2$ and for even $n$ it is no more than $\min(b_n^+,b_n^-)$, where
we write $b_n$ for the middle Betti number of $W$ and in the even case,
$b_n = b_n^+ + b_n^-$ for its splitting into positive and negative
parts.  More usefully, \cite[Remark 7.16]{GR-W3} gives the following \emph{lower}
bound on genus.

\begin{theorem}
  Assume $d = 2n > 4$, that the homotopy orbit space
  $B = \Theta \moddd \GL_d(\R)$ is simply-connected, and that
  $\ell_W: W \to B$ (or equivalently $\str_W: \Fr(TW) \to \Theta$) is
  $n$-connected.  Write $g^a(W) = \min(b_n^+,b_n^-)$ for $n$
  even and $g^a(W) = b_n/2$ for $n$ odd.  Then
  \begin{equation}\label{eq:15}
    g^a(W) - c \leq g(W,\str_W) \leq g^a(W),
  \end{equation}
  with $c = 1+e$, where $e$ is the minimal number of generators of
  the abelian group $H_n(B;\Z)$.  If $n$ is even or if $n \in \{3,7\}$ then one may take
  $c = e$.
\end{theorem}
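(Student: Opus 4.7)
The plan is to treat the two bounds separately. The upper bound follows directly from the definition, while the lower bound combines lattice arithmetic in the kernel of $\ell_*: H_n(W;\Z) \to H_n(B;\Z)$ with high-dimensional surgery techniques (Haefliger's embedding theorem and the Whitney trick, both valid since $2n>4$).

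\emph{Upper bound.} Given pairwise disjoint admissible embeddings $j_1, \dots, j_k : S^n \times S^n \setminus \{*\} \hookrightarrow W$, push the two standard $n$-cycles of each $S^n \times S^n \setminus \{*\}$ into $H_n(W;\Z)$. Geometric disjointness yields orthogonality between different copies, and within each copy the intersection numbers are $\pm 1$ off-diagonal and $0$ on-diagonal, so the $2k$ resulting classes span $k$ pairwise orthogonal hyperbolic summands in the intersection lattice. Hence $g(W,\str_W) \leq g^a(W)$.

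\emph{Lower bound.} Write $g := g^a(W)$ and $K := \ker(\ell_*)$. Since $H_n(B;\Z)$ is generated by $e$ elements, $K$ has corank $\leq e$ in $H_n(W;\Z)$. The plan has three stages.

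\textbf{Algebraic stage.} Produce $g - c$ pairwise orthogonal hyperbolic summands inside $K$ by elementary lattice arithmetic applied to the restriction of the intersection form to $K$. The symmetric case ($n$ even) and the skew-symmetric case ($n$ odd) behave slightly differently, and this asymmetry is where the distinction between $c = 1+e$ and $c = e$ first appears.

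\textbf{Geometric stage.} For each hyperbolic pair, represent the two generators by embedded $n$-spheres $\alpha, \beta \subset W$ intersecting transversely in a single point, with distinct pairs pairwise disjoint. The $n$-connectedness of $\ell_W$, together with simple-connectivity of $B$, allows one (via the relative Hurewicz theorem applied to $\hofib(\ell_W)$) to represent each class in $K$ by a map $S^n \to W$ whose composite to $B$ is null-homotopic. Haefliger's embedding theorem promotes these to embeddings, and the Whitney trick arranges the intersection pattern.

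\textbf{Neighbourhood and admissibility stage.} The regular neighbourhood of $\alpha \cup \beta$ is the plumbing of the normal $\R^n$-bundles of $\alpha$ and $\beta$. Vanishing self-intersection makes their Euler classes vanish; for $n$ even, or for $n \in \{3,7\}$, this already forces the bundles to be trivial (via parallelisability of $S^n$ in the exceptional $n$, and via the Euler class being the full primary obstruction in the even $n$ case), so the plumbing is diffeomorphic to $S^n \times S^n \setminus \{*\}$. In the remaining cases an extra framing obstruction in $\pi_n(\SO)$ must be absorbed, which is the origin of the $+1$ in $c = 1+e$: one extra generator of the framing group may fail to be cancelled using classes of $K$, forcing one pair to be discarded. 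The induced $\Theta$-structure on the plumbing is admissible in all cases because $\ell_W \circ \alpha$ and $\ell_W \circ \beta$ are null-homotopic by construction.

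\emph{Main obstacle.} The principal difficulty lies in the last stage: pinning down the exact admissibility obstruction and tracking its interaction with $H_n(B;\Z)$ via the Hurewicz map. This is what determines whether $c = e$ or $c = 1+e$, and it is the step where simple-connectivity of $B$ enters most decisively, through surjectivity of $\pi_n(B) \to H_n(B;\Z)$ which lets the $e$ generators of the homology absorb all but (at most) one piece of the framing obstruction.
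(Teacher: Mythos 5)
The survey does not prove this theorem: it is stated with a citation to \cite[Remark 7.16]{GR-W3} and no argument is given here. So there is no ``paper's proof'' to compare against, and I can only assess your outline on its own terms.

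Your broad strategy (upper bound from disjointness and the hyperbolic form; lower bound by finding hyperbolic classes killed by $\ell_*$, embedding them using Whitney and Haefliger, and analysing normal bundles and admissibility) is the right kind of argument, and the use of $\hofib(\ell_W)$ to show that the kernel $K=\ker(\ell_*:H_n(W;\Z)\to H_n(B;\Z))$ consists of spherical classes with null-homotopic composite to $B$ is correct and important: $\hofib(\ell_W)$ is $(n-1)$-connected, so $\pi_n(\hofib(\ell_W))\cong H_n(\hofib(\ell_W))$, and the Serre spectral sequence then identifies $K$ with a quotient of $H_n(\hofib(\ell_W))$.

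There are two genuine problems. First, the final ``main obstacle'' paragraph rests on the claim that simple-connectivity of $B$ gives surjectivity of the Hurewicz map $\pi_n(B)\to H_n(B;\Z)$. This is false for $n>2$ without higher connectivity of $B$: for example $B=\C P^\infty$ has $\pi_4=0$ but $H_4\cong\Z$. The theorem assumes only that $B$ is simply connected, not $(n-1)$-connected, so this Hurewicz surjectivity is not available, and the proposed mechanism by which the $e$ generators of $H_n(B;\Z)$ ``absorb'' framing obstructions cannot work as described. Second, and relatedly, the role of $e$ and of the ``$+1$'' are conflated: $e$ bounds the corank of $K$ and the ``$+1$'' (for $n$ odd, $n\notin\{3,7\}$) comes from the obstruction to destabilising the normal bundle of an embedded $n$-sphere, living in $\ker\bigl(\pi_{n-1}(\SO(n))\to\pi_{n-1}(\SO)\bigr)$, which is the cyclic group generated by $[TS^n]$ (so $\cong\Z/2$ exactly when $n$ is odd and $n\notin\{1,3,7\}$), not $\pi_n(\SO)$ as you write. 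You also first assert the plumbing is ``admissible in all cases'', then say pinning down admissibility is the principal difficulty; these cannot both be right, and the tension reflects that the framing obstruction has not been precisely located. Even the ``algebraic stage'' needs justification beyond ``elementary lattice arithmetic'': the restriction of a unimodular $(-1)^n$-symmetric form to a corank-$\le e$ sublattice need not be unimodular, and one has to argue carefully that it still carries at least $g^a(W)-e$ orthogonal hyperbolic summands; for $n$ even this must also account for signature.
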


\begin{remark}
  Let us briefly point out that the estimate~(\ref{eq:15}) may be
  expressed using characteristic numbers. Indeed, writing $b_i = b_i(B) = b_i(W)$ for $i = 1, \dots, n-1$, we have
  \begin{equation}\label{eq:18}
    g^a(W) = (-1)^n\big(\chi(W)/2 - \sum_{i = 0}^{n-1} (-1)^i b_i\big) - |\sigma(W)|/2,
  \end{equation}
  where $\chi(W) = \int_W e(TW)$ is the Euler characteristic and
  $\sigma(W) = \int_W \mathcal{L}(TW)$ is the signature (where we
  write $\sigma(W) = 0$ when $n$ is odd).
\end{remark}

\section{General versions of main results}
\label{sec:gener-vers-main}

For some purposes, the rational cohomology statement in
Theorem~\ref{thm:main-cohomological} suffices, but there are several homotopy theoretic strengthenings and variations given in \cite{GR-W4}, which we now explain. 

\subsection{Stable homotopy enhancement}

To state a more robust version of Theorem~\ref{thm:main-cohomological}
above, we must first introduce a space $\Omega^\infty MT\Theta$
associated to the $\GL_d(\R)$-space $\Theta$. The map $B = \Theta\moddd \GL_d(\R) \to B\GL_d(\R)$ classifies a
$d$-dimensional real vector bundle over $B$, and we shall write
$MT\Theta$ for the Thom spectrum of its virtual inverse and
$\Omega^\infty MT\Theta$ for the corresponding infinite loop
space.  The following result is a restatement of \cite[Corollary 1.7]{GR-W4}.

\begin{theorem}\label{thm:homotopical}
  Let $d = 2n > 4$, $W$ be a closed simply-connected $d$-manifold, and $\str_W: \Fr(W) \to \Theta$ be an
  $n$-connected equivariant map. Then there is a map
  \begin{equation}\label{eq:5}
    \alpha : \mathcal{M}^\Theta(W,\str_W) \lra \Omega^\infty MT\Theta,
  \end{equation}
  inducing an isomorphism in integral homology onto the path component that it hits, in degrees $\leq (g(W, \str_W) - 4)/3$.

  If in addition $\Theta$ is spherical, then~(\ref{eq:5}) induces an
  isomorphism in homology in degrees $\leq (g(W, \str_W)-3)/2$.
\end{theorem}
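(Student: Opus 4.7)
The plan is to construct $\alpha$ via a parametrised Pontryagin--Thom construction and then combine two independent streams of results: a homological stability theorem controlling how $\cM^\Theta(W,\str_W)$ depends on the genus, and a ``stable'' identification of the limiting homology with that of $\Omega^\infty MT\Theta$. First I would build $\alpha$ explicitly from the universal diagram~(\ref{eq:25}). Choose an embedding of the universal fibration $\pi^\Theta : \cE^\Theta \hookrightarrow \cM^\Theta \times \R^\infty$ over $\cM^\Theta$; the normal bundle is a universal instance of $-T_\pi \cE^\Theta$ equipped with a $\Theta$-structure, so collapsing the exterior of a tubular neighbourhood fibrewise and adjointing gives the Madsen--Tillmann type map $\alpha : \cM^\Theta(W,\str_W) \to \Omega^\infty MT\Theta$. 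Functoriality of this construction means it is enough to compare source and target one path component at a time.

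The core of the proof is then homological stability. Introduce the version of the moduli space with boundary, $\cM^\Theta(W,\str_W; L)$, where the $\Theta$-structure is fixed on a boundary collar. Stabilisation by gluing on a copy of $([0,1]\times\partial W) \cup (S^n\times S^n \setminus \mathrm{int}(D^{2n}))$ with admissible $\Theta$-structure yields a stabilisation map, and I would show that this map induces an isomorphism on $H_k$ once $k \leq (g-c)/2$ (or $/3$ in the non-spherical case). This is the hard part: it is proved by constructing a semi-simplicial resolution of $\cM^\Theta(W,\str_W;L)$ whose $p$-simplices parametrise systems of $p+1$ disjointly embedded admissible $(S^n \times S^n \setminus \{*\})$'s, verifying that its geometric realisation is highly connected (a delicate argument combining surgery theory on $\Theta$-manifolds with Hatcher--Vogtmann--Wahl-type connectivity estimates for complexes of embedded submanifolds), and then running the standard spectral sequence comparison. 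The improved slope in the spherical case comes from replacing the resolution by one built out of admissible $S^n \times D^n$'s, whose complex has better connectivity precisely because sphericality of $\Theta$ makes every suitable embedding admissible.

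Granted stability, it suffices to identify the colimit $\cM_\infty^\Theta := \hocolim_g \cM^\Theta(W_g, \str_g; L)$ with (a component of) $\Omega^\infty MT\Theta$ through $\alpha$. For this I would interpret $\cM^\Theta(W,\str_W;L)$ as the space of morphisms in a $\Theta$-structured cobordism category $\CC^\Theta$ between the fixed boundaries, express $B\CC^\Theta$ via a scanning/Pontryagin--Thom argument as $\Omega^{\infty-1} MT\Theta$, and apply the group completion theorem to the monoid-like action of ``gluing on handles'' to identify its classifying space with $\Omega B(\cM^\Theta_\infty \times \N)$; comparing the two identifications shows $\alpha$ induces an acyclic map on the stable moduli space. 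The last step is routine homological algebra once the two main inputs are in place; combining with the stability range gives the stated slopes $(g-4)/3$ and $(g-3)/2$.

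The genuine obstacle is the high-connectivity estimate for the semi-simplicial resolution, since the admissibility condition on embedded $S^n \times S^n \setminus \{*\}$'s forces one to track the effect of surgery on the $\Theta$-structure (equivalently on $\ell_W : W \to B$), and this is where the hypothesis that $\str_W$ is $n$-connected, together with the simple-connectivity of $W$ and $B$, enters crucially via a Whitney-trick style cancellation of intersection points.
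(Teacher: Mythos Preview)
The paper is a survey and does not itself prove this theorem; it is stated as a restatement of \cite[Corollary~1.7]{GR-W4}, with the construction of $\alpha$ described in \S\ref{sec:mmm-class-gener} and only brief hints at proof strategy (e.g.\ the remark at the end of \S\ref{sec:boundary}).  Your outline is faithful to the actual architecture of the proofs in \cite{GR-W2, GR-W3, GR-W4}: parametrised Pontryagin--Thom for $\alpha$, homological stability via highly-connected semi-simplicial resolutions (\cite{GR-W3}), and identification of the stable limit via cobordism categories and the theorem of \cite{GMTW} (\cite{GR-W2, GR-W4}).

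Two places where your sketch oversimplifies.  First, the ``stable'' identification in \cite{GR-W2, GR-W4} is not a direct application of the group completion theorem to a monoid as you describe; the argument is a chain of weak equivalences of semi-simplicial spaces involving surgery on objects and morphisms in the cobordism category to reduce to subcategories whose morphisms are highly connected, and a group-completion-type input enters only after this reduction.  Second, the homological stability of \cite{GR-W3} is not a single resolution by embedded copies of $S^n \times S^n \setminus \{*\}$: there is a long handle-induction (cf.\ the remark at the end of \S\ref{sec:boundary}) reducing general $(W,\str_W)$ to the model manifolds $W_{g,1}$, and the resolutions used are built from cores and cocores of handles rather than the punctured $S^n \times S^n$ directly.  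Your identification of where simple-connectivity and $n$-connectivity of $\str_W$ enter (Whitney trick, controlling the $\Theta$-structure under surgery) is correct in spirit.
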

The statement proved in \cite{GR-W4} is in fact stronger: the map $\alpha$ induces an isomorphism in homology with local coefficients in degrees up to $(g(W, \str_W) - 4)/3$.  We say that the map is \emph{acyclic} in this range of degrees.  (The induced map in
$\pi_1$ is of course far from an isomorphism.)

\begin{remark}\label{rem:RatCalc}
  Let us also briefly recall why
  Theorem~\ref{thm:main-cohomological} is a consequence of
  Theorem~\ref{thm:homotopical}: under the Thom isomorphism $H^{k+d}(B;\bQ^{w_1}) \cong H^k(MT\Theta;\bQ)$ each class $c \in H^{k+d}(B;\bQ^{w_1})$ may be
  represented by a spectrum map $MT\Theta \to \Sigma^k H\bQ$.  If we
  choose a rational basis
  $\mathcal{B}_k \subset H^{k+d}(B;\bQ^{w_1}) \cong H^k(MT\Theta;\bQ)$ and represent each basis element by a spectrum map $MT\Theta \to \Sigma^k H\bQ$, we obtain
  \begin{equation*}
    MT\Theta \lra \prod_{k = 1}^\infty \prod_{c \in \mathcal{B}_k} \Sigma^k H\bQ
  \end{equation*}
  which induces isomorphisms in rational homology and hence in
  rationalised homotopy in positive degrees, at least if each $\mathcal{B}_k$ is finite in which case the product in the codomain may be replaced by the wedge.
  It follows that the induced map of infinite loop spaces
  \begin{equation*}
    \Omega^\infty MT\Theta \lra \prod_{k = 1}^\infty \prod_{\mathcal{B}_k} K(\bQ,k)
  \end{equation*}
  induces an equivalence in rationalised homotopy groups in positive
  degrees, hence in rational cohomology when restricted to any path
  component of its domain.
\end{remark}

\subsection{MMM-classes in generalised cohomology}\label{sec:mmm-class-gener}

There is a preferred map~(\ref{eq:5}) in Theorem
\ref{thm:homotopical}. Following \cite{MT}, in \cite[Remark 1.11]{GR-W2} we gave an
explicit point-set model.  Here we shall explain the map in a conceptual but somewhat
informal way.  It is obtained from the following two ingredients.
\begin{enumerate}[(i)]
\item A smooth $d$-manifold $W$ and an equivariant map
  $\str_W: \Fr(TW) \to \Theta$ induces a continuous map
$$\ell_W = \str_W\moddd\GL_d(\R): W \simeq \Fr(TW)\moddd\GL_d(\R) \lra B = \Theta
  \moddd \GL_d(\R)$$
under which the canonical bundle $\gamma$ on
  $B\GL_d(\R) = \ast\moddd\GL_d(\R)$ is pulled back to $TW$.  By
  passing to Thom spectra of inverse bundles one gets a map
  \begin{equation}\label{eq:9}
    W^{-TW} \lra B^{-\gamma} = MT\Theta,
  \end{equation}
  in the stable homotopy category.
\item If $W$ is a closed manifold there is a canonical map
  \begin{equation}\label{eq:8}
    S^0 \lra W^{-TW}
  \end{equation}
  which is Spanier--Whitehead dual\index{duality!Spanier--Whitehead} to the canonical map $W \to \{\ast\}$ under Atiyah duality $D(W_+) \simeq W^{-TW}$. The actual map of spectra depends on certain choices, but in the right
  setup these choices form a contractible space.  (For example, one may choose
  an embedding $W \hookrightarrow \R^\infty$ and get the map~(\ref{eq:8}) by
  the Pontryagin--Thom\index{Pontryagin--Thom construction} collapse construction.)
\end{enumerate}
If $W$ is a smooth closed $d$-manifold and $\str_W: \Fr(TW) \to \Theta$
is an equivariant map, we may compose~(\ref{eq:8}) and~(\ref{eq:9}) to
get a map of spectra $S^0 \to MT\Theta$, i.e.\ a point
in $\Omega^\infty MT\Theta$.  We shall write $\alpha(W,\str_W) \in \Omega^\infty MT\Theta$ for this point, and write
\begin{equation*}
  \Omega^\infty_{[W,\str_W]} MT\Theta \subset \Omega^\infty MT\Theta
\end{equation*}
for the path component containing $\alpha(W,\str_W)$.  This is the path component that the map~(\ref{eq:5}) lands in.

The map~(\ref{eq:5}) is given by a parametrised version of this
construction: given a family
$(\pi: E \to X, \str: \Fr(T_\pi E) \to \Theta)$ as in
Definition~\ref{def:family} over some base manifold $X$, it associates
a continuous map $\alpha: X \to \Omega^\infty MT\Theta$.  Said differently, it
comes from a composition of spectrum maps, the parametrised analogues
of~(\ref{eq:8}) and~(\ref{eq:9}) respectively,
\begin{equation}
  \label{eq:16}
  \begin{aligned}
    \Sigma^\infty_+ X & \lra E^{-T_\pi E}\\
    E^{-T_\pi E} &\lra MT\Theta.
  \end{aligned}
\end{equation}
In any case,~(\ref{eq:5}) is a universal version of this construction.

\begin{remark}
  Applying spectrum homology and the Thom isomorphism to the first map in~(\ref{eq:16}), we get precisely the fibre integration homomorphism~(\ref{eq:17}), while the second gives~(\ref{eq:20}).  This explains the connection to the characteristic classes in \S\ref{sec:char-class-1}.
\end{remark}

\begin{remark}\label{remark:disconnected}
  There is an improvement to Theorem~\ref{thm:homotopical}, in which the domain of
  (\ref{eq:5}) is replaced by a disconnected space
  $\mathcal{M}^\Theta_n$ containing $\mathcal{M}^\Theta(W,\str_W)$ as
  one of its path components, cf.\ \S\ref{sec:two-fiber-sequences} below and \cite[Section
  8]{GR-W4}.  In this improved formulation, the number
  $g(W,\str_W) \in \bN$ appearing in Theorem~\ref{thm:homotopical} is replaced by a function
  \begin{equation*}
    \pi_0(\mathcal{M}^\Theta_n) \overset{\alpha_*}\lra \pi_0(MT\Theta) \overset{\bar{g}^\Theta}\lra \bZ,
  \end{equation*}
  whose value on the path component $\mathcal{M}^\Theta(W,\str_W)$ is
  the \emph{stable genus}, defined in \cite[Section 5]{GR-W3} and
  \cite[Section 1.3]{GR-W4}.  It is at least $g(W,\str_W)$.

  If we write $\chi: \pi_0(MT\Theta) \to \Z$ and
  $\sigma: \pi_0(MT\Theta) \to \Z$ be the homomorphisms arising from
  the Euler class and (for even $n$) the Hirzebruch class,
  then~(\ref{eq:18}) defines a function
  $g^a: \pi_0(MT\Theta) \to \bN$.  In terms of this function, the
  estimate~(\ref{eq:15}) also holds for $\bar{g}^\Theta$.
\end{remark}

\subsection{General tangential structures}

The requirement in Theorems \ref{thm:main-cohomological} and \ref{thm:homotopical} that $\str_W: \Fr(TW) \to \Theta$ be $n$-connected
appears quite restrictive at first sight.  For example, it usually
rules out the interesting special cases $\Theta = \{\ast\}$ and
$\Theta = \{\pm 1\}$ from Remark~\ref{remark:special-cases}, so that
the cohomology of $B\Diff(W)$ and $B\Diff^\orientationpreserving(W)$ are not immediately calculated by Theorem~\ref{thm:homotopical}.

A more generally useful version of Theorem~\ref{thm:homotopical},
which holds without the connectivity assumption, may be deduced by a
rather formal homotopy theoretic trick, based on the observation that
the map~(\ref{eq:5}) is functorial in the $\GL_d(\R)$-space $\Theta$.

In particular, any map $\Theta \to \Theta$ induces a self-map of
$\Omega^\infty MT\Theta$.  The following result is \cite[Corollary
1.9]{GR-W4}.

\begin{theorem}\label{thm:general-str}
  For $d = 2n > 4$ and $\Lambda$ a $\GL_d(\R)$-space, let $W$ be a
  closed simply-connected smooth $d$-dimensional manifold, and
  $\lambda_W: \Fr(TW) \to \Lambda$ be an equivariant map. Choose an equivariant Moore--Postnikov\index{Moore--Postnikov stage} $n$-stage
\begin{equation}\label{eq:MPFactorisation}
\lambda_W : \Fr(TW) \overset{\str_W}\lra \Theta \overset{u}\lra \Lambda,
\end{equation}
i.e.\ a factorisation where $u$ is  $n$-co-connected equivariant fibration and $\str_W$ is an $n$-connected equivariant cofibration, and write $\hAut(u)$ for the group-like topological monoid consisting of equivariant weak equivalences $\Theta \to \Theta$ over $\Lambda$.

  This topological monoid acts on $\Omega^\infty MT\Theta$, and there
  is a continuous map
  \begin{equation}\label{eq:11}
    \alpha: \mathcal{M}^\Lambda(W,\lambda_M) \lra
    (\Omega^\infty MT\Theta)\moddd \hAut(u).
  \end{equation}
  which, when regarded as a map onto the path component that it hits,
  induces an isomorphism in homology with local coefficients in
  degrees $\leq (g(W,\lambda_W)-4)/3$, and if $\Theta$ is spherical it
  induces an isomorphism in integral homology in degrees
  $\leq (g(W,\lambda_W) - 3)/2$.
\end{theorem}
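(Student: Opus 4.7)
The plan is to reduce to Theorem~\ref{thm:homotopical} applied to $(W,\rho_W)$---which is legitimate because $\rho_W$ is $n$-connected by construction~(\ref{eq:MPFactorisation})---and then use functoriality under $u$ to pass from the $\Theta$-moduli to the $\Lambda$-moduli, absorbing the residual symmetries into the $\hAut(u)$-homotopy orbit on the right-hand side of~(\ref{eq:11}). The construction of the map~(\ref{eq:5}) in Theorem~\ref{thm:homotopical} is natural in the $\GL_d(\R)$-space $\Theta$, as noted in the introduction to \S\ref{sec:gener-vers-main}, so $\hAut(u)$ acts equivariantly on the map $\alpha : \mathcal{M}^\Theta \to \Omega^\infty MT\Theta$; this equivariance is what makes~(\ref{eq:11}) well-defined.

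The first substantive step is to identify $\mathcal{M}^\Lambda(W,\lambda_W)$ as an $\hAut(u)$-homotopy orbit. Consider the forgetful functor $u_* : \mathcal{M}^\Theta \to \mathcal{M}^\Lambda$ obtained by postcomposing structures with $u$. In the simplicial model of Definition~\ref{defn:represented-functor}, its fibre over a $p$-simplex $(\pi,\lambda)$ of $\class^\Lambda_\bullet$ is the simplicial set of equivariant factorisations of $\lambda : \Fr(T_\pi E) \to \Lambda$ through $u$. Because $u$ is an $n$-co-connected equivariant fibration and $\rho_W$ is an $n$-connected equivariant cofibration, Moore--Postnikov obstruction theory in families yields both the existence of such factorisations and the fact that, up to equivariant homotopy over $\Lambda$, the space of factorisations is naturally an $\hAut(u)$-torsor. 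Combining these I expect to obtain an equivalence
\begin{equation*}
\mathcal{M}^\Lambda(W,\lambda_W) \simeq \Big(\bigsqcup_{[\sigma]} \mathcal{M}^\Theta(W,\sigma)\Big) \moddd \hAut(u),
\end{equation*}
where $[\sigma]$ ranges over the $\hAut(u)$-orbit of $[\rho_W]$ in the set of concordance classes of $\Theta$-lifts of $\lambda_W$.

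For each representative $\sigma$, the structure $\sigma$ is itself $n$-connected since it differs from $\rho_W$ by postcomposition with an equivariant weak equivalence of $\Theta$. Moreover $g(W,\sigma) = g(W,\lambda_W)$: a $\sigma$-admissible embedding is $\lambda_W$-admissible by postcomposing with $u$, and conversely a $\lambda_W$-admissible embedding can be lifted to a $\sigma$-admissible one by lifting the $\Lambda$-structure over the contractible $2n$-disk witnessing admissibility, using the $n$-co-connectedness of $u$. Theorem~\ref{thm:homotopical} therefore supplies acyclic maps $\mathcal{M}^\Theta(W,\sigma) \to \Omega^\infty_{\alpha(W,\sigma)} MT\Theta$ with local coefficients in degrees $\leq (g(W,\lambda_W)-4)/3$, improved to $\leq (g(W,\lambda_W)-3)/2$ when $\Theta$ is spherical. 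Disjoint unions of acyclic maps are acyclic, and so is the passage to $\hAut(u)$-homotopy orbits, by a Serre spectral sequence comparison over $B\hAut(u)$; combined with the identification of the previous paragraph this yields the required acyclicity of~(\ref{eq:11}).

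The main obstacle I expect is the displayed equivalence above: making precise the sense in which the homotopy fibres of $u_* : \mathcal{M}^\Theta \to \mathcal{M}^\Lambda$ are $\hAut(u)$-torsors requires a careful treatment of Moore--Postnikov lifting in parametrised frame bundles, as well as an honest bookkeeping of which path components of $\mathcal{M}^\Theta$ appear in the orbit of $[\rho_W]$ and how $\hAut(u)$ permutes them.
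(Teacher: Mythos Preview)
Your proposal is correct and follows essentially the same route as the paper's own argument in \S\ref{sec:two-fiber-sequences}: identify the $\Lambda$-moduli space as an $\hAut(u)$-homotopy orbit of $\Theta$-moduli spaces, apply Theorem~\ref{thm:homotopical} fibrewise, and conclude by a Serre spectral sequence comparison over $B\hAut(u)$. The paper packages the first step slightly differently, working with the full disconnected spaces $\mathcal{M}^\Theta_n$ and $\mathcal{M}^\Lambda_u$ and proving $(\mathcal{M}^\Theta_n)\moddd\hAut(u)\simeq\mathcal{M}^\Lambda_u$ as a single lemma (your displayed equivalence is then obtained by restricting to one path component of $\mathcal{M}^\Lambda_u$), and correspondingly appeals to the disconnected strengthening of Theorem~\ref{thm:homotopical} noted in Remark~\ref{remark:disconnected} rather than invoking it component-by-component as you do; but this is organisational rather than substantive, and your verification that $g(W,\sigma)=g(W,\lambda_W)$ for each $\sigma$ in the orbit supplies exactly the uniformity of range needed for your version to go through.
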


We emphasise that the homotopy orbit space is formed in the category
of spaces, not of infinite loop spaces (there is a comparison map
$(\Omega^\infty MT\Theta)\moddd \hAut(u) \to
\Omega^\infty((MT\Theta) \moddd \hAut(u))$ but it is not a weak
equivalence and we shall not need its codomain). Equivariant factorisations \eqref{eq:MPFactorisation} always exist, and are unique up to contractible choice.

Let us also point out that the group $\pi_0(\hAut(u))$ likely
acts non-trivially on $\pi_0(MT\Theta)$.  The construction of the
map~(\ref{eq:11}), outlined in \S\ref{sec:two-fiber-sequences}
below, together with the orbit-stabiliser theorem, lets us re-write
the relevant path component of the codomain of~(\ref{eq:11}) in the
following way, which is how the theorem above is typically used in
practice.

\begin{corollary}\label{cor:general-str}
  Let $d$, $\Lambda$, $W$, $\lambda_W$, $\Theta$, $\str_W$, and $u$ be as
  in Theorem~\ref{thm:general-str}, and write
  \begin{equation*}
    \hAut(u)_{[W,\str_W]} \subset \hAut(u)
  \end{equation*}
  for the submonoid stabilising the element
  $[W,\str_W] \in \pi_0(MT\Theta)$ defined in
  \S\ref{sec:mmm-class-gener}.  The action of this submonoid on
  $\Omega^\infty MT\Theta$ restricts to an action on the path
  component $\Omega^\infty_{[W,\str_W]} MT\Theta$ defined in
  \S\ref{sec:mmm-class-gener}, and~(\ref{eq:11}) factors through
  a map of path connected spaces
  \begin{equation*}
   \alpha:  \mathcal{M}^\Lambda(W,\lambda_W) \lra (\Omega^\infty_{[W,\str_W]} MT\Theta)\moddd(\hAut(u)_{[W,\str_W]})
  \end{equation*}
  which induces an isomorphism on homology in a range, as in
  Theorem~\ref{thm:general-str}. \qed
\end{corollary}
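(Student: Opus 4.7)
The plan is to deduce Corollary \ref{cor:general-str} from Theorem \ref{thm:general-str} by a formal manipulation of the homotopy orbit space $(\Omega^\infty MT\Theta) \moddd \hAut(u)$, using that the source $\mathcal{M}^\Lambda(W,\lambda_W)$ is path connected. The map $\alpha$ of~(\ref{eq:11}) therefore lands in a single path component of its codomain, and the task reduces to (a) identifying that path component, and (b) transferring the homology conclusion of Theorem \ref{thm:general-str} to the restricted map.

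For (a), unwinding the construction of $\alpha$ from \S\ref{sec:mmm-class-gener} shows that the basepoint $[W,\lambda_W] \in \mathcal{M}^\Lambda(W,\lambda_W)$ is sent, via the Moore--Postnikov lift $\lambda_W = u\circ\str_W$, to the class of the point $\alpha(W,\str_W) \in \Omega^\infty_{[W,\str_W]}MT\Theta$ in the Borel construction. The target path component is therefore the one indexed by the $\pi_0(\hAut(u))$-orbit of $[W,\str_W]$. The key input is the homotopy orbit--stabiliser statement: for a group-like topological monoid $M$ acting on a space $Y$, a point $y \in Y$ with path component $Y_0$, and the submonoid $M_{[y]} \subset M$ of path components fixing $[y] \in \pi_0(Y)$, the natural map
\[
Y_0 \moddd M_{[y]} \lra Y \moddd M
\]
is a weak equivalence onto the union of path components in the $\pi_0(M)$-orbit of $[y]$. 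I would verify this using the bar-construction model of each side together with the usual orbit--stabiliser decomposition $\pi_0(M)\cdot [y] \cong \pi_0(M)/\pi_0(M_{[y]})$, which is available because $\pi_0(M)$ is a group. Specialising to $M = \hAut(u)$ and $y = \alpha(W,\str_W)$ identifies the relevant target component as $(\Omega^\infty_{[W,\str_W]}MT\Theta) \moddd \hAut(u)_{[W,\str_W]}$, yielding the factorisation required by the corollary.

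Part (b) is then immediate: the resulting map is the corestriction of the map in Theorem \ref{thm:general-str} to a union of path components of its target, so the acyclicity (respectively integral homology isomorphism in the spherical case) in the advertised range automatically carries over. The only substantive obstacle is the orbit--stabiliser step of (a), which although standard requires some care because the stabiliser is a submonoid cut out by path components rather than a subgroup; it is precisely the group-likeness hypothesis on $\hAut(u)$ that makes this innocuous.
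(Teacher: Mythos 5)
Your proposal is correct and follows essentially the same route as the paper, which treats the corollary as an immediate consequence of Theorem~\ref{thm:general-str} together with the orbit--stabiliser principle for homotopy orbits by a group-like monoid (the paper itself only records this by citing "the orbit-stabiliser theorem" and stamps the corollary with \(\square\)). Your more explicit articulation of the orbit--stabiliser step via the bar construction, and your observation that group-likeness of $\hAut(u)$ is what makes the submonoid stabiliser behave like a subgroup, is exactly the intended justification.
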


\begin{remark}
  In both Theorem~\ref{thm:general-str} and Corollary~\ref{cor:general-str} above, the range is expressed in terms of $g(W,\lambda_W)$ but as explained in \cite[Lemma 9.4]{GR-W4} this is equal to $g(W,\str_W)$ when $\lambda_W$ is factored equivariantly as an $n$-connected map $\str_W: \Fr(TW) \to \Theta$ followed by an $n$-co-connected map $u: \Theta \to \Lambda$.  Hence the estimates in \S\ref{sec:disc-moduli-spac} apply, when $e$ is the minimal number of generators for the abelian group $H_n(\Theta \moddd \GL_{2n}(\R))$.  The value of $e$ likely depends on the map $\lambda_W: \Fr(TW) \to \Lambda$, even if $W$ and $\Lambda$ are fixed.
\end{remark}

\begin{remark}\label{rem:finite-calculation}
  The fact that $u$ is $n$-co-connected implies that
  $\hAut(u)$ is an $(n-1)$-type.  Hence it is in some sense a finite problem to
  determine and describe $\hAut(u)$: finitely many homotopy
  groups $\pi_0, \dots, \pi_{n-1}$ and finitely many $k$-invariants.

  This finiteness is one of the conceptual advantages of our approach
  to $\cM(W) \simeq B\Diff(W)$, over the more classical method which
  at first gives a formula for the \emph{structure space}
  $\mathcal{S}(W) \simeq G(W) / \widetilde{\Diff}(W)$, where $G(W)$ is the monoid of homotopy equivalences from $W$ to itself and
  $\widetilde{\Diff}(W)$ is the block diffeomorphism group.\index{diffeomorphism group!block}  In that method, one subsequently
  has to study the difference between $\Diff(W)$\index{diffeomorphism group} and
  $\widetilde{\Diff(W)}$, but also has to take homotopy orbits by the
  monoid $G(W)$ of homotopy automorphisms of $W$.  While this last
  step is in some sense ``purely homotopy theory'', it is in practice
  very difficult to get a good handle on $G(W)$, even when $W$ is
  relatively simple and even when one is working up to rational
  equivalence.  See the work of Berglund and Madsen \cite{BerglundMadsen, BerglundMadsenII} for a recent example.  
\end{remark}

\subsection{Two fibre sequences}
\label{sec:two-fiber-sequences}

In \cite[Section 9]{GR-W4}, Theorem~\ref{thm:general-str} is deduced
from the special case given in Theorem~\ref{thm:homotopical} by a rather formal
argument: the homology equivalence in Theorem~\ref{thm:homotopical} is natural
in the $\GL_{2n}(\R)$-space $\Theta$, and hence induces a homology
equivalence by taking homotopy colimit over any diagram in
$\GL_{2n}(\R)$-spaces; in particular one may form homotopy orbits by
$\hAut(u: \Theta \to \Lambda)$, and
Theorem~\ref{thm:general-str} is deduced by identifying the resulting
map of homotopy orbit spaces with~(\ref{eq:11}).  The two fibre sequences
arising from these homotopy orbit constructions, see diagram~(\ref{eq:21}) below, are important for carrying out calculations in concrete examples, and hence we recall
this story in slightly more detail.

\begin{definition}
  Let $u: \Theta \to \Lambda$ be an equivariant $n$-co-connected
  fibration.
  \begin{enumerate}[(i)]
  \item Let $\mathcal{M}^\Theta_n \subset \mathcal{M}^\Theta$ be the
    union of those path components $\mathcal{M}^\Theta(W,\str_W)$ for
    which $\str_W: \Fr(TW) \to \Theta$ is $n$-connected.
  \item Let $\mathcal{M}^\Lambda_u \subset \mathcal{M}^\Lambda$ be the
    union of those path components $\mathcal{M}^\Lambda(W,\lambda_W)$ for which $\lambda_W$
    \emph{admits} a factorisation through an $n$-connected equivariant
    map $\str_W : \Fr(TW) \to \Theta$.
  \end{enumerate}
\end{definition}
There is an obvious forgetful map
$\mathcal{M}^\Theta_n \to \mathcal{M}^\Lambda_u$ given by composing
$\str_W : \Fr(TW) \to \Theta$ with $u$. The monoid $\hAut(u)$ has the correct homotopy type when $\Theta$ is equivariantly cofibrant, which we shall assume. It
acts on $\mathcal{M}^\Theta_n$ by postcomposing
$\str_W : \Fr(TW) \to \Theta$ with self-maps of $\Theta$.  This action
commutes with the forgetful map, and induces a map
\begin{equation}\label{eq:13}
  (\mathcal{M}^\Theta_n)\moddd \hAut(u) \lra
  \mathcal{M}^\Lambda_u.
\end{equation}
The following lemma is proved by elementary homotopy theoretic
methods, cf.\ \cite[Section 9]{GR-W4}.
\begin{lemma}
  The map~(\ref{eq:13}) is a weak equivalence.  Hence there is an
  induced fibre sequence of the form
  \begin{equation*}
    \mathcal{M}^\Theta_n \lra \mathcal{M}^\Lambda_u \lra B
    (\hAut(u)).
  \end{equation*}
\end{lemma}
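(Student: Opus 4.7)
My plan is to establish the weak equivalence~(\ref{eq:13}) by a fibrewise analysis of the forgetful map $p : \mathcal{M}^\Theta_n \to \mathcal{M}^\Lambda_u$; the fibre sequence then follows from the standard fibration $X \to X \moddd G \to BG$ applied to the $\hAut(u)$-action on $X = \mathcal{M}^\Theta_n$.

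Using the Borel-construction model of Remark~\ref{remark:special-cases}, or equivalently the universal bundle~(\ref{eq:25}), the homotopy fibre of $p$ over a point $[W,\lambda_W]$ is identified with the space $L(\lambda_W)$ of equivariant $n$-connected lifts $\str : \Fr(TW) \to \Theta$ of $\lambda_W$ along $u$. By the definition of $\mathcal{M}^\Lambda_u$ this space is non-empty; fix a basepoint $\str_W$. The monoid $\hAut(u)$ acts on $L(\lambda_W)$ by postcomposition, and the crux of the argument is to show that the orbit map $\hAut(u) \to L(\lambda_W)$, $f \mapsto f \circ \str_W$, is a weak equivalence.

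Both $(\Theta,\str_W)$ and $(\Theta,\str')$, for any further $n$-connected lift $\str'$, are equivariant Moore--Postnikov $n$-stages of $\lambda_W$, and the desired equivalence is the uniqueness up to contractible choice of such factorisations. Concretely, to produce $f \in \hAut(u)$ with $f \circ \str_W \simeq \str'$ I would run equivariant obstruction theory on the square with vertical maps $\str_W$ and $u$ and horizontal maps $\str'$ and $\lambda_W$: cells of $\Theta$ relative to $\Fr(TW)$ sit in dimensions $> n$ while the homotopy fibres of $u$ are $(n-1)$-types, so every obstruction vanishes and $f$ is determined up to contractible choice. A five-lemma argument on the long exact sequences of equivariant homotopy groups over $\Lambda$ then shows that $f$ is automatically a weak equivalence, and the same argument applied one dimension higher shows the stabiliser of $\str_W$ in $\hAut(u)$ is contractible. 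Hence $L(\lambda_W)$ is an $\hAut(u)$-torsor, so $L(\lambda_W) \moddd \hAut(u) \simeq \ast$ and the induced map $(\mathcal{M}^\Theta_n) \moddd \hAut(u) \to \mathcal{M}^\Lambda_u$ has contractible homotopy fibres.

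The hardest step will be running this obstruction-theoretic argument $\GL_d(\R)$-equivariantly rather than in the classical non-equivariant setting. The equivariant cofibrancy of $\Theta$ assumed just before the lemma ensures both that $\hAut(u)$ has the correct homotopy type and that the relative cell decomposition of $\str_W : \Fr(TW) \to \Theta$ behaves well enough for the step-by-step lifting to go through as in the classical case.
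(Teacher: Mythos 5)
Your argument is correct, and since the paper itself defers the proof to [GR-W4, Section 9] without repeating it I cannot compare word-for-word, but the fibrewise Moore--Postnikov analysis you propose is the natural one and is surely what is meant by ``elementary homotopy theoretic methods''. Two small imprecisions are worth fixing: in your lifting square the bottom horizontal arrow should be $u : \Theta \to \Lambda$ rather than $\lambda_W$ (the latter is the outer composite $u \circ \str_W$), and ``stabiliser of $\str_W$'' should be read as the homotopy fibre of the orbit map, not a set-theoretic stabiliser. The one step worth making explicit is the final one: passing from $L(\lambda_W)\moddd\hAut(u)\simeq\ast$ to ``the induced map $(\mathcal{M}^\Theta_n)\moddd\hAut(u)\to\mathcal{M}^\Lambda_u$ has contractible homotopy fibres'' uses that for an $\hAut(u)$-equivariant fibration $p$ over a base with trivial action, the fibre of the induced map on homotopy orbits is the homotopy orbit space of the fibre of $p$. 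This is true (replace $p$ by an equivariant fibration, and then the strict fibre of $(E\hAut(u)\times\mathcal{M}^\Theta_n)/\hAut(u)\to\mathcal{M}^\Lambda_u$ over $y$ is $(E\hAut(u)\times p^{-1}(y))/\hAut(u)$), but since it can fail when the target carries a nontrivial action it deserves a sentence. Lastly, you could sidestep the equivariant obstruction theory by translating to spaces over $B\OO(d)$ as in \S\ref{sec:gl_dr-spaces-versus}, where the lifting arguments are classical.
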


The map $\mathcal{M}^\Theta_n \to \Omega^\infty MT\Theta$ explained in
\S\ref{sec:mmm-class-gener} commutes with the actions of
$\hAut(u)$ and induces a map of fibre sequences
\begin{equation}\label{eq:21}
\begin{tikzcd}
    \mathcal{M}^\Theta_n \rar \dar &\mathcal{M}^\Lambda_u
    \rar \dar & B(\hAut(u)) \arrow[equals]{d}\\
    \Omega^\infty MT\Theta \rar & (\Omega^\infty MT\Theta)\moddd
    \hAut(u) \rar & B(\hAut(u)).
\end{tikzcd}
\end{equation}
In the setup of Theorem~\ref{thm:general-str},
 $\mathcal{M}^\Theta(W,\str_W)$ is one path component of
$\mathcal{M}^\Theta_n$, and similarly $\mathcal{M}^\Lambda(W,\lambda_W)$
is one path component of $\mathcal{M}^\Lambda_u$.  A slightly stronger version of
Theorem~\ref{thm:homotopical}, which is the statement actually proved in \cite[Section 8]{GR-W4}, shows that the left-most vertical map is acyclic in the
range of degrees indicated in Remark~\ref{remark:disconnected}. Theorem~\ref{thm:general-str} is then deduced by a spectral sequence
comparison argument.

\begin{remark}\label{rem:Kreck}
This formulation has content even at the level of path components. Suppose that $W$ is a simply-connected $2n$-manifold for $2n \geq 6$, $\str_W : \Fr(TW) \to \Theta$ is $n$-connected, and $g(W, \str_W) \geq 3$. If $(W', \str_{W'})$ is another such $\Theta$-manifold and
$$\alpha(W, \str_W) = \alpha(W', \str_{W'}) \in \pi_0(\Omega^\infty MT\Theta) = \pi_0(MT\Theta)$$
then it follows that $(W, \str_W)$ and $(W', \str_{W'})$ lie in the same path-component of $\mathcal{M}^\Theta_n$, i.e.\ there is a diffeomorphism from $W$ to $W'$ which pulls back $\str_{W'}$ to $\str_W$ up to homotopy. This recovers a theorem of Kreck \cite[Theorem D]{Kreck}, though our requirement on genus is slightly stronger than Kreck's.
\end{remark}

In practice, one usually calculates the cohomology of $\Omega^\infty_{[W,\str_W]} MT\Theta$
first, and then uses a spectral sequence to calculate the homology or cohomology of the Borel construction in  Corollary~\ref{cor:general-str}, or equivalently (for $g(W, \lambda_W) \geq 3$) one calculates the cohomology of
$\mathcal{M}^\Theta(W,\str_W)$ and then uses the spectral sequence for the fibre sequence
\begin{equation}\label{eq:22}
\cM^\Theta(W, \str_W)\lra \cM^\Lambda(W, \lambda_W) \lra B(\hAut(u)_{[W, \str_W]}).
\end{equation}
We shall see examples of such calculations in \S\ref{sec:WgRat}, \S\ref{sec:VdRat}, and \S\ref{sec:OtherRat}.

\subsection{$\GL_d(\R)$-spaces versus spaces over $B\OO(d)$}
\label{sec:gl_dr-spaces-versus}

It is well known that the homotopy theory of spaces with action of $\GL_d(\R)$ is equivalent to the homotopy theory of spaces over $B\GL_d(\R) \simeq B\OO(d)$, where the weak equivalences are the equivariant maps that are weak equivalences of underlying spaces, respectively fibrewise maps that are weak equivalences of underlying spaces.  The translation goes via the space $E\GL_d(\R)$ which simultaneously comes with an action of $\GL_d(\R)$ and a map to $B\GL_d(\R)$.  Explicitly, given a $\GL_d(\R)$-space $\Theta$, the Borel construction $B = \Theta \moddd \GL_d(\R)$ comes with a map $B \to B\GL_d(\R)$; conversely, given a space $B$ and a map $\theta: B \to B\GL_d(\R)$ the fibre product $\Theta = E\GL_d(\R) \times_{B\GL_d(\R)} B$ comes with an action; these processes are inverse up to (equivariant/fibrewise) weak equivalence, as $E\GL_d(\R)$ is contractible.

Therefore all of the theorems above that depend on a $\GL_d(\R)$-space $\Theta$ may be stated in equivalent ways taking as input a space $B$ and a map $\theta: B \to B\OO(d)$.\index{$\theta$-structure} In the papers
\cite{GR-W2, GR-W3, GR-W4} we have taken the latter point of view.  
In this picture, a $\theta$-structure on a manifold $W$ is a (fibrewise linear) vector bundle map $\hat{\ell}_W : TW \to \theta^* \gamma$, where $\gamma$ denotes the universal vector bundle on $B\GL_d(\R)$.  As in those papers, we shall use the notation
\begin{equation*}
  MT\theta = MT\Theta,
\end{equation*}
when $\theta: B \to B\OO(d)$ is the map corresponding to the
$\GL_d(\R)$-space $\Theta$; i.e., $MT\theta = B^{-\theta}$ is the Thom
spectrum of the virtual inverse of the vector bundle classified by
$\theta: B \to B\OO(d)$.

We have already seen definitions which may be stated
more directly in terms of $(B,\theta)$ than of
$(\Theta,\text{action})$, e.g.\ the characteristic classes $\kappa_c$
from \S\ref{sec:char-class-1}. The constructions in
Theorem~\ref{thm:general-str} form another such example, which we shall now explain.  Given $\Lambda$ and $\lambda_W$ as in the theorem, set
$B' = \Lambda \moddd \GL_d(\R)$.  Up to contractible choice, $\lambda_W$
induces a map $W \to B'$, which one then Moore--Postnikov factors as
\begin{equation*}
  W \lra B \lra B',
\end{equation*}
into an $n$-connected cofibration followed by an $n$-co-connected
fibration.  In this picture, $\hAut(u)$ is simply the
group-like monoid of those self-maps of $B$ over $B'$ that are weak
equivalences. For this to have to the correct homotopy type $B$ should be fibrant and cofibrant in the category of spaces over $B'$.

In \S\ref{sec:RatCalc} and \S\ref{sec:AbCalc} we will exclusively adopt this point of view.

\subsection{Boundary}
\label{sec:boundary}

A further generalisation, also proved in \cite[Section 9]{GR-W4},
allows the compact manifolds $W$ to have non-empty boundary. The boundary should then be a closed
$(2n-1)$-manifold $P$, which should be equipped with an equivariant map
$\str_P: \Fr(\epsilon^1 \oplus TP) \to \Theta$.  The pair $(P,\str_P)$
should be fixed and every compact $2n$-manifold in sight should come
with a specified diffeomorphism $\partial W \cong P$ compatible with a
structure $\str_W: \Fr(TW) \to \Theta$.

In terms of classified bundles as in \S\ref{sec:smooth-bundles}
and \S\ref{sec:classifying-spaces}, $\class^\Theta_\bullet$ should be
replaced with the functor $\class_\bullet^{\Theta, P,\str_P}$ whose value on a smooth manifold
$X$ (without boundary, possibly non-compact) has 0-simplices the
smooth proper maps $\pi: E \to X$ equipped with equivariant maps
$\str: \Fr(T_\pi E) \to \Theta$ and a diffeomorphism $\partial E = X \times P$
such that the restriction of $\str$ to
$\Fr(T_\pi E \vert_{\partial E}) = X \times \Fr(TP)$ is equal to the map arising
from $\str_P$.

This kind of bundle also admits a classifying space, denoted
$\mathcal{N}^\Theta(P,\str_P)$ and called the \emph{moduli space of
  null bordisms}\index{moduli space!of null bordisms} of $(P,\str_P)$.  The subspace defined by the
condition that $\str_W: \Fr(TW) \to \Theta$ be $n$-connected is denoted
$\mathcal{N}^\Theta_n(P,\str_P)$ and is the \emph{moduli space of
  highly connected null bordisms}.\index{moduli space!of highly-connected null bordisms}  The path component of
$\mathcal{N}^\Theta(P,\str_P)$ containing $(W,\str_W)$ shall be
denoted $\mathcal{M}^\Theta(W,\str_W)$, as before.  Notice that
$(P,\str_P)$ is determined by $P = \partial W$ and $\str_W$ by
restricting $\str_W$ to $TW\vert_P \cong \epsilon^1 \oplus TP$.  These classifying spaces are introduced in \cite[Definition 1.1]{GR-W4} using a similar notation.

Theorem~\ref{thm:homotopical} then has the following direct generalisation, also stated as \cite[Corollary 1.8 and Section 8.4]{GR-W4}.

\begin{theorem}
  Let $d = 2n > 4$, let $\Theta$ be a $\GL_d(\R)$-space,
  let $P$ be a closed smooth $(d-1)$-manifold and $\str_P: \Fr(\epsilon^1 \oplus TP) \to \Theta$ be a $\GL_d(\R)$-equivariant map. Then there is a map (canonical up to homotopy, see below)
  \begin{equation}\label{eq:14}
    \alpha: \mathcal{N}^\Theta_n(P,\str_P) \lra
    \Omega_{\alpha(P,\str_P),0}(\Omega^{\infty-1}MT\Theta),
  \end{equation}
  where $\Omega_{\alpha(P,\str_P),0}$ denotes the space of paths
  starting at a certain point
  $\alpha(P,\str_P) \in \Omega^{\infty-1}MT\Theta$ and ending at the
  basepoint, with the following property.

  When restricted to the path component containing a
  particular $(W,\str_W)$, it is a homology equivalence onto the path
  component it hits, in degrees up to $(g(W,\str_W) - 4)/3$ and
  possibly with twisted coefficients.  If in addition $\Theta$ is
  spherical, then~(\ref{eq:14}) induces an isomorphism in homology
  with constant coefficients in degrees up to $(g(W,\str_W) - 3)/2$.
\end{theorem}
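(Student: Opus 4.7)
The plan is to construct the map $\alpha$ by a relative parametrised Pontryagin--Thom construction and then to reduce the homology assertion to the closed case of Theorem~\ref{thm:homotopical} by gluing in a single fixed null bordism of $(P, \str_P)$.

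First I would define $\alpha$. Given a $\Theta$-bundle with boundary, i.e.\ a proper smooth $\pi : E \to X$ with $\partial E \cong X \times P$ and a $\Theta$-structure $\str$ restricting on the boundary to the structure induced by $\str_P$, I would apply the parametrised relative Pontryagin--Thom collapse to a relative embedding $E \hookrightarrow X \times \R^N$ sending $\partial E$ into $X \times \R^{N-1}$. Combined with the classifying map $E \to B$ arising from $\str$, this yields a stable map $\Sigma^\infty_+ X \to \Sigma^{-1} MT\Theta$ whose boundary contribution is canonically the constant map at $\alpha(P, \str_P)$. Unwinding this gives, for each $x \in X$, a path in $\Omega^{\infty-1} MT\Theta$ from $\alpha(P, \str_P)$ to the basepoint, naturally in $X$. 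Applied to the universal bundle over $\mathcal{N}^\Theta_n(P, \str_P)$, this produces the map (\ref{eq:14}).

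Next I would reduce the homology claim to Theorem~\ref{thm:homotopical}. Fix an auxiliary null bordism $(W_0, \str_{W_0})$ of $(P, \str_P)$ with orientation reversed on the boundary, and define the gluing map
\[ \gamma : \mathcal{N}^\Theta_n(P, \str_P) \lra \mathcal{M}^\Theta_n, \qquad (W, \str_W) \longmapsto (W \cup_P W_0,\, \str_W \cup \str_{W_0}). \]
The point $\alpha(W_0, \str_{W_0})$, regarded as a path from $\alpha(P, \str_P)$ to the basepoint, gives by concatenation a homotopy equivalence
\[ \gamma^\alpha : \Omega_{\alpha(P,\str_P), 0}(\Omega^{\infty-1} MT\Theta) \xrightarrow{\simeq} \Omega^\infty MT\Theta. \]
A direct check using additivity of the relative Pontryagin--Thom construction under gluing along a common boundary shows that the square
\[
\begin{tikzcd}
\mathcal{N}^\Theta_n(P, \str_P) \rar{\alpha} \dar{\gamma} & \Omega_{\alpha(P,\str_P), 0}(\Omega^{\infty-1} MT\Theta) \dar{\gamma^\alpha}\\
\mathcal{M}^\Theta_n \rar{\alpha_{\mathrm{closed}}} & \Omega^\infty MT\Theta
\end{tikzcd}
\]
commutes up to homotopy. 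Since any disjoint family of admissible embeddings $S^n \times S^n \setminus \{\ast\} \hookrightarrow W$ remains admissible in $W \cup_P W_0$, we have $g(W \cup_P W_0, \str_W \cup \str_{W_0}) \geq g(W, \str_W)$, so Theorem~\ref{thm:homotopical} applied to the bottom row yields acyclicity of $\alpha_{\mathrm{closed}}$ in the claimed range of degrees.

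The main obstacle is to show that $\gamma$ is itself acyclic in the same range onto the path component it hits; given this, two-out-of-three in the diagram, together with the equivalence on the right, yields the theorem. This is a boundary variant of homological stability. The argument of \cite{GR-W3} for stabilising moduli of closed manifolds by gluing in $S^n \times S^n \setminus \{\ast\}$ adapts to manifolds with prescribed boundary $(P, \str_P)$: iterating such stabilisations near $\partial W$ defines a sequence of maps cofinal with $\gamma$ up to homotopy (after possibly replacing $W_0$ by a stably equivalent null bordism), and the boundary-version stability theorem asserts that each stabilisation is a homology isomorphism in the claimed range. The improved range for spherical $\Theta$ follows from the corresponding spherical stabilisation argument of \cite[Section 7]{GR-W3}, carried out in the presence of boundary.
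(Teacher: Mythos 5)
Your construction of $\alpha$ via the relative parametrised Pontryagin--Thom collapse is essentially the one the paper intends: the text says both $\alpha(P,\str_P)$ and the map~(\ref{eq:14}) "are constructed by the procedure in \S\ref{sec:mmm-class-gener}," and that procedure is exactly the collapse map together with the classifying map to $MT\Theta$. That part is fine.

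The reduction strategy, however, runs in the opposite direction from the actual proof in \cite{GR-W4} and has a genuine gap. In \cite{GR-W4} the case with boundary is the primary result --- proved via the cobordism category of \cite{GMTW, GR-W2}, group completion, and the homological stability theorems of \cite{GR-W3} --- and the closed case (Theorem~\ref{thm:homotopical} above, \cite[Corollary 1.7]{GR-W4}) is literally the special case $P = \emptyset$, as noted immediately after the statement of the present theorem. You instead try to bootstrap from the closed case by gluing a fixed null bordism $W_0$, and the whole weight of the argument lands on the assertion that $\gamma : \mathcal{N}^\Theta_n(P,\str_P) \to \mathcal{M}^\Theta_n$ is acyclic in the stated range. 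You call this "a boundary variant of homological stability" and wave at \cite{GR-W3}, but what \cite{GR-W3} proves is stability for the specific stabilisation by $W_{1,1} = S^n \times S^n \setminus \mathrm{int}(D^{2n})$ (and more generally for cobordisms built from middle-dimensional handles), not acyclicity of the map that caps off an arbitrary prescribed boundary with an arbitrary fixed null bordism. That statement is essentially of the same strength as the theorem you are trying to prove, so invoking it without proof is circular.

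There is also a more concrete obstruction in your diagram: $\gamma$ need not land in $\mathcal{M}^\Theta_n$ at all. Even if $\str_W : \Fr(TW) \to \Theta$ is $n$-connected, the glued structure $\str_W \cup \str_{W_0}$ on $W \cup_P W_0$ will fail to be $n$-connected unless $W_0$ is chosen with great care (for instance so that $W_0$ is built from $W$ by handles of index $\leq n$), and such a $W_0$ may not exist for a given $(P,\str_P)$. Since Theorem~\ref{thm:homotopical} only applies on $\mathcal{M}^\Theta_n$, the bottom row of your square is not in general covered by the closed-case theorem, and the two-out-of-three step does not get off the ground. To make a gluing reduction work one would need both a correct choice of $W_0$ and a genuine gluing-acyclicity theorem --- at which point one is redoing the work of \cite[Section 8]{GR-W4} rather than shortcutting it.
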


Both the point $\alpha(P,\str_P)$ and the map~(\ref{eq:14}) are
constructed by the procedure in \S\ref{sec:mmm-class-gener}.  If
the codomain of~(\ref{eq:14}) is non-empty, it is of course non-canonically homotopy
equivalent to $\Omega^\infty MT\Theta$, but the map most naturally
takes values in the path space.  In the special case $P = \emptyset$ we have $\mathcal{N}^\Theta_n(\emptyset) = \mathcal{M}^\Theta_n$, and the map \eqref{eq:14} is the same as the map appearing in~(\ref{eq:21}).

As in \S\ref{sec:two-fiber-sequences}, a version for a general tangential structure $\Lambda$ may be deduced by taking homotopy orbits with respect to the monoid
\begin{equation}\label{eq:19}
  \hAut(u \ \mathrm{rel}\  P) = \{\phi \in \hAut(u) \mid
  \phi \circ \str_P = \str_P\},
\end{equation}
provided $\str_P: \Fr(\epsilon^1 \oplus TP) \to \Theta$ is an
equivariant cofibration and $u: \Theta \to \Lambda$ is an equivariant
$n$-co-connected fibration, as can be arranged.  Homotopy equivalently, factor the induced map $W \to B' = \Lambda \moddd \GL_d(\R)$ as an $n$-connected cofibration
$W \to B$ followed by an $n$-co-connected fibration $B \to B'$, and
define $\hAut(u \ \mathrm{rel}\ P)$ as the homotopy
equivalences of $B$ over $B'$ and under $P$.  We formulate the conclusion.
\begin{theorem}
  \label{thm:main-with-boundary}
  Let $n$, $d$, $\Lambda$ and $\lambda_W: \Fr(TW) \to \Lambda$ be as in Theorem~\ref{thm:general-str}, but allow $W$ to be a compact manifold with boundary $P = \partial W$.  Let $\Theta$, $\str_W$, and $u$ be as in Theorem~\ref{thm:general-str}, and $\str_P$ denote the restriction of $\str_W$ to $P$.  Then there is a map
  \begin{equation}
    \alpha: \mathcal{M}^\Lambda(W,\lambda_W) \lra \big(\Omega_{\alpha(P,\str_P),0} \Omega^{\infty - 1} MT\Theta\big) \moddd \hAut(u \ \mathrm{rel}\ P)
  \end{equation}
  which, when regarded as a map onto the path component that it hits,
  induces an isomorphism in homology in a range of degrees, exactly as
  in Theorem~\ref{thm:general-str}.
\end{theorem}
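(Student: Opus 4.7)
The plan is to mimic in the boundary setting the deduction of Theorem~\ref{thm:general-str} from Theorem~\ref{thm:homotopical} that is sketched in \S\ref{sec:two-fiber-sequences}, using as input the boundary version of Theorem~\ref{thm:homotopical} stated just before the theorem in question and taking homotopy orbits with respect to $\hAut(u\ \mathrm{rel}\ P)$ rather than $\hAut(u)$.

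First, I would set the stage: equivariantly factor $\lambda_W : \Fr(TW) \to \Lambda$ as an $n$-connected cofibration $\str_W: \Fr(TW) \to \Theta$ followed by an $n$-co-connected fibration $u: \Theta \to \Lambda$, and let $\str_P$ be the restriction of $\str_W$ to $\Fr(\epsilon^1 \oplus TP) \subset \Fr(TW)$. The submonoid $\hAut(u\ \mathrm{rel}\ P) \subset \hAut(u)$ of~(\ref{eq:19}) then acts by postcomposition on $\mathcal{N}^\Theta_n(P,\str_P)$, on the path space $\Omega_{\alpha(P,\str_P),0}\Omega^{\infty-1}MT\Theta$ (here it is essential that self-maps $\phi$ of $\Theta$ satisfying $\phi\circ\str_P=\str_P$ fix the base point $\alpha(P,\str_P)$), and on the pre-composed map~(\ref{eq:14}), by the naturality of the Pontryagin--Thom construction of \S\ref{sec:mmm-class-gener}.

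Next, I would establish the relative analogue of the equivalence~(\ref{eq:13}), namely the weak equivalence
\begin{equation*}
  \mathcal{N}^\Theta_n(P,\str_P)\moddd \hAut(u\ \mathrm{rel}\ P) \xrightarrow{\simeq} \mathcal{N}^\Lambda_u(P,\lambda_P),
\end{equation*}
where $\mathcal{N}^\Lambda_u(P,\lambda_P)$ denotes the subspace of $\mathcal{N}^\Lambda(P,\lambda_P)$ consisting of components whose $\Lambda$-structure lifts, relative to $\str_P$, to an $n$-connected $\Theta$-structure. This is a formal homotopy-theoretic statement about classifying spaces of fibre-bundles-with-structure: a $\Lambda$-structure whose restriction to $P$ equals $\lambda_P$ and which admits such a lift is classified, up to contractible choice, by a relative Moore--Postnikov lift to $\Theta$ together with the action of self-equivalences of $\Theta$ over $\Lambda$ under $\Fr(\epsilon^1 \oplus TP)$; this is exactly the content of the homotopy orbit construction on the left. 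The argument is the obvious ``rel $P$'' modification of the proof of the corresponding lemma in \cite[Section 9]{GR-W4}, using that $u$ is an equivariant fibration and $\str_P$ an equivariant cofibration.

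Combining the two points above yields a map of homotopy fibre sequences, analogous to~(\ref{eq:21}),
\begin{equation*}
\begin{tikzcd}
    \mathcal{N}^\Theta_n(P,\str_P) \rar \dar & \mathcal{N}^\Lambda_u(P,\lambda_P) \rar \dar & B\hAut(u\ \mathrm{rel}\ P) \arrow[equals]{d}\\
    \Omega_{\alpha(P,\str_P),0}\Omega^{\infty-1}MT\Theta \rar & \big(\Omega_{\alpha(P,\str_P),0}\Omega^{\infty-1}MT\Theta\big)\moddd \hAut(u\ \mathrm{rel}\ P) \rar & B\hAut(u\ \mathrm{rel}\ P),
\end{tikzcd}
\end{equation*}
with the middle vertical map giving the desired $\alpha$ after restricting to the path component of $(W,\lambda_W)$. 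By the stronger, local-coefficient form of the boundary version of Theorem~\ref{thm:homotopical} (the analogue of the statement actually proved in \cite[Section 8]{GR-W4}), the left-hand vertical map restricted to the component of $(W,\str_W)$ is acyclic in the range $\leq (g(W,\str_W)-4)/3$, respectively $\leq (g(W,\str_W)-3)/2$ in the spherical case. A Serre spectral sequence comparison in the fibre sequences above then transports this acyclicity to the middle column in the same range, giving the conclusion. The equality $g(W,\lambda_W) = g(W,\str_W)$ under the Moore--Postnikov factorisation is \cite[Lemma 9.4]{GR-W4}, so the genus appearing in the statement is the correct one.

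The main obstacle I expect is not the spectral sequence comparison, which is routine once the diagram is in place, but rather the verification that $\hAut(u\ \mathrm{rel}\ P)$ acts compatibly on all three objects in the bottom fibre sequence with the correct basepoints: one must check that the boundary version of the parametrised Pontryagin--Thom map~(\ref{eq:14}) is canonically equivariant for the submonoid fixing $\str_P$, not merely for $\hAut(u)$, and that the path space is taken with respect to a basepoint which is itself fixed by this submonoid. This bookkeeping, together with arranging cofibrancy of $\str_P$ so that $\hAut(u\ \mathrm{rel}\ P)$ has the correct homotopy type, is the substantive input beyond the closed case.
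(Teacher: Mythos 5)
Your proposal follows the same route the paper indicates: take homotopy orbits of the $\mathcal{N}^\Theta_n(P,\str_P) \to \Omega_{\alpha(P,\str_P),0}\Omega^{\infty-1}MT\Theta$ equivalence (the boundary version of Theorem~\ref{thm:homotopical}) by the submonoid $\hAut(u\ \mathrm{rel}\ P)$, identify the resulting homotopy orbit space with $\mathcal{N}^\Lambda_u(P,\lambda_P)$ by the relative analogue of~(\ref{eq:13}), and transport the acyclicity across the map of fibre sequences by a spectral-sequence comparison; you also correctly flag the cofibrancy of $\str_P$ and the fixing of the basepoint $\alpha(P,\str_P)$ as the bookkeeping that makes this work. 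This matches the deduction sketched in \S\ref{sec:boundary} and carried out in \cite[Theorem~9.5]{GR-W4}.
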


This is \cite[Theorem 9.5]{GR-W4}, and the three lines following its proof.  The relevant path component may again be re-written using the orbit-stabiliser theorem, as in Corollary~\ref{cor:general-str}.

\begin{remark} 
As in Remark \ref{remark:special-cases}, in the special case $\Theta=\{*\}$ a $\Theta$-structure contains no information and we can simply write $\mathcal{M}(W)$ for $\mathcal{M}^\Theta(W,\str_W)$. This space classifies smooth fibre bundles with fibres diffeomorphic to $W$ and trivialised boundary, and we have a weak equivalence
$$\mathcal{M}(W) \simeq B\Diff_\partial(W)$$
where $\Diff_\partial(W)$ denotes the group of diffeomorphisms of $W$ which fix an open neighbourhood of the boundary, with the $C^\infty$ topology.
\end{remark}

There are no connectivity assumptions imposed on
$\str_P: \Fr(\epsilon^1 \oplus TP) \to \Theta$, but if it happens to
be $(n-1)$-connected then the monoid $\hAut(u \ \mathrm{rel}\  P)$
is contractible.  More generally we have the following.

\begin{lemma}\label{lem:aHutContr}
  If the pair $(W, P)$ is $c$-connected for some $c \leq n-1$, then the monoid $\hAut(u \ \mathrm{rel}\  P)$ is a (non-empty) $(n-c-2)$-type.  In particular, it is contractible if $(W,P)$ is $(n-1)$-connected.
\end{lemma}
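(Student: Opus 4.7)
The plan is to work in the space-over-$B'$ model of \S\ref{sec:gl_dr-spaces-versus} and attack $\hAut(u\ \mathrm{rel}\ P)$ by obstruction theory. In that model $u : B \to B'$ is an $n$-co-connected fibration, $P \to B$ (the restriction of $W\to B$) may be arranged to be a cofibration, and $\hAut(u\ \mathrm{rel}\ P)$ is the union of equivalence-components of the mapping space $\Map^{\mathrm{rel}\,P}_{B'}(B,B)$ of self-maps of $B$ over $B'$ and under $P$. The identity map $\mathrm{id}_B$ always lies in this space, so non-emptiness is automatic and the rest of the argument concerns its higher homotopy.

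First I would establish that $(B,P)$ is $c$-connected. From the long exact sequence of the triple $(B,W,P)$, together with $\pi_k(W,P) = 0$ for $k \le c$ and $\pi_k(B,W) = 0$ for $k \le n$ (the latter being the $n$-connectivity of $W\to B$), one deduces $\pi_k(B,P) = 0$ for $k \le c$, using the standing hypothesis $c \le n-1$. Consequently $(B,P)$ admits a relative CW structure with cells only in dimensions $\ge c+1$.

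Next I would identify $\Map^{\mathrm{rel}\,P}_{B'}(B,B)$ with the space of sections of the pullback fibration $\mathrm{pr}_1 : B\times_{B'}B \to B$ extending the tautological section over $P$. The fibre over $b\in B$ is the homotopy fibre $F_b$ of $u$ at $u(b)$, and the $n$-co-connectivity of $u$ makes $F_b$ an $(n-1)$-type, i.e.\ $\pi_k(F_b) = 0$ for $k\ge n$. Filter $B$ by successively attaching the cells of the relative CW structure; attaching a $k$-cell (with $k\ge c+1$) changes the section space by a homotopy fibre that is locally a copy of $\Omega^k F$, and $\pi_i(\Omega^k F) = \pi_{i+k}(F)$ vanishes whenever $i \ge n-k$. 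The weakest such bound, coming from $k = c+1$, gives vanishing for $i \ge n-c-1$; passing to the homotopy inverse limit up the cellular filtration preserves this, so $\Map^{\mathrm{rel}\,P}_{B'}(B,B)$ is an $(n-c-2)$-type. Since $\hAut(u\ \mathrm{rel}\ P)$ is open-and-closed inside it, the same bound holds. In the special case $c=n-1$ this gives a non-empty $(-1)$-type, hence a contractible space.

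The main (mild) obstacle is bookkeeping the local coefficients: the homotopy fibres $F_b$ assemble into a fibration over $B$ with a $\pi_1(B)$-action, so the obstruction groups that appear are genuinely twisted. However, only the connectivity of the fibre is used in the bounds above, so twisting does not affect the vanishing range. An equivalent route is to climb the Moore--Postnikov tower of $u$ one Eilenberg--MacLane stage at a time, and check that each stage contributes to $\pi_i$ of the section space only in the same range, giving the same conclusion.
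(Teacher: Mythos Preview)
Your proof is correct and rests on the same two facts as the paper's: that $P \to B$ is $c$-connected (you deduce this from the triple $(B,W,P)$, the paper from composing $P \hookrightarrow W$ with $\ell_W$) and that the fibre of $u$ is an $(n-1)$-type. The packaging differs slightly: the paper translates a nullhomotopy of $f: S^k \to \hAut(u\ \mathrm{rel}\ P)$ directly into a single relative lifting problem
\[
\begin{tikzcd}
(S^k \times B) \cup_{S^k \times P} (D^{k+1} \times P) \rar \dar & B \dar{u}\\
D^{k+1} \times B \rar & B'
\end{tikzcd}
\]
and observes that the left pair is $(c+k+1)$-connected while $u$ is $n$-co-connected, whereas you model the whole mapping space as a section space and climb a cellular filtration. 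Your route is a bit more elaborate but makes the role of the fibre $\Omega^k F$ explicit; the paper's is shorter. Both are standard obstruction theory and neither buys anything the other doesn't.
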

A familiar special case of this observation is the fact that if a
diffeomorphism of an oriented surface with non-empty boundary is the
identity on the boundary, then the diffeomorphism is automatically
orientation preserving.
\begin{proof}
  As before, let us write $B = \Theta \moddd \GL_d(\R)$ and
  $B' = \Lambda \moddd \GL_d(\R)$.  We are then asking for homotopy
  automorphisms of $B$ over $u: B \to B'$ and under $\ell_P :P \hookrightarrow B$. By adjunction, to give a nullhomotopy of a map
  $f : S^k \to \hAut(u \ \mathrm{rel}\ P)$ is to solve the relative
  lifting problem
  \begin{equation*}
    \begin{tikzcd} 
      (S^k \times B) \cup_{S^k \times P} (D^{k+1} \times P) \dar \arrow{rr}{\tilde{f} \cup (\ell_{P} \circ \text{proj})} &  & B \dar{u}\\
      D^{k+1} \times B \rar{\text{proj}} & B \rar{u} & B'.
    \end{tikzcd}
  \end{equation*}
  The map $\ell_P: P \to B$ is $c$-connected because both $P \subset W$ and $\ell_W: W \to B$ are (the latter is even $n$-connected). Thus the pair
  \begin{equation*}
    (D^{k+1} \times B, (S^k \times B) \cup_{S^k \times P} (D^{k+1} \times P))
  \end{equation*}
  is $(c+k+1)$-connected. But the map $u: B \to B'$ is
  $n$-co-connected, so there are no obstructions to solving this
  lifting problem if $c+k+1 \geq n$, i.e.\ $k \geq n - c - 1$.  This proves that $\hAut(u\ \mathrm{rel}\ P)$ is an $(n-c-2)$-type, and it is non-empty because it contains the identity map.
\end{proof}

\begin{remark}
  Formulating a statement which is valid for manifolds with non-empty boundary is not purely for the purpose of added generality: it is essential for the strategy of proof in all three papers \cite{GR-W2}, \cite{GR-W3}, \cite{GR-W4}.  For example, the homological stability results in \cite{GR-W3} are proved by a long \emph{handle induction} argument, in which a compact manifold is decomposed into finitely many \emph{handle attachments}; even if one is mainly interested in closed manifolds, this process will create boundary.  Similarly, an important role in both \cite{GR-W2} and \cite{GR-W4} is played by \emph{cobordism categories} as studied in \cite{GMTW}, whose morphisms are manifolds with boundary and composition is gluing along common boundary components. For example, given a $\Theta$-cobordism $(K, \str_K) : (P, \str_P) \leadsto (Q, \str_Q)$ there is a continuous map
$$(K, \str_K) \cup - : \mathcal{N}^\Theta(P,\str_P) \lra \mathcal{N}^\Theta(Q,\str_Q)$$
given by gluing on $(K, \str_K)$.
\end{remark}

\subsection{Fundamental group}\label{sec:pi1}

The main theorem in either of the three forms given above
(Theorems~\ref{thm:main-cohomological}, \ref{thm:homotopical},
and~\ref{thm:general-str}) assumed the manifolds $W$ were simply-connected, but in fact it suffices that the fundamental groups
$\pi = \pi_1(W,w)$ be \emph{virtually polycyclic},\index{virtually polycyclic} i.e.\ has a subnormal series with finite or cyclic quotients. In this case the \emph{Hirsch length} $h(\pi)$ is the number of infinite cyclic quotients in such a series. The only price to pay is that the ranges of homology equivalence
become offset by a constant depending on $h(\pi)$: the homology
isomorphisms Theorems~\ref{thm:homotopical} and~\ref{thm:general-str}
hold in degrees $\leq (g(W,\str_W) - (h(\pi) + 5))/2$ with integral
coefficients if $\Theta$ is spherical, and in degrees $\leq (g(W,\str_W) - (h(\pi) + 6))/3$ with
local coefficients. This generalisation was established by
Friedrich \cite{NinaPaper}.

For arbitrary $\pi$ there is a sense in which the theorems
hold in ``infinite genus'': certain maps become acyclic after taking a
colimit over forming connected sum with $S^n \times S^n$ infinitely
many times. In this form the assumption $2n > 4$ is also unnecessary.
See \cite[Sections 1.2 and 7]{GR-W4} for the statement and proof.

\subsection{Outlook}
\label{sec:outlook}

We have attempted to give an overview of the methods developed in
\cite{GR-W2}, \cite{GR-W3}, \cite{GR-W4}, with an emphasis on the main
results from there as they may be applied in calculations in practice.
This is by no means a survey of everything known, let us briefly
mention some recent developments and applications that we have not covered:

\begin{enumerate}[(i)]
\item These results---in the form of the calculation described in Theorem \ref{thm:Wg1Rat} below---have been used by Weiss \cite{WeissDalian} to prove that $p_n \neq e^2 \in H^{4n}(B\mathrm{STop}(2n);\bQ)$ for large enough $n$. These methods were later used by Kupers \cite{KupersFin} to establish the finite generation of homotopy groups of $\Diff_\partial(D^d)$ for $d \neq 4,5,7$.
\item These results have been used by Botvinnik, Ebert, and Randal-Williams \cite{BER-W}, Ebert and Randal-Williams \cite{ER-Wpsc}, and Botvinnik, Ebert, and Wraith \cite{BEW} to study the topology of spaces of Riemannian metrics of positive scalar, or Ricci, curvature.
\item These results have been used by Krannich \cite{KrannichExotic} to show that if $\Sigma$ is a homotopy $2n$-sphere\index{homotopy sphere} then $B\Diff(M)$ and $B\Diff(M \# \Sigma)$ have the same homology in the stable range with $\bZ[\frac{1}{k}]$-coefficients, where $k$ is the order of $\Sigma$ is the group of homotopy spheres.
\item Progress towards a similar understanding for manifolds of \emph{odd dimension} has been made by Perlmutter \cite{PerlmutterProdSpheres, PerlmutterLink, PerlmutterStabHand}, Botvinnik and Perlmutter \cite{BotvinnikPerlmutter} and Hebestreit and Perlmutter \cite{HebestreitPerlmutter}.
\item Progress towards versions for \emph{topological} and \emph{piecewise linear} manifolds has been made by Gomez-Lopez \cite{LopezThesis}, Kupers \cite{KupersTopStab}, and Gomez-Lopez and Kupers \cite{LopezKupers}.
\item Progress towards versions for \emph{equivariant smooth} manifolds has been made by Galatius and Sz\H{u}cs \cite{SzucsGalatius}.
\item There are analogues of many of the theorems above, when the topological group $\Diff(W)$ is replaced by its \emph{underlying discrete group}, by Nariman \cite{Nariman1, Nariman2, Nariman3}.
\item Progress towards understanding the homotopy equivalences of high genus manifolds have been obtained by Berglund and Madsen \cite{BerglundMadsenII}.  At present their results seem to be qualitatively quite different from the results described here.
\end{enumerate}

\section{Rational cohomology calculations}\label{sec:RatCalc}

We have already advertised the feature that the general theory surveyed in \S\ref{sec:char-class-XXX} and \S\ref{sec:gener-vers-main} above is amenable to explicit calculations.  In this section and the next, we back up this claim with some examples, while simultaneously illustrating how the abstract homotopy theory in \S\ref{sec:gener-vers-main} plays out in concrete examples.

In practice, given a manifold $W$ and a structure $\lambda_W: \mathrm{Fr}(W) \to \Lambda$, one typically first estimates the genus of $(W,\lambda_W)$.  This step is trivial for the manifolds considered in \S\ref{sec:RatWgs} and \S\ref{sec:WgRat} below, which are defined to have high genus, but is quite interesting for the complete intersections considered in \S\ref{sec:VdRat}.  The next step would typically be to determine the associated highly-connected structure $\str_W: \mathrm{Fr}(W) \to \Theta$ and the space $B\hAut(u\ \mathrm{rel}\ P)$.  This step is mostly resolved by Lemma~\ref{lem:aHutContr} for the example given in \S\ref{sec:RatWgs}, but is again interesting for the examples in \S\ref{sec:WgRat} and especially \S\ref{sec:VdRat}.  For calculations in rational cohomology, the last step would then typically be to understand the Serre spectral sequence associated to~(\ref{eq:22}).  The complete intersections in \S\ref{sec:VdRat} again provide an interesting and very non-trivial illustration.

\subsection{The manifolds $W_{g,1}$}\label{sec:RatWgs}

Recall that we write $W_g = g(S^n \times S^n)$ for the $g$-fold connected sum, and $W_{g,1} = D^{2n} \# W_g \cong W_g \setminus \mathrm{int}(D^{2n})$. These manifolds play a distinguished role in the theory described above, as they are used to measure the genus of arbitrary $2n$-manifolds: in this sense $W_{g,1}$ is the simplest manifold of genus $g$. In the case $2n=2$ the solution by Madsen and Weiss \cite{MW} of the Mumford Conjecture\index{Mumford Conjecture}\index{Madsen--Weiss theorem} gave a description of $H^*(\cM^\orientationpreserving(W_{g,1});\bQ)$ in terms of Miller--Morita--Mumford classes in a stable range of degrees. In this section we wish to explain how the analogue of Madsen and Weiss' result in dimensions $2n \geq 6$ follows from the theory described above.

The Moore--Postnikov $n$-stage 
$$\tau_{W_{g,1}} : W_{g,1} \overset{\ell_{W_{g,1}}}\lra B \overset{\theta}\lra B\OO(2n)$$
of a map classifying the tangent bundle of $W_{g,1}$ has the cofibration $\ell_{W_{g,1}}$ $n$-connected and the fibration $\theta$ $n$-co-connected. As $W_{g,1}$ is $(n-1)$-connected and the map $\tau_{W_{g,1}}$ is nullhomotopic---because $W_{g,1}$ admits a framing---we may identify $\theta$ with the $n$-connected cover of $B\OO(2n)$, which we write as
$$\theta_n : B\OO(2n)\langle n\rangle \overset{\theta_n^\orientationpreserving}\lra B\SO(2n) \overset{\orientationpreserving}\lra B\OO(2n).$$

As the pair $(W_{g,1}, \partial W_{g,1})$ is $(n-1)$-connected, by Lemma \ref{lem:aHutContr} we find that $\hAut(\theta_n^\orientationpreserving, \ell_{\partial W_{g,1}})$ is contractible. Thus by Theorem \ref{thm:main-with-boundary} there is a map
$$\alpha : \cM^\orientationpreserving(W_{g,1}) \simeq \cM^{\theta_n}(W_{g,1},\ell_{W_{g,1}}) \lra \Omega^\infty MT\theta_n$$
which is a homology equivalence onto the path component that it hits, in degrees $* \leq \frac{g-3}{2}$, as long as $2n \geq 6$.
The rational cohomology of a path component of $\Omega^\infty MT\theta_n$ is calculated as described in Remark~\ref{rem:RatCalc}, in terms of $H^*(B\OO(2n)\langle n \rangle ;\bQ)$. To work this out we identify $B\OO(2n)\langle n\rangle = B\SO(2n)\langle n\rangle$. As
$$H^*(B\SO(2n);\bQ) = \bQ[e, p_1, p_2, \ldots, p_{n-1}]$$
is a free graded-commutative algebra the effect of taking the $n$-connected cover on cohomology groups is a simple as possible: it simply eliminates all free generators of degree $ \leq n$. Thus
$$H^*(B\SO(2n)\langle n \rangle ;\bQ) = \bQ[e, p_{\lceil \frac{n+1}{4} \rceil}, \ldots, p_{n-1}].$$

Combining all the above, we obtain the following.

\begin{theorem}\label{thm:Wg1Rat}
For $2n \geq 6$, let $\mathcal{B}$ denote the set of monomials in the classes $e$, $p_{n-1}$, $p_{n-2}$, \ldots, $p_{\lceil \frac{n+1}{4}\rceil}$. Then the map
$$\bQ[\kappa_c \,\,\vert\,\, c \in \mathcal{B}, |c| > 2n] \lra H^*(\cM^\orientationpreserving(W_{g,1});\bQ)$$
is an isomorphism in degrees $* \leq \frac{g-3}{2}$.
\end{theorem}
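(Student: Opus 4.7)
The plan is to deduce the theorem by a direct application of Theorem~\ref{thm:main-with-boundary}, with $\Lambda$ the tangential structure classifying orientations.  Three ingredients need to be assembled: the correct Moore--Postnikov $n$-stage of the tangent classifier, contractibility of the relevant monoid of homotopy automorphisms (so that the Borel construction in the theorem collapses to a plain infinite loop space), and an explicit description of the rational cohomology of the target infinite loop space via the strategy of Remark~\ref{rem:RatCalc}.

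First I would record the geometric features of $W_{g,1}$.  It is an $(n-1)$-connected compact $2n$-manifold with boundary $S^{2n-1}$, and it is parallelisable: $S^n \times S^n$ is stably parallelisable, hence so is $W_g$, and a rank-$2n$ stably trivial bundle over a CW complex of dimension $<2n$ (such as $W_{g,1} \simeq W_g \setminus \{*\}$) is trivial.  Consequently the oriented tangent classifier $\tau_{W_{g,1}} : W_{g,1} \to B\SO(2n)$ is null-homotopic, and since $W_{g,1}$ is $(n-1)$-connected its Moore--Postnikov $n$-stage must be the $n$-connected cover
$$\tau_{W_{g,1}} : W_{g,1} \xrightarrow{\ell_{W_{g,1}}} B\SO(2n)\langle n \rangle \xrightarrow{\theta_n^{\orientationpreserving}} B\SO(2n).$$
The identity $g(W_{g,1}, \ell_{W_{g,1}}) = g$ follows directly from the definition, using the $g$ standard copies of $S^n \times S^n \setminus \{*\}$ in the connected sum.

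For the application of Theorem~\ref{thm:main-with-boundary} I would next observe that the pair $(W_{g,1}, S^{2n-1})$ is $(n-1)$-connected, so Lemma~\ref{lem:aHutContr} (with $c = n-1$) shows the monoid $\hAut(\theta_n^{\orientationpreserving}\ \mathrm{rel}\ S^{2n-1})$ is contractible.  The sphericality of $\theta_n$ is easy to check: the fibre of $\theta_n^{\orientationpreserving}$ is an $(n-1)$-type, so the obstructions to lifting $\tau_{S^{2n}}$ through it lie in $H^{i+1}(S^{2n}; \pi_i F)$ for $0 \leq i \leq n-1$, all of which vanish since then $0 < i+1 \leq n < 2n$.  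Together with Theorem~\ref{thm:main-with-boundary} these verifications produce a map
$$\alpha : \cM^{\orientationpreserving}(W_{g,1}) \lra \Omega^\infty MT\theta_n$$
which is an integral homology isomorphism onto the path component that it hits, in degrees $\leq (g-3)/2$.

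Finally, Remark~\ref{rem:RatCalc} identifies the rational cohomology of a path component of $\Omega^\infty MT\theta_n$ with the free graded-commutative $\bQ$-algebra on MMM-classes $\kappa_c$, where $c$ ranges over a homogeneous basis of $H^{>2n}(B\SO(2n)\langle n\rangle; \bQ)$; the shift by $2n$ is the Thom isomorphism for the virtual inverse of the universal bundle.  Rationally, $B\SO(2n)$ has vanishing $k$-invariants (it is a compact Lie classifying space) and is a product of Eilenberg--MacLane spaces on $p_1,\dots,p_{n-1},e$ in degrees $4,8,\dots,4n-4,2n$; passing to the $n$-connected cover simply deletes those generators of degree $\leq n$, leaving
$$H^*(B\SO(2n)\langle n \rangle;\bQ) = \bQ[e, p_{\lceil (n+1)/4\rceil}, \dots, p_{n-1}].$$
The theorem then follows by assembling these pieces.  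The points where one needs to be a little careful are the sphericality check, which is what unlocks the improved range $(g-3)/2$ rather than $(g-4)/3$, and the identification of the Moore--Postnikov stage via parallelisability; everything else is essentially bookkeeping once the main theorems are in hand.
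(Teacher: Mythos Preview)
Your proposal is correct and follows essentially the same route as the paper: identify the Moore--Postnikov $n$-stage with the $n$-connected cover via parallelisability of $W_{g,1}$, invoke Lemma~\ref{lem:aHutContr} to make $\hAut$ contractible, apply Theorem~\ref{thm:main-with-boundary}, and then read off the rational cohomology via Remark~\ref{rem:RatCalc}. Your explicit verification of sphericality (needed for the improved range $(g-3)/2$) is a point the paper leaves implicit, so if anything your write-up is slightly more complete.
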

This is \cite[Corollary 1.8]{GR-W3}. As we mentioned above, for $2n=2$ the same statement (with a slightly different range) was earlier proved by Madsen and Weiss.

\subsection{The manifolds $W_g$}\label{sec:WgRat}
For $2n=2$ it is a theorem of Harer \cite{H} that the map
\begin{equation}\label{eq:CloseBdy}
\cM^\orientationpreserving(W_{g,1}) \lra \cM^\orientationpreserving(W_{g})
\end{equation}
given by attaching a disk induces an isomorphism on homology in a stable range of degrees. On the other hand $\cM^\orientationpreserving(W_{g})$ is the homotopy type of the stack $\mathbf{M}_g$ of genus $g$ Riemann surfaces, and it is in this way that Madsen and Weiss' topological result determines $H^*(\mathbf{M}_g;\bQ)$ in a stable range.

In dimensions $2n \geq 6$ it is no longer true that the map \eqref{eq:CloseBdy} induces an isomorphism on homology in a stable range of degrees. In this section we shall explain why, by using the theory described above to calculate $H^*(\cM^\orientationpreserving(W_{g});\bQ)$ in a stable range.

Continuing to write 
$$\theta_n : B\OO(2n)\langle n\rangle \overset{\theta_n^\orientationpreserving}\lra B\SO(2n) \overset{\orientationpreserving}\lra B\OO(2n)$$
as in the previous section, this tangential structure is also the Moore--Postnikov $n$-stage of the map classifying the tangent bundle of $W_g$. By Theorem \ref{thm:general-str} there is a map
$$\alpha : \cM^\orientationpreserving(W_g) \lra (\Omega^\infty MT\theta_n) \hcoker \hAut(\theta^\orientationpreserving_n)$$
that induces an isomorphism on homology, onto the path component that it hits, in degrees $* \leq \tfrac{g-3}{2}$ as long as $2n \geq 6$. 

In order to calculate the rational cohomology of $\cM^\orientationpreserving(W_g)$ in a range of degrees we could try to calculate the rational cohomology of the relevant path component of $(\Omega^\infty MT\theta_n) \hcoker \hAut(\theta^\orientationpreserving_n)$, but instead we shall use the fibre sequence \eqref{eq:22}. Choosing a $\theta_n$-structure $\ell_{W_g}$ on $W_g$, this is a fibre sequence
\begin{equation}\label{eq:WgFibSeq}
\cM^{\theta_n}(W_g, \ell_{W_g}) \lra \cM^\orientationpreserving(W_g) \overset{\xi}\lra B(\hAut(\theta_n^\orientationpreserving)_{[W_g, \ell_{W_g}]}).
\end{equation}
As $\ell_{W_g} : W_g \to B\OO(2n)\langle n\rangle$ is $n$-connected, we may apply Theorem \ref{thm:main-cohomological}, giving that
$$\bQ[\kappa_c \,\,\vert\,\, c \in \mathcal{B}, |c| > 2n] \lra H^*(\cM^{\theta_n}(W_{g},\ell_{W_g});\bQ)$$
is an isomorphism in degrees $* \leq \frac{g-3}{2}$. All the classes $\kappa_c$ are defined on the total space $\cM^\orientationpreserving(W_g)$ of the fibration \eqref{eq:WgFibSeq}, which implies that this fibration satisfies the Leray--Hirsch property in the stable range. Thus the Leray--Hirsch map
\begin{equation}\label{eq:27}
\bQ[\kappa_c \,\,\vert\,\, c \in \mathcal{B}, |c| > 2n] \otimes H^*(B(\hAut(\theta_n^\orientationpreserving)_{[W_g, \ell_{W_g}]});\bQ) \lra H^*(\cM^\orientationpreserving(W_{g});\bQ)
\end{equation}
is an isomorphism in degrees $* \leq \frac{g-3}{2}$.

To complete this calculation we must calculate the rational cohomology of the space $B(\hAut(\theta_n^\orientationpreserving)_{[W_g, \ell_{W_g}]}$, and describe the map
$$\xi^* : H^*(B(\hAut(\theta_n^\orientationpreserving)_{[W_g, \ell_{W_g}]});\bQ) \lra H^*( \cM^\orientationpreserving(W_g);\bQ)$$
in terms that we understand.

\subsubsection{Identifying $\hAut(\theta^\orientationpreserving_n)$}

The map $\theta^\orientationpreserving_n : B\OO(2n)\langle n \rangle \to B\SO(2n)$ is a principal fibration for the group-like topological monoid $\SO[0,n-1]$, the truncation of $\SO$, so is classified by the map $B\SO(2n) \to B\SO \to B(\SO[0,n-1])$, and hence there is a homomorphism
$$\iota : \SO[0,n-1] \lra \hAut(\theta^\orientationpreserving_n)$$
given by the principal group action.

\begin{lemma}\label{lem:WgIdhAut}
The map $\iota$ is a weak homotopy equivalence.
\end{lemma}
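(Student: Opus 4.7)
The plan is to identify $\hAut(\theta_n^\orientationpreserving)$ with $\SO[0,n-1]$ via a two-step manoeuvre: first recognise the mapping space of self-equivalences over $B\SO(2n)$ using the principal fibration structure, then use a connectivity--truncation count to collapse the result to $\SO[0,n-1]$.

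Write $E = B\OO(2n)\langle n \rangle$, $B = B\SO(2n)$, $G = \SO[0,n-1]$, and $p = \theta_n^\orientationpreserving$ for brevity. First I would observe that every self-map $f : E \to E$ over $B$ is automatically a weak equivalence. Since $E$ is $n$-connected, $f_* = \id$ on $\pi_i(E)$ for $i \leq n$ holds trivially, while for $i > n$ the relation $p \circ f \simeq p$ combined with the fact that $p_* : \pi_i(E) \to \pi_i(B)$ is an isomorphism forces $f_* = \id$. Thus $\hAut(p) = \Map_{/B}(E,E)$, and it suffices to compute this mapping space and check that $\iota$ lands in it as a weak equivalence.

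Next I would exploit that $p$ is a principal $G$-fibration, giving a canonical weak equivalence $E \times^h_B E \simeq E \times G$ (with the second factor thought of as $\Omega BG$). Composing $(\id, f) : E \to E \times^h_B E$ with the projection to $G$ yields a translation function $\sigma_f : E \to G$ characterised by $f(x) = x \cdot \sigma_f(x)$, and this construction provides a weak equivalence
\begin{equation*}
\Map_{/B}(E, E) \xrightarrow{\simeq} \Map(E, G).
\end{equation*}
Under this identification, the principal action $\iota : G \to \hAut(p)$ --- which sends $g \in G$ to the self-map $x \mapsto x \cdot g$ --- corresponds exactly to the inclusion of $G$ into $\Map(E, G)$ as the subspace of constant maps.

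Finally, a connectivity--truncation dichotomy closes the argument: $E$ is $n$-connected while $G = \SO[0,n-1]$ is by construction an $(n-1)$-type, so every map $E \to G$ factors uniquely up to homotopy through the Postnikov truncation $\tau_{\leq n-1}(E) \simeq \ast$. Concretely, evaluation at any basepoint induces a weak equivalence $\Map(E, G) \xrightarrow{\simeq} G$ which is a retraction of the constant-map inclusion, so that inclusion is itself a weak equivalence. Chaining the identifications yields that $\iota$ is a weak equivalence. The main technical point to verify carefully is the equivalence in Step 2 --- that the translation construction genuinely identifies $\iota$ with the inclusion of constant maps. This is essentially bookkeeping once one fixes a suitable point-set (e.g.\ simplicial) model of the principal $G$-fibration, but it is where the care in the argument is concentrated; the connectivity step and the first step are routine.
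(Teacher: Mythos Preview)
Your argument is correct and follows essentially the same approach as the paper: both use the principal fibration structure together with the connectivity/truncation mismatch between $E = B\OO(2n)\langle n\rangle$ (which is $n$-connected) and $G = \SO[0,n-1]$ (which is an $(n-1)$-type). The paper's version is packaged slightly differently --- it defines an evaluation-at-a-basepoint map $ev: \hAut(\theta_n^\orientationpreserving) \to \SO[0,n-1]$, checks that $ev \circ \iota = \id$, and then shows $ev$ is a weak equivalence by a direct obstruction-theory argument on the relevant relative lifting problem, with the obstructions lying in $\widetilde{H}^{i-k}(E;\pi_{i+1}(B\SO(2n),E))$ and vanishing for the same two reasons you give --- whereas you insert the explicit intermediate identification $\Map_{/B}(E,E) \simeq \Map(E,G)$ via translation functions; but these are the same argument underneath.
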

\begin{proof}
If we fix a basepoint $* \in B\OO(2n)\langle n \rangle$, which identifies the fibre through $*$ with $\SO[0,n-1]$, then there is a map
\begin{align*}
ev: \hAut(\theta^\orientationpreserving_n) &\lra \SO[0,n-1]\\
\varphi & \longmapsto \varphi(*),
\end{align*}
and it is clear that $ev \circ \iota$ is the identity. Thus it is enough to show that $ev$ is a weak homotopy equivalence. Suppose we are given a map $(f, g): (D^{k+1}, S^k) \to (\SO[0,n-1], \hAut(\theta_n^\orientationpreserving))$. This determines a relative lifting problem
\begin{equation*}
\begin{tikzcd} 
(S^k \times B\OO(2n)\langle n \rangle) \cup_{S^k \times \{*\}} (D^{k+1} \times \{*\}) \rar{g \cup f} \dar&  B\OO(2n)\langle n \rangle \dar{\theta_n^\orientationpreserving}\\
D^{k+1} \times B\OO(2n)\langle n \rangle \rar{\theta_n^\orientationpreserving \circ \pi_2} & B\SO(2n)
\end{tikzcd} 
\end{equation*}
and finding a nullhomotopy of $(f, g)$ is the same as solving this relative lifting problem. The obstructions for doing so lie in the groups
$$\widetilde{H}^{i-k}(B\OO(2n)\langle n \rangle; \pi_{i+1}(B\SO(2n), B\OO(2n)\langle n \rangle)),$$
but these groups are zero if $i-k \leq n$ or if $i \geq n$, so always vanish.
\end{proof}

In particular, the submonoid $\hAut(\theta_n^\orientationpreserving)_{[W_g, \ell_{W_g}]} \leq \hAut(\theta_n^\orientationpreserving)$ is in fact the whole of $\hAut(\theta^\orientationpreserving_n)$, so we may identify the cohomology of its classifying space with
\begin{equation}\label{eq:29}
H^*(B\hAut(\theta^\orientationpreserving_n);\bQ) = H^*(B\SO[0,n];\bQ) = \bQ[p_1, p_2, \ldots, p_{\lfloor \frac{n}{4}\rfloor}].
\end{equation}

\subsubsection{Miller--Morita--Mumford class interpretation}

Combining~(\ref{eq:27}) and~(\ref{eq:29}) gives a formula for $H^*(\cM^\orientationpreserving(W_g);\bQ)$ in a range of degrees.  In fact the classes obtained by pulling back $p_1, p_2, \ldots, p_{\lfloor \frac{n}{4}\rfloor}$ along the map
$$\xi  : \cM^\orientationpreserving(W_g) \lra  B\hAut(\theta^\orientationpreserving_n)$$
may be re-interpreted as Miller--Morita--Mumford classes.  We shall use the following lemma to explain this.

\begin{lemma}\label{lem:VTangDescends}
Let $\pi^\orientationpreserving: \cE^\orientationpreserving(W_g) \to \cM^\orientationpreserving(W_g)$ denote the the path component of the fibration \eqref{eq:25} modelling the universal oriented $W_g$-bundle, and $\tau : \cE^\orientationpreserving(W_g) \to B\SO(2n)$ denote the map classifying the vertical tangent bundle. Then the square
\begin{equation}\label{eq:VTangDescends}
\begin{gathered}
\begin{tikzcd} 
\cE^\orientationpreserving(W_g) \arrow[rr,"\tau"] \dar{\pi^\orientationpreserving} & & B\SO(2n) \dar \\
\cM^\orientationpreserving(W_g) \rar{\xi} & B\SO[0,n] & B\SO(2n)[0,n] \lar[']{\simeq}
\end{tikzcd}
\end{gathered}
\end{equation}
commutes up to homotopy.
\end{lemma}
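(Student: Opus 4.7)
The plan is to realise both routes in~(\ref{eq:VTangDescends}) as classifying maps of principal $\SO[0,n-1]$-bundles on $\cE^\orientationpreserving(W_g)$ and show that the two bundles are naturally equivalent. The geometric input is that the evaluation of a $\theta_n$-structure on $W_g$ at a single point already captures all its homotopical information, because $W_g$ is $(n-1)$-connected and the fibre $\SO[0,n-1]$ of $\theta_n^\orientationpreserving$ is $(n-1)$-truncated.

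By Lemma~\ref{lem:WgIdhAut} the map $\xi$ identifies with the classifying map of the principal $\hAut(\theta_n^\orientationpreserving) \simeq \SO[0,n-1]$-fibration $\cM^{\theta_n}(W_g,\ell_{W_g}) \to \cM^\orientationpreserving(W_g)$ from~(\ref{eq:WgFibSeq}), taking values in $B\hAut(\theta_n^\orientationpreserving) \simeq B\SO[0,n]$. Meanwhile $\theta_n^\orientationpreserving : B\SO(2n)\langle n\rangle \to B\SO(2n)$ is itself a principal $\SO[0,n-1]$-fibration, classified by some map $B\SO(2n) \to B\SO[0,n]$; since its target is $n$-truncated, this map factors canonically as the Postnikov truncation $B\SO(2n) \to B\SO(2n)[0,n]$ followed by the equivalence $B\SO(2n)[0,n] \overset{\simeq}\to B\SO[0,n]$ that appears on the right of~(\ref{eq:VTangDescends}). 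Hence it suffices to show that the two principal $\SO[0,n-1]$-bundles $(\pi^\orientationpreserving)^*\cM^{\theta_n}(W_g,\ell_{W_g})$ and $\tau^*B\SO(2n)\langle n\rangle$ on $\cE^\orientationpreserving(W_g)$ are equivalent.

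The first is naturally identified with $\cE^{\theta_n}(W_g,\ell_{W_g}) \to \cE^\orientationpreserving(W_g)$, because the construction of the universal total space in \S\ref{sec:classifying-spaces} is functorial in the $\GL_{2n}(\R)$-space and yields a homotopy cartesian square relating $\cE^{\theta_n}$, $\cM^{\theta_n}$, $\cE^\orientationpreserving$, and $\cM^\orientationpreserving$. A natural comparison map
$$ev : \cE^{\theta_n}(W_g,\ell_{W_g}) \lra \tau^* B\SO(2n)\langle n\rangle$$
over $\cE^\orientationpreserving(W_g)$ is then defined by evaluating the tautological $\theta_n$-lift at the marked point: a point $(\pi : E \to X, \hat\ell, e)$ is sent to $((\pi,e), \hat\ell(e))$.

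The main remaining step is to verify that $ev$ is a fibrewise weak equivalence. Over a point of $\cE^\orientationpreserving$ represented by $(W_g, e)$, the two fibres are the space of lifts of $\tau_{W_g}$ to $B\SO(2n)\langle n\rangle$ and the homotopy fibre of $\theta_n^\orientationpreserving$ over $\tau(e)$; these are torsors under $\mathrm{map}(W_g, \SO[0,n-1])$ and $\SO[0,n-1]$ respectively, and $ev$ is equivariant for the evaluation-at-$e$ homomorphism between them. A Postnikov-tower argument shows this homomorphism to be a weak equivalence: $\SO[0,n-1]$ is assembled from stages $K(\pi_i \SO, i)$ with $i \leq n-1$, and $H^j(W_g; \pi_i \SO) = 0$ for $1 \leq j \leq i$ since $W_g$ is $(n-1)$-connected and $i \leq n-1$, making $\mathrm{map}(W_g, K(\pi_i \SO, i)) \to K(\pi_i \SO, i)$ a weak equivalence at each stage. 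This connectivity--truncation input is the main technical obstacle; once it is in hand the equivalence of the two bundles follows, and the homotopy commutativity of~(\ref{eq:VTangDescends}) is established.
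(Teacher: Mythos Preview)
Your proof is correct but takes a different route from the paper's. The paper instead writes down the trivially commutative square
\[\begin{tikzcd}
\cE^{\theta_n}(W_g,\ell_{W_g}) \dar{\pi^{\theta_n}} \rar{\ell}& B\OO(2n)\langle n \rangle \dar\\
\cM^{\theta_n}(W_g, \ell_{W_g}) \rar& *
\end{tikzcd}\]
of $\hAut(\theta^\orientationpreserving_n)$-spaces and passes to homotopy orbits; the identifications of the resulting corners (using the framework of \S\ref{sec:two-fiber-sequences} for the left column and the principal structure on $\theta_n^\orientationpreserving$ for the right) then yield~(\ref{eq:VTangDescends}) directly. This is shorter and avoids your explicit map $ev$ and its Postnikov-tower verification: the $(n-1)$-connectivity of $W_g$ is not invoked again, only the already-established fact that $\ell_{W_g}$ is $n$-connected. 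Your approach, by contrast, makes the underlying geometry---that evaluating a $\theta_n$-structure at a single point loses nothing when the manifold is highly connected---transparent, and does not lean on the homotopy-orbit machinery of~(\ref{eq:21}).
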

\begin{proof}
  Let $\pi^{\theta_n} : \cE^{\theta_n}(W_g,\ell_{W_g}) \to \cM^{\theta_n}(W_g, \ell_{W_g})$ denote the path component of the fibration modelling the universal $W_g$-bundle with $\theta_n$-structure, which as in \eqref{eq:25}  comes with maps
$$\tau : \cE^{\theta_n}(W_g,\ell_{W_g}) \overset{\ell}\lra B\OO(2n)\langle n \rangle \overset{\theta_n^\orientationpreserving}\lra B\SO(2n)$$
whose composition classifies the (oriented) vertical tangent bundle. This gives a commutative square
\[\begin{tikzcd} 
\cE^{\theta_n}(W_g,\ell_{W_g}) \dar{\pi^{\theta_n}} \rar{\ell}& B\OO(2n)\langle n \rangle \dar\\
\cM^{\theta_n}(W_g, \ell_{W_g}) \rar& *
\end{tikzcd}\]
of $\hAut(\theta^\orientationpreserving_n)$-spaces and $\hAut(\theta^\orientationpreserving_n)$-equivariant maps. Taking homotopy orbits, and replacing the spaces at each corner with homotopy equivalent models, we obtain the homotopy commutative square
\[\begin{tikzcd} 
\cE^\orientationpreserving(W_g) \dar{\pi^\orientationpreserving} \rar{\tau}& B\SO(2n) \dar\\
\cM^\orientationpreserving(W_g) \rar{\xi}& B\SO[0,n].
\end{tikzcd}\]
Here the right-hand map is the truncation $B\SO(2n) \to B\SO(2n)[0,n]$ followed by the identification $B\SO(2n)[0,n] \overset{\sim}\to B\SO[0,n]$, as required.
\end{proof}

Now we may calculate as follows: by this lemma we have
$$(\pi^\orientationpreserving)^* \xi^*(p_i) = \tau^*(p_i),$$
and so by the projection formula (and commutativity of the cup product)
$$\kappa_{ep_i} = \int_{\pi^\orientationpreserving} \tau^*(e \cdot p_i) = \left(\int_{\pi^\orientationpreserving} \tau^*e \right) \cdot \xi^*(p_i) = \chi(W_g) \cdot \xi^*(p_i)$$
and hence, for $\chi(W_g) = 2 + (-1)^n 2g \neq 0$, we have $\xi^*(p_i) = \frac{1}{\chi(W_g)} \kappa_{ep_i}$.

Combined with the previous discussion, we obtain the following.

\begin{theorem}\label{thm:WgRat}
For $2n \geq 6$, let $\mathcal{B}$ denote the set of monomials in the classes $e$, $p_{n-1}$, $p_{n-2}$, \ldots, $p_{\lceil \frac{n+1}{4}\rceil}$, and $\mathcal{C}$ denote the set of the remaining Pontryagin classes $p_1, p_2, \ldots, p_{\lfloor \frac{n}{4}\rfloor}$. Then the map
$$\bQ[\kappa_c \,\,\vert\,\, c \in (\mathcal{B} \sqcup e\cdot\mathcal{C}), |c|>2n] \lra H^*(\cM^\orientationpreserving(W_g);\bQ)$$
is an isomorphism in degrees $* \leq (g-3)/2$.
\end{theorem}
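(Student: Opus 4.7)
The plan is to combine Theorem~\ref{thm:main-cohomological} applied to the $n$-connected structure $\ell_{W_g}: W_g \to B\OO(2n)\langle n\rangle$, the identification of $\hAut(\theta_n^\orientationpreserving)$ from Lemma~\ref{lem:WgIdhAut}, and the MMM-class reinterpretation of Pontryagin classes from Lemma~\ref{lem:VTangDescends}, by a Leray--Hirsch argument applied to the fibre sequence~\eqref{eq:WgFibSeq}.

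First, I would verify the hypotheses of Theorem~\ref{thm:main-cohomological} for $(W_g, \ell_{W_g})$ with the spherical structure $\theta_n$ (sphericality holds because $TS^{2n}$ is stably trivial, so its classifying map lifts to the $n$-connected cover). This gives
\begin{equation*}
\bQ[\kappa_c \mid c \in \mathcal{B}, |c| > 2n] \xrightarrow{\;\sim\;} H^*(\cM^{\theta_n}(W_g, \ell_{W_g}); \bQ)
\end{equation*}
in degrees $\leq (g-3)/2$, where $g(W_g,\ell_{W_g}) = g$. The crucial point is that each universal MMM-class $\kappa_c$ is actually defined on the \emph{total} space $\cM^\orientationpreserving(W_g)$ of~\eqref{eq:WgFibSeq}, by the push--pull formula of \S\ref{sec:char-class-1} applied to the oriented vertical tangent bundle. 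Hence each fibre generator extends globally, and the Leray--Hirsch theorem applies in the stable range to yield the isomorphism~\eqref{eq:27}.

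Next, using Lemma~\ref{lem:WgIdhAut} and~\eqref{eq:29} the base cohomology is identified as $H^*(B\hAut(\theta_n^\orientationpreserving);\bQ) = \bQ[p_1, \ldots, p_{\lfloor n/4\rfloor}]$, and I would reinterpret its generators $\xi^*(p_i)$ as MMM-classes. Lemma~\ref{lem:VTangDescends} gives $(\pi^\orientationpreserving)^*\xi^*(p_i) = \tau^*(p_i)$, so since $\tau^* e$ restricts fibrewise to the Euler class, the projection formula yields
\begin{equation*}
\kappa_{ep_i} \;=\; \int_{\pi^\orientationpreserving} \tau^*(e \cdot p_i) \;=\; \chi(W_g)\cdot \xi^*(p_i).
\end{equation*}
Since $\chi(W_g) = 2 + (-1)^n 2g \neq 0$ for $g\geq 3$ (automatic in the stable range), we recover $\xi^*(p_i) = \kappa_{ep_i}/\chi(W_g)$ as a rational MMM-class.

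Combining these, the Leray--Hirsch isomorphism~\eqref{eq:27} becomes an isomorphism from the tensor product $\bQ[\kappa_c \mid c \in \mathcal{B}, |c| > 2n] \otimes \bQ[\kappa_{ep_1}, \ldots, \kappa_{ep_{\lfloor n/4\rfloor}}]$ onto $H^*(\cM^\orientationpreserving(W_g);\bQ)$ in the stable range, and this tensor product is precisely the polynomial ring on $\kappa_c$ for $c \in \mathcal{B}\sqcup e\cdot\mathcal{C}$ with $|c|>2n$. I expect the main obstacle to be the Leray--Hirsch hypothesis—namely, that the globally-defined $\kappa_c$ for $c \in \mathcal{B}$ restrict to a \emph{basis} of fibre cohomology in the range—but this is essentially the content of Theorem~\ref{thm:main-cohomological} applied to the fibre, so the remaining work reduces to the formal manipulation of MMM-classes above together with the non-vanishing of $\chi(W_g)$.
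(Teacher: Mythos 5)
Your proposal follows essentially the same route as the paper: apply Theorem~\ref{thm:main-cohomological} to the fibre of~\eqref{eq:WgFibSeq}, observe the $\kappa_c$ extend to the total space so Leray--Hirsch gives~\eqref{eq:27} in the stable range, identify the base via Lemma~\ref{lem:WgIdhAut}, and reinterpret $\xi^*(p_i)$ as $\kappa_{ep_i}/\chi(W_g)$ using Lemma~\ref{lem:VTangDescends} and the projection formula. The only (implicit) step worth making explicit is that $\hAut(\theta_n^\orientationpreserving)_{[W_g,\ell_{W_g}]}$ is all of $\hAut(\theta_n^\orientationpreserving)$, which follows from Lemma~\ref{lem:WgIdhAut} since $\SO[0,n-1]$ is connected.
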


For $2n=2$ the same statement (with a slightly different range) holds by the theorem of Madsen and Weiss, and in this case the set $\mathcal{C}$ is empty and the result is the same as that of Theorem \ref{thm:Wg1Rat}. For $2n \geq 6$ we have $\hAut(\theta^\orientationpreserving_n) \simeq \SO[0,n-1] \not\simeq *$ and so it follows from the discussion in this section that the map \eqref{eq:CloseBdy} is \emph{not} an isomorphism on integral cohomology in any range of degrees (though for $2n=6$ it is still an isomorphism on rational cohomology in a stable range, as $\SO[0,2] \simeq K(\bZ/2,1)$ is rationally acyclic; in Theorem \ref{thm:WgRat} this corresponds to the fact that $\mathcal{C}$ is empty in this case).

\begin{remark}
In the statement of Theorem \ref{thm:WgRat} we do \emph{not} assert that $\kappa_c=0$ for monomials $c \nin \mathcal{B} \sqcup e\cdot\mathcal{C}$. Indeed a further consequence of the homotopy commutativity of \eqref{eq:VTangDescends} is the following description of $\kappa_c$ for a general monomial $c=e^i\cdot p_1^{j_1} \cdots p_{n-1}^{j_{n-1}}$ in terms of the generators of Theorem \ref{thm:WgRat}:
$$\kappa_c = \left(\frac{\kappa_{e\cdot p_1}}{\chi(W_g)}\right)^{j_1} \cdot \left(\frac{\kappa_{e\cdot p_2}}{\chi(W_g)}\right)^{j_2} \cdots \left(\frac{\kappa_{e\cdot p_k}}{\chi(W_g)}\right)^{j_k} \cdot \kappa_{\left(e^i \cdot p_{k+1}^{k_{k+1}} \cdots p_{n-1}^{j_{n-1}}\right)},$$
where we write $k = {\lfloor \frac{n}{4}\rfloor}$. This follows immediately from the observation that $p_i(T_\pi) = \pi^*\xi^*(p_i) = \pi^*(\frac{\kappa_{e\cdot p_i}}{\chi(W_g)})$ for $i \leq k$.
\end{remark}

\subsection{Hypersurfaces in $\bC\bP^4$}\label{sec:VdRat}

If $V \subset \bC\bP^{r+1}$ is a smooth hypersurface,\index{hypersurface} determined by a homogeneous complex polynomial of degree $d$, then it is an observation of Thom that its diffeomorphism type depends only on the degree $d$, and not on the particular polynomial: we call the resulting $2r$-manifold $V_{d}$.  As we shall explain in \S\ref{sec:genus-v_d}, these interesting manifolds tend to have large genus.  More generally, for smooth complete intersections\index{complete intersection} of such hypersurfaces the diffeomorphism type depends only on the degrees, and much is understood about the classification up to diffeomorphism of such manifolds in terms of these degrees, by Libgober and Wood \cite{LibgoberWood}, Kreck \cite{Kreck}, and others.

We shall explain how the theory described above applies in the non-trivial example of a hypersurface $V_d \subset \bC\bP^4$ of degree $d$, and determine a formula for the rational cohomology of $\cM^\orientationpreserving(V_d)$ in a range of degrees.  Let us start with outlining the steps again, and state the conclusions in this example.
\begin{enumerate}[(i)]
\item Determine the genus of $V_d$: it turns out to be $\frac{1}{2} (d^4-5d^3+10d^2-10d+4)$.
\item Determine the Moore--Postnikov 3-stage $V_d \xrightarrow{\ell_{V_d}} B_d \xrightarrow{\theta_d} B\SO(6)$ of a map classifying the oriented tangent bundle of $V_d$.
\item Calculate the ring $H^*(\cM^{\theta_d}(V_d,\ell_{V_d});\bQ)$ in the stable range. It turns out to be the $\bQ$-algebra
  \begin{equation}\label{eq:45}
    A = \bQ[\kappa_{t^n c} \mid \text{$c \in \mathcal{B}$, $n \geq 0$, $|c| + 2n > 6$}],
  \end{equation}
  where $\mathcal{B}$ is the set of monomials in classes $p_1$, $p_2$, and $e$ of degree $|p_1| = 4$, $|p_2| = 8$, and $|e| = 6$, and $t$ is a class of degree $2$.
\item Use the spectral sequence arising from Corollary~\ref{cor:general-str} to determine the cohomology of $\cM^\orientationpreserving(V_d)$ from that of $\cM^{\theta_d}(V_d, \ell_{V_d})$ in a stable range.  The result is a short exact sequence
  \begin{equation}\label{eq:23}
    0 \lra H^*(\cM^\orientationpreserving(V_d);\bQ) \lra A \overset{d_3}\lra A \lra 0,
  \end{equation}
  where $d_3: A \to A$ is the unique derivation satisfying $d_3(\kappa_{t^n c}) = n \kappa_{t^{n-1} c}$.  (The result is a scalar when $|t^{n-1} c| = 6$; this scalar is a characteristic number of $V_d$ and therefore the derivation $d_3$ depends on the degree $d$.)
\end{enumerate}

\subsubsection{Algebraic topology of $V_{d}$}
By the Lefschetz hyperplane theorem the inclusion $i: V_{d} \to \bC\bP^{4}$ is 3-connected. This first implies that $V_{d}$ is simply-connected. Writing $H^*(\bC\bP^{4};\bZ) = \bZ[x]/(x^{5})$ for $x=c_1(\Oo(1))$ and $t=i^*(x)$, we have
$$H^0(V_{d};\bZ) = \bZ \quad\quad H^1(V_{d};\bZ)=0 \quad\quad H^2(V_{d};\bZ) = \bZ\{t\}$$
and hence by Poincar{\'e} duality we have
$$H^4(V_{d};\bZ) = \bZ\{s\} \quad\quad H^5(V_{d};\bZ)=0 \quad\quad H^6(V_{d};\bZ) = \bZ\{u\}$$
where $\langle [V_{d}], u \rangle=1$ and $s \cdot t = u$. We also have $\langle i_*[V_{d}], x^3 \rangle=d$, obtained by intersecting $V_{d}$ with a generic $\bC\bP^1 \subset \bC\bP^{4}$, giving $t^3 = d \cdot u$ and hence $t^2 = d \cdot s$. By definition, $V_{d}$ is the zero locus of a transverse section of $\Oo(d) \to \bC\bP^{4}$, so its normal bundle in $\bC\bP^{4}$ is $i^*(\Oo(d))$ and hence as complex vector bundles we have
$$TV_{d} \oplus i^*(\Oo(d)) \oplus \underline{\bC} = i^*(T\bC\bP^{4}) \oplus \underline{\bC} =  i^*(\Oo(1))^{\oplus 5}$$
so taking total Chern classes yields $c(TV_{d}) = i^*\left(\frac{(1+x)^{5}}{(1+d x)}\right)$. We can therefore extract
\begin{align*}
c_3(TV_{d}) &= \left(\binom{5}{3} - \binom{5}{2}d + \binom{5}{1}d^2 -d^3\right)t^3
\end{align*}
and so compute the Euler characteristic of $V_{d}$ as $\langle [V_{d}], c_3(TV_{d}) \rangle$ to be
$$\chi(V_{d})  = d \cdot(10-10d+5d^2-d^3).$$
We therefore find that $H^3(V_{d};\bZ)$ is free of rank $4-\chi(V_{d}) = d^4-5d^3+10d^2-10d+4$, which finishes our calculation of the cohomology of $V_{d}$.

For later use we record two further characteristic classes of $V_{d}$, namely
\begin{align*}
w_2(TV_{d}) &= (5-d)t \mod 2\\
p_1(TV_{d}) &= (5-d^2)t^2,
\end{align*}
obtained from the identities $w_2 \equiv c_1 \mod 2$ and $p_1 = c_1^2-2c_2$ among characteristic classes of complex vector bundles.

\subsubsection{Genus of $V_{d}$}\label{sec:genus-v_d}
The genus of $V_d$ may be estimated from below by the methods of \S\ref{sec:disc-moduli-spac}, but in this case it turns out that an exact formula is possible.
It is a theorem of Wall \cite{WallClassV} that any simply-connected smooth 6-manifold $W$ has a decomposition $W \cong M \# g(S^3 \times S^3)$ with $H_3(M;\bZ)=0$, and so the genus of such a $W$ is given by half its third Betti number. Thus we have
$$g(V_{d}) = \frac{1}{2} (d^4-5d^3+10d^2-10d+4).$$
Similar formulae are known for higher dimensional smooth complex complete intersection varieties, see e.g.\ \cite{wood1975removing, morita1975kervaire, browder1979complete}.

\begin{remark}
More generally, if $\cL$ is an ample line bundle over a smooth projective complex manifold $M$ of complex dimension $n+1$ then for all $d \gg 0$ we may consider the smooth manifolds $U_d$ arising as the zeroes of generic holomorphic sections of $\cL^{\otimes d}$. Writing $x := c_1(\cL)$, as $\cL$ is ample we have $N := \int_M x^{n+1} \neq 0$. Writing $i : U_d \hookrightarrow M$ for the inclusion, and using that $i_*[U_d]$ is Poincar{\'e} dual to $e(\cL^{\otimes d}) = dx$, it follows that $\int_{U_d} i^*(x)^n = d \cdot N \neq 0$. The analogue of the calculation above gives that $c(TU_d) = i^*(\frac{c(TM)}{(1+dx)})$ and hence we have $\chi(U_d) = (-1)^n d^{n+1} N +O(d^n)$ and so $b_n = d^{n+1} N +O(d^n)$. If $n$ is odd then by the discussion in \S\ref{sec:disc-moduli-spac} we  have
$$g(U_d) = \frac{1}{2} d^{n+1} N +O(d^n).$$
 If $n$ is even, then the analogous calculation with the total Hirzebruch $\cL$-class gives that $\cL(TU_d) = i^*(\frac{\cL(TM)}{dx/\tanh(dx)})$ so $ \sigma(U_d) = \frac{2^{n+2}(2^{n+2}-1) B_{n+2}}{(n+2)!} d^{n+1} N +O(d^n)$, where $B_i$ denote the Bernoulli numbers. Hence, by the discussion in \S\ref{sec:disc-moduli-spac}, we have
 $$g(U_d) = \frac{1}{2}\left(1-\frac{2^{n+2}(2^{n+2}-1) |B_{n+2}|}{(n+2)!}\right) d^{n+1} N +O(d^n).$$
 The term $\frac{2^{n+2}(2^{n+2}-1) |B_{n+2}|}{(n+2)!}$ does not matter much for large $n$: the fact that the Taylor series for $\tanh(z)$ has convergence radius $\pi/2$ implies that that term is asymptotically smaller than $(2/\pi)^{n+\epsilon}$ as $n \to \infty$, for any $\epsilon > 0$; in particular it quickly becomes much smaller than 1.  In the relevant cases $n \geq 4$ it is at most $2/15$.
\end{remark}

\subsubsection{Moore--Postnikov 3-stage of $V_{d}$}
Let us write
$$\tau: V_{d} \overset{\ell_{V_d}}\lra B_{d} \overset{\theta_{d}}\lra B\SO(6)$$
for the Moore--Postnikov 3-stage of a map $\tau$ classifying the oriented tangent bundle of $V_{d}$, so $\ell_{V_d}$ is 3-connected and $\theta_{d}$ is 3-co-connected. From this we easily calculate the homotopy groups of $B_{d}$, as
\begin{align*}
0=\pi_1(V_{d}) \overset{\sim}\lra &\pi_1(B_{d})\\
\bZ=\pi_2(V_{d}) \overset{\sim}\lra &\pi_2(B_{d})\\
 &\pi_3(B_{d}) \overset{\sim}\lra \pi_3(B\SO(6))=0\\
 &\pi_i(B_{d}) \overset{\sim}\lra \pi_i(B\SO(6)) \text{ for all } i \geq 4.
\end{align*}
To understand the map $\theta_{d}$ on homotopy groups, it remains to understand the composition
\begin{equation}\label{eq:2ndSWVd}
\tau_* : \bZ=\pi_2(V_{d}) \overset{\sim}\lra \pi_2(B_{d}) \lra \pi_2(B\SO(6)) = \bZ/2.
\end{equation}
The latter group is detected by the Stiefel--Whitney class $w_2$, so this map is non-zero if and only if the class $w_2(TV_{d}) \in H^2(V_{d};\bZ/2) \cong \mathrm{Hom}(\pi_2(V_{d}), \bZ/2)$ is non-zero. We have seen that $w_2(TV_{d}) = (5-d)t$, so \eqref{eq:2ndSWVd} is surjective if and only if $d$ is even.

Let us abuse notation by writing ${t} \in H^2(B_{d};\bZ)$ for the unique class which pulls back to $t$ along $\ell_{V_d}$. If $d$ is even, then ${t}$ satisfies ${t} \equiv w_2(\theta_{d}^*\gamma) \mod 2$.
Thus there is a $\mathrm{Spin^c}$-structure on the bundle $\theta_d^*\gamma$ with $c_1={t}$, and choosing one provides a commutative diagram
\begin{equation*}
\begin{tikzcd} 
B_{d} \arrow[rd, "\theta_{d}"] \arrow[rr, "f"]& & B\mathrm{Spin^c}(6) \arrow[ld]\\
 & B\mathrm{SO}(6).
\end{tikzcd}
\end{equation*}
It may be directly checked using the above calculations that the map $f$ induces an isomorphism on all homotopy groups, so is a weak equivalence (over $B\mathrm{SO}(6)$).

If $d$ is odd then we have $w_2(\theta_{d}^*\gamma)=0$, so we may choose a $\mathrm{Spin}$-structure on the bundle $\theta_{d}^*\gamma$, which provides a commutative diagram
\begin{equation*}
\begin{tikzcd} 
B_{d} \arrow[rd, "\theta_{d}"] \arrow[rr, "h"]& & B\mathrm{Spin}(6) \arrow[ld]\\
 & B\mathrm{SO}(6).
\end{tikzcd}
\end{equation*}
It may be directly checked using the above calculations that the map $h \times {t} : B_{d} \to B\mathrm{Spin}(6) \times K(\bZ, 2)$ induces an isomorphism on all homotopy groups, so is a weak equivalence  (over $B\mathrm{SO}(6)$).

In either case, the map
$$\theta_{d} \times t : B_{d} \lra B\mathrm{SO}(6) \times K(\bZ,2)$$
is a rational homotopy equivalence (over $B\mathrm{SO}(6)$), so we have
$$H^*(B_{d};\bQ) = \bQ[t, p_1, p_2, e].$$
Writing as in the previous examples $\cB$ for the set of monomials in $p_1$, $p_2$, and $e$, by Theorem \ref{thm:main-cohomological} the map
$$\bQ[\kappa_{t^i c} \, | \, c \in \cB, i \geq 0, |c|+2i > 6] \lra H^*(\mathcal{M}^{\theta_{d}}(V_{d},\ell_{V_d});\bQ)$$
is an isomorphism in degrees $* \leq \frac{d^4-5d^3+10d^2-10d+4}{4}$, establishing \eqref{eq:45}.

\subsubsection{Change of tangential structure}\label{sec:ChangeOfStr}

We wish to use the above to compute the rational cohomology of $\mathcal{M}^\orientationpreserving(V_{d})$ in a range of degrees, so must analyse the forgetful map
$\mathcal{M}^{\theta_{d}}(V_{d}, \ell_{V_d}) \to \mathcal{M}^\orientationpreserving(V_{d})$. We shall do this in two stages, given by the maps of tangential structures
\begin{equation*}
\begin{tikzcd} 
B_{d} \arrow[rrd,',"\theta_{d}"] \arrow[rr, "u = \theta_{d} \times t"]& &  B\mathrm{SO}(6) \times K(\bZ,2) \arrow[d, "\mu"] \arrow[rr, "\mu"]& & B\mathrm{SO}(6) \arrow[lld, "\mathrm{Id}"] \\
 & & B\mathrm{SO}(6).
\end{tikzcd}
\end{equation*}

The space of $\theta_{d}$-structures on $V_{d}$ refining the $\mu$-structure $u \circ \ell_{V_d}$ is homotopy equivalent to the space of lifts
\begin{equation*}
\begin{tikzcd} 
 & B_{d} \arrow[d, "u"]\\
V_{d} \arrow{r}[swap]{u \circ \ell_{V_d}} \arrow[ru, dashed]& B\mathrm{SO}(6) \times K(\bZ,2),
\end{tikzcd}
\end{equation*}
and $\pi_0(\hAut(u))$ acts on the set of homotopy classes of such lifts. If $G \leq \hAut(u)$ is the submonoid of those path components that preserve the $\theta_d$-structure $\ell_{V_d}$ up to diffeomorphisms of $V_d$ preserving the $\mu$-structure $u \circ \ell_{V_d}$, then there is a fibration sequence
$$\mathcal{M}^{\theta_{d}}(V_{d}, \ell_{V_d}) \lra \mathcal{M}^{\mu}(V_{d}, u \circ \ell_{V_d}) \lra BG.$$
By the discussion in Remark \ref{rem:Kreck}, $G = \hAut(u)_{[V_d, \ell_{V_d}]}$ as long as $g(V_d,\ell_{V_d}) \geq 3$, and this fibration sequence is an instance of \eqref{eq:22}.

As we have seen above, the map $u$ is a rational homotopy equivalence, and it is immediate from this that $\pi_i(\hAut(u)) \otimes \bQ=0$ for $i>0$, so $G$ has no higher rational homotopy groups. 

We claim that $\pi_0(G)$ is also trivial, and in fact we shall show that $\pi_0(\hAut(u))$ is trivial (so $G=\hAut(u)$). To see this, let $\phi \in \hAut(u)$, and we must then show that the following lifting problem admits a solution:
\begin{equation*}
\begin{tikzcd} 
 {\partial [0,1]} \times B_{d} \arrow[rr, "\mathrm{Id} \sqcup \phi"] \arrow[d]& & B_{d} \arrow[d, "u"]\\
 {[0,1]} \times B_{d} \arrow[r, "proj"] \arrow[rru, dashed] & B_{d} \arrow[r, "u"]& B\mathrm{SO}(6) \times K(\bZ,2).
\end{tikzcd}
\end{equation*}
By consideration of the cases $B_{d} = B\mathrm{Spin^c}(6)$ and $B_{d} = B\mathrm{Spin}(6) \times K(\bZ,2)$, we see that the homotopy fibre of $u$ is a $K(\bZ/2,1)$, so there is a unique obstruction to finding the required lift, lying in 
$$H^2([0,1] \times B_{d}, \partial [0,1] \times B_{d};\bZ/2) \cong {H}^1(B_{d};\bZ/2)=0.$$
It follows that $BG$ is simply-connected and has trivial higher rational homotopy groups, so $\mathcal{M}^{\theta_{d}}(V_{d}, \ell_{V_d}) \to \mathcal{M}^{\mu}(V_{d}, u \circ \ell_{V_d})$ is a rational homotopy equivalence.

Analogously to the above, if $H \leq \hAut(\mu)$ is the submonoid of those path components that preserve the $\mu$-structure $u \circ \ell = \tau \times t$ up to orientation-preserving diffeomorphism of $V_d$, then there is a fibration sequence
$$\mathcal{M}^{\mu}(V_{d}, u \circ \ell_{V_d}) \lra \mathcal{M}^\orientationpreserving(V_{d}) \lra BH.$$
Again, by Remark \ref{rem:Kreck}, $H = \hAut(\mu)_{[V_d, u \circ \ell_{V_d}]}$ as long as $g(V_d, u \circ \ell_{V_d}) \geq 3$, and this fibration sequence is an instance of \eqref{eq:22}. As the fibration $\mu$ is trivial, we have
\begin{align*}
\hAut(\mu) &\simeq \map(B\mathrm{SO}(6), \hAut(K(\bZ,2)))\\
&\simeq \bZ^\times \ltimes \map(B\mathrm{SO}(6), K(\bZ,2))\\
&\simeq \bZ^\times \ltimes K(\bZ,2).
\end{align*}
The non-trivial path component of this monoid acts on $H^2(B\mathrm{SO}(6) \times K(\bZ,2);\bZ) = \bZ\{t\}$ as $-1$, but any orientation-preserving diffeomorphism of $V_d$ fixes $t^3 \in H^6(V_d;\bZ)$ so acts as $+1$ on $H^2(V_d;\bZ) = \bZ\{t\}$. Thus the non-trivial path component of $\hAut(\mu)$ does not lie in $H$, so $H \simeq K(\bZ,2)$. Thus the fibration sequence is of the form
$$\mathcal{M}^{\mu}(V_{d}, u \circ \ell_{V_d}) \lra \mathcal{M}^\orientationpreserving(V_{d}) \lra K(\bZ,3).$$
The Serre spectral sequence for this fibration, in rational cohomology, has two columns and so a single possible non-zero differential. In the stable range, using the above, it has the form
$$E_2^{*,*} = \Lambda[\iota_3] \otimes \bQ[\kappa_{t^i c} \, | \, c \in \cB, i \geq 0, |c|+2i > 6] \Longrightarrow H^*(\mathcal{M}^\orientationpreserving(V_{d});\bQ).$$
It remains to determine the $d_3$-differential, which by the Leibniz rule is done by the following lemma.

\begin{lemma}\label{lem:VdIdentifyDiff}
We have $d_3(\kappa_{t^n c}) = \iota_3 \otimes n \cdot \kappa_{t^{n-1} c}$.
\end{lemma}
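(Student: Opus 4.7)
The plan is to compute $d_3$ by working in a parallel Serre spectral sequence on the total space $\mathcal{E}^\mu$, where the calculation reduces to identifying the transgression of the universal $t$-class. Concretely, the pullback square
\[
\begin{tikzcd}
\mathcal{E}^\mu \arrow{r}{q_E} \arrow{d}{\pi^\mu} & \mathcal{E}^\orientationpreserving \arrow{d}{\pi^\orientationpreserving} \\
\mathcal{M}^\mu \arrow{r}{q} & \mathcal{M}^\orientationpreserving
\end{tikzcd}
\]
shows that composing with $\mathcal{M}^\orientationpreserving \to K(\bZ,3)$ yields a map of fibration sequences $\mathcal{E}^\mu \to \mathcal{E}^\orientationpreserving \to K(\bZ,3)$ and $\mathcal{M}^\mu \to \mathcal{M}^\orientationpreserving \to K(\bZ,3)$. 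I would first argue that fiber integration along $\pi^\mu$ induces a morphism between their rational Serre spectral sequences that commutes with all differentials, and in particular satisfies a projection formula for classes pulled back from the common base $K(\bZ, 3)$. This should be routine: it is a manifestation of the compatibility of fiber integration with the skeletal filtration of $K(\bZ,3)$, or equivalently with a parametrised Pontryagin--Thom construction along the $V_d$-bundle $\pi^\orientationpreserving$.

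The central geometric input is the identity $d_3(\bar t) = \iota_3$ in the Serre SS for $\mathcal{E}^\mu \to \mathcal{E}^\orientationpreserving \to K(\bZ,3)$, where $\bar t : \mathcal{E}^\mu \to K(\bZ,2)$ is the universal $t$-class. As recorded in \S\ref{sec:ChangeOfStr}, the subgroup $H \simeq K(\bZ,2) \subset \hAut(\mu)$ acts on $\mu$-structures by translating the $t$-coordinate. Hence $\bar t$ is $H$-equivariant, where $H$ acts on $K(\bZ,2)$ by translation, and taking homotopy orbits produces a map of fibrations over $K(\bZ,3)$ to the path-loop fibration $K(\bZ,2) \to \{*\} \to K(\bZ,3)$ (note $K(\bZ,2) \hcoker K(\bZ,2) \simeq *$ as the translation action is free). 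In the Serre SS of the path-loop fibration, $\iota_2$ transgresses to $\iota_3$; since $\bar t^*(\iota_2) = \bar t$, naturality of the transgression yields $d_3(\bar t) = \iota_3$ as required.

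With these two ingredients the calculation is formal. Any characteristic class $c$ of the vertical tangent bundle is defined on the oriented bundle $\pi^\orientationpreserving$ already, so it extends to $\mathcal{E}^\orientationpreserving$ and is therefore a permanent cycle. Since $\bar t$ has even degree, the multiplicativity of the Serre SS gives
\[
d_3(\bar t^n \cdot c) = n\, \bar t^{n-1}\cdot \iota_3 \cdot c.
\]
Applying $\int_{\pi^\mu}$, using the commutation with $d_3$ from the setup and the projection formula to pull $\iota_3$ (which lives on the base) out of the integral, one obtains
\[
d_3(\kappa_{t^n c}) = \int_{\pi^\mu}\bigl(n\, \bar t^{n-1}\cdot \iota_3 \cdot c\bigr) = n \iota_3 \cdot \int_{\pi^\mu}(\bar t^{n-1} c) = \iota_3 \otimes n\cdot \kappa_{t^{n-1} c},
\]
which is the stated formula.

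The main obstacle I anticipate is the first step: carefully justifying that fiber integration along $\pi^\mu$ is compatible with the Serre spectral sequence in the required sense (commuting with $d_3$ and satisfying the projection formula against classes from $K(\bZ,3)$). Although both properties are standard in spirit, formalising them cleanly may require either a Cech-cover model for the Serre SS of a fibration over $K(\bZ,3)$ or a parametrised spectra viewpoint. Once this compatibility is in hand the rest of the argument is transgression and Leibniz, and requires no further information about the specific manifold $V_d$.
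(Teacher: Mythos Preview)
Your argument is correct and takes a different route from the paper's. The paper works entirely at the moduli-space level: it uses the standard relationship between $d_3$ in the Borel fibration $F \to F\hcoker H \to BH$ and the action map $a \colon H \times F \to F$ (namely, $d_3(x) = \iota_3 \otimes y$ where $a^*(x) = 1 \otimes x + \iota_2 \otimes y + \cdots$), and then computes $a^*(\kappa_{t^n c})$ explicitly by identifying the bundle-with-$\mu$-structure classified by $a$ as the pulled-back universal bundle with $t$-coordinate shifted to $\iota_2 \otimes 1 + 1 \otimes t$, expanding $(\iota_2 + t)^n$ binomially inside the fibre integral, and reading off the $\iota_2$-coefficient as $n\,\kappa_{t^{n-1}c}$. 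You instead lift to the total space $\mathcal{E}^\mu$, reduce everything to the single transgression $d_3(\bar t) = \iota_3$ via comparison with the path--loop fibration, apply Leibniz there, and then push down. Both approaches rest on the same geometric input---the $K(\bZ,2)$-action shifts $t$ by $\iota_2$---but your decomposition makes the derivation structure $d_3 = \partial/\partial t$ manifest (anticipating the Remark following the lemma), at the cost of the fibre-integration/Serre-filtration compatibility you correctly flag as the main technical point. The paper's version sidesteps that step entirely, because the binomial expansion of $a^*$ effectively performs the Leibniz rule \emph{after} fibre-integrating rather than before, so no map of spectral sequences is ever needed.
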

\begin{proof}
We have $d_3(\kappa_{t^n c}) = \iota_3 \otimes x$ for some $x$. The action map
$$a: K(\bZ,2) \times \mathcal{M}^{\mu}(V_{d}, u \circ \ell_{V_d}) \lra \mathcal{M}^{\mu}(V_{d}, u \circ \ell_{V_d})$$
classifies the following data: the $V_{d}$-bundle
$$\pi : K(\bZ,2) \times \mathcal{E}^{\mu}(V_{d}, u \circ \ell_{V_d}) \to K(\bZ,2) \times \mathcal{M}^{\mu}(V_{d}, u \circ \ell_{V_d})$$
pulled back by projection to the second factor, equipped with the $\mu$-structure
$$K(\bZ,2) \times \mathcal{E}^{\mu}(V_{d}, u \circ \ell_{V_d}) \overset{\tau \times \tilde{t}}\lra B\mathrm{SO}(6) \times K(\bZ,2)$$
where $\tau$ is given by projection to $\mathcal{E}^{\mu}(V_{d}, u \circ \ell_{V_d})$ and its vertical tangent bundle, and $\tilde{t} = \iota_2 \otimes 1 + 1 \otimes t$.

The class $x$ is related to this action by the formula
$$a^*(\kappa_{t^n c}) = 1 \otimes \kappa_{t^n c} + \iota_2 \otimes x + \cdots.$$
Using the description above we calculate $a^*(\kappa_{t^n c})$ as
$$\pi_!((\iota_2 \otimes 1 + 1 \otimes t)^n \cdot \tau^*(c)) = \pi_!\left(\sum_{i=0}^n \binom{n}{i} \iota_2^{i} \otimes (t^{n-i} \cdot \tau^*c)\right)$$
and the K{\"u}nneth factor in $H^2(K(\bZ,2);\bZ) \otimes H^{|\kappa_{t^n c}|-2}(\mathcal{M}^{\mu}(V_{d}, u \circ \ell_{V_d});\bZ)$ is $\iota_2 \otimes (n \cdot \kappa_{t^{n-1} c})$. It follows that $x = n \cdot \kappa_{t^{n-1} c}$, as required.
\end{proof}

It follows from this lemma that the differential $d_3$ is a surjection from the first column to the third column, so that
$$H^*(\mathcal{M}^\orientationpreserving(V_{d});\bQ) \cong \mathrm{Ker}(d_3 \circlearrowright \bQ[\kappa_{t^n c} \, | \, c \in \cB, i \geq 0, |c|+2n > 6])$$
in degrees $* \leq \frac{d^4-5d^3+10d^2-10d+4}{4}$, establishing~(\ref{eq:23}).

It may at first appear that this ring does not depend on ${d}$, but this formula is to be understood carefully. If $|\kappa_{t^n c}|=2$ then $d_3(\kappa_{t^n c}) \in \bQ$ is a scalar, and must be evaluated: this is a \emph{boundary condition} for the derivation $d_3$, and is a characteristic number of $V_{d}$. The $\kappa_{t^n c}$ of degree 2 are given by the $t^i c$ of degree 8, so are $p_2$, $p_1^2$, $te$, $t^2 p_1$, and $t^4$, and these have
\begin{align*}
d_3(\kappa_{p_2}) &= 0\\
d_3(\kappa_{p_1^2}) &= 0\\
d_3(\kappa_{te}) &= \kappa_e = \chi(V_{d}) = d \cdot(10-10d+5d^2-d^3)\\
d_3(\kappa_{t^2 p_1}) &= 2 \kappa_{tp_1} = 2 d (5-d^2)\\
d_3(\kappa_{t^4}) &= 4 \kappa_{t^3} = 4\cdot d.
\end{align*}
For the penultimate one we use the calculation of the first Pontryagin class of $V_d$. As an example, $H^2(\mathcal{M}^\orientationpreserving(V_{d});\bQ)$ is 4-dimensional and is spanned by the classes
\begin{align*}
\kappa_{p_2}, \quad \kappa_{p_1^2}, \quad \kappa_{te} - \frac{10-10d+5d^2-d^3}{4}\kappa_{t^4}, \quad \text{ and } \quad  \kappa_{t^2 p_1} - \frac{5-d^2}{2}\kappa_{t^4}.
\end{align*}

\begin{remark}
In the Serre spectral sequence for the fibration $\pi: \mathcal{E}^\orientationpreserving(V_{d}) \to \mathcal{M}^\orientationpreserving(V_{d})$ modelling the universal oriented $V_g$-bundle as in \eqref{eq:25}, the class $t \in H^2(V_{d};\bQ) = E_2^{0,2}$ must be a permanent cycle. (This may be seen as the Euler class of the vertical tangent bundle $T_\pi\mathcal{E}^\orientationpreserving(V_{d})$ restricts to $e(TV_{d}) \in H^6(V_g;\bQ)$, so this must be a permanent cycle, and this is a non-zero multiple of $t^3$. As $d_3(t^3) = 3 t^2 \cdot d_3(t)$, if $d_3(t) \neq 0$ then $t^3$ would not by a permanent cycle, a contradiction.) Thus there exists a class $\bar{t} \in H^2(\mathcal{M}^\orientationpreserving(V_{d});\bQ)$ restricting to $t \in H^2(V_{d};\bQ)$. We may therefore construct the class $\kappa_{\bar{t}^n c} := \pi_!(\bar{t}^n c) \in H^*(\mathcal{M}^\orientationpreserving(V_{d});\bQ)$.

However, the class $\bar{t}$ is not uniquely determined by the above discussion: if $\delta t \in H^2(\mathcal{M}^\orientationpreserving(V_{d});\bQ)$ is any class then $\bar{\bar{t}} =\bar{t} + \pi^*(\delta t)$ is another possible choice, and we then have 
$$\kappa_{\bar{\bar{t}}^n c} = \pi_!((\bar{t} + \pi^*(\delta t))^n c) = \kappa_{\bar{t}^n c} + (\delta t)(n \cdot \kappa_{\bar{t}^{n-1} c}) + (\delta t)^2  \cdots,$$
a potentially different cohomology class.

By the formula in Lemma \ref{lem:VdIdentifyDiff}, we may think of the derivation $d_3$ as being $\frac{\partial}{\partial t}$. From this point of view the polynomials in the classes $\kappa_{t^n c}$ that lie in the kernel of $d_3 = \frac{\partial}{ \partial t}$ are precisely those that are independent of the choice of $\bar{t}$ when evaluated in $H^*(\mathcal{M}^\orientationpreserving(V_{d});\bQ)$ as described above.
\end{remark}

\subsection{Another $\Spin^c(6)$ example}\label{sec:OtherRat}

For a simply-connected manifold $W$ of dimension $2n \geq 6$, the formula of Theorem \ref{thm:general-str} for the homology of $\cM^\orientationpreserving(W)$ in a range of degrees seems at first glance as though it only depends on the equivariant homotopy type of the $\GL_{2n}(\bR)$-space $\Theta$ having an $n$-co-connected equivariant map $u : \Theta \to \bZ^\times$ and an $n$-connected equivariant map $\str_W : \Fr(TW) \to \Theta$. However, the codomain of the map \eqref{eq:11} is the disconnected space $(\Omega^\infty MT\Theta)\moddd \hAut(u)$, and the different path-components of this space can have different cohomology, even rationally. In this section we give an example of this behaviour.

\begin{construction}
Let $V \to S^2$ be the unique non-trivial 5-dimensional real vector bundle, and $M = S(V)$ be its sphere bundle; it is an $S^4$-bundle over $S^2$ with the same homology as $S^4 \times S^2$. If we write $\pi : M \to S^2$ for the bundle projection, then there is an isomorphism $TM \cong \pi^*(V)\oplus \epsilon^1$. In particular, the $\Spin^c$-structure on $V$ given by a generator of $H^2(S^2;\bZ)$ gives one on $M$ (which is $\Spin^c$-nullbordant), and the corresponding map $\ell_M : M \to B\Spin^c(6)$ is 3-connected. This induces a $\Spin^c$-structure on $M_g := M \# g (S^3 \times S^3)$ such that $\ell_{M_g} : M_g \to B\Spin^c(6)$ is also 3-connected.
\end{construction}

Let $\theta : B\Spin^c(6) \to B\SO(6)$. As in the last section, if $K \leq \hAut(\theta)$ is the submonoid of those path components that stabilise the $\theta$-structure $\ell_{M_g}$ up to diffeomorphism of $M_g$, then there is a fibration sequence
$$\mathcal{M}^{\theta}(M_g,\ell_{M_g}) \lra \mathcal{M}^\orientationpreserving(M_g) \lra BK.$$

\begin{lemma}
We have $\hAut(\theta) \simeq \bZ^\times \ltimes K(\bZ,2)$.
\end{lemma}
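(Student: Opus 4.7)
My plan is to produce a natural morphism of topological monoids $\Phi : \bZ^\times \ltimes K(\bZ,2) \to \hAut(\theta)$ and show it is a weak equivalence by direct computation. The map $\theta$ is a principal $K(\bZ,2)$-fibration classified by $W_3 \in H^3(B\SO(6);\bZ)$, since $U(1) \to \Spin^c(6) \to \SO(6)$ is a principal $U(1)$-bundle via $w \cdot [s,z] = [s, wz]$. The principal action provides a map $K(\bZ,2) \to \hAut(\theta)$, while the involution on $\Spin^c(6)$ given by $[s,z] \mapsto [s, \bar z]$ covers the identity on $\SO(6)$ and induces an involution of $B\Spin^c(6)$ over $B\SO(6)$, whose restriction to the fibre $K(\bZ,2)$ is $g \mapsto -g$. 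Combining these yields $\Phi$, with the semidirect product structure reflecting the fact that conjugation by the involution inverts translations.

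To show $\Phi$ is a weak equivalence, I would first identify all fibrewise self-maps of $\theta$. Such a self-map is a section of $\mathrm{pr}_1 : B\Spin^c(6) \times_{B\SO(6)} B\Spin^c(6) \to B\Spin^c(6)$, which is the principal $K(\bZ,2)$-bundle classified by $\theta^* W_3 = 0$ and hence trivialised by the diagonal; sections therefore correspond naturally to $\Map(B\Spin^c(6), K(\bZ,2))$. Using that $B\Spin^c(6)$ is simply-connected with $H^2(B\Spin^c(6);\bZ) = \bZ$ (computed via the Serre spectral sequence $K(\bZ,2) \to B\Spin^c(6) \to B\SO(6)$, in which the generator $u \in H^2(K(\bZ,2);\bZ)$ transgresses to $W_3$, so $H^2 = 2\bZ \cdot u = \bZ \cdot \tilde c$ with $i^*(\tilde c) = 2u$) one obtains
$$\Map(B\Spin^c(6), K(\bZ,2)) \simeq \coprod_{n \in \bZ} K(\bZ,2).$$

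A section $f = n \tilde c : B\Spin^c(6) \to K(\bZ,2)$ corresponds to the fibrewise self-map $p \mapsto f(p) \cdot p$, and on a fibre $K(\bZ,2)$ (trivialised via a basepoint $p_0$) it acts as $g \mapsto f(g \cdot p_0) + g = (2n+1)g$, which is a self-equivalence iff $n \in \{0, -1\}$. Since a fibrewise self-map of a fibration over a connected base is a weak equivalence iff its restriction to one fibre is, $\hAut(\theta)$ consists of precisely two components, each weakly equivalent to $K(\bZ,2)$, indexed by $\bZ^\times$. A direct check then shows $\Phi$ is bijective on $\pi_0$ and the identity on each identity component, and so is a weak equivalence.

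The main delicate point will be correctly computing $i^*(\tilde c) = 2u$: this factor of $2$ is precisely what forces exactly two (rather than infinitely many) components of $\Map(B\Spin^c(6), K(\bZ,2))$ to consist of self-equivalences. It can also be derived from the fact that the inclusion $i: U(1) \hookrightarrow \Spin^c(6)$, $z \mapsto [1,z]$, when composed with the determinant $\Spin^c(6) \to U(1)$, $[s,z] \mapsto z^2$, is the squaring map, hence multiplication by $2$ on $\pi_1$.
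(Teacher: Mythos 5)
Your construction of the comparison map $\Phi : \bZ^\times \ltimes K(\bZ,2) \to \hAut(\theta)$ is exactly the paper's: the principal $K(\bZ,2)$-action coming from the classifying map $W_3 : B\SO(6) \to K(\bZ,3)$, together with the conjugation involution arising from $\Spin^c(6) = \Spin(6) \times_{\bZ^\times} U(1)$. Where you part ways is in proving $\Phi$ is a weak equivalence. The paper disposes of this in one clause, saying it follows ``by obstruction theory just as in \S\ref{sec:ChangeOfStr} or the proof of Lemma~\ref{lem:WgIdhAut}'', i.e.\ by setting up relative lifting problems against $\theta$ and checking obstructions; that style of argument has to be adapted here, since the fibre of $\theta$ is $K(\bZ,2)$ rather than a $K(\bZ/2,1)$, so the relevant obstruction group $H^2(B\Spin^c(6);\bZ)\cong\bZ$ is \emph{not} zero and $\pi_0\hAut(\theta)$ is genuinely nontrivial. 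You instead identify the full space of fibrewise self-maps of $\theta$ directly with $\Map(B\Spin^c(6),K(\bZ,2))$, by observing that $\theta^*\theta$ is a trivialised principal $K(\bZ,2)$-fibration; compute $\pi_0$ of this mapping space as $H^2(B\Spin^c(6);\bZ)\cong\bZ$; and single out the two invertible components by computing that a classifying class $n\tilde c$ restricts on a fibre to multiplication by $2n+1$ on $\pi_2$, using the key fact $i^*\tilde c = 2u$. The routes are closely related -- both ultimately hinge on the Serre spectral sequence computation of $H^2(B\Spin^c(6);\bZ)$ and the factor of $2$ coming from $W_3$ -- but your version is more explicit: it identifies the whole monoid of fibrewise self-maps, not just its invertible part, and makes visible exactly why only \emph{two} of the $\bZ$-many components of $\Map(B\Spin^c(6),K(\bZ,2))$ consist of equivalences. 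That explicitness is a real gain, since the terse obstruction-theory pointer in the paper cannot be applied verbatim and would require the reader to reconstruct essentially the same cohomological bookkeeping you carry out.
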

\begin{proof}
The fibration $\theta : B\Spin^c(6) \to B\SO(6)$ has fibre $K(\bZ,2)$, and is principal, so there is an action of $K(\bZ,2)$ on $B\Spin^c(6)$ fibrewise over $B\SO(6)$. Furthermore, writing $\Spin^c(6) = \Spin(6) \times_{\bZ^\times} \U(1)$ we see that complex conjugation on the $\U(1)$ factor gives an involution $c$ of $B\Spin^c(6)$ over $B\SO(6)$. Together these give a map $\bZ^\times \ltimes K(\bZ,2) \to \hAut(\theta)$ which can be shown to be an equivalence by obstruction theory just as in \S\ref{sec:ChangeOfStr} or the proof of Lemma \ref{lem:WgIdhAut}.
\end{proof}

\begin{lemma}\label{lem:M_ghAut}
We have $K = \hAut(\theta)$.
\end{lemma}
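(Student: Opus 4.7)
The plan is to show that both path components of $\hAut(\theta) \simeq \bZ^\times \ltimes K(\bZ,2)$ act trivially on the class $[M_g, \ell_{M_g}] \in \pi_0(\cM^\theta)$. The identity component is connected and so automatically acts trivially, so the real content is to show that the nontrivial component, represented by the complex conjugation involution $c$, also stabilises this class. Concretely, I must exhibit an orientation-preserving diffeomorphism $\phi : M_g \to M_g$ such that $\phi^* \ell_{M_g}$ and $c \circ \ell_{M_g}$ are homotopic as $\theta$-structures on $M_g$.

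First I would reduce the problem to a cohomological condition. Since $M_g$ is simply connected, the set of homotopy classes of $\Spin^c$-lifts of a fixed classifier of an oriented rank-$6$ vector bundle is a torsor over $H^2(M_g;\bZ) \cong \bZ$, via $c_1$. The involution $c$ acts by $c^* c_1 = -c_1$, so $c \circ \ell_{M_g}$ has $c_1 = -x$, where $x := \ell_{M_g}^* c_1$ is the generator of $H^2(M_g;\bZ)$. The required equivalence of $\theta$-structures therefore reduces to: find an orientation-preserving $\phi$ with $\phi^* x = -x$.

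Next I would construct such a $\phi$ on $M = S(V)$ itself. Let $s : S^2 \to S^2$ be any orientation-reversing diffeomorphism. Rank-$5$ real vector bundles over $S^2$ are classified by $\pi_1(\SO(5)) \cong \bZ/2$, so $s^* V$ is again the non-trivial bundle, hence isomorphic to $V$; choose a bundle isomorphism $\tilde s : V \to V$ covering $s$, which restricts to a diffeomorphism $\tilde s : M \to M$ covering $s$. The map $\tilde s$ acts fibrewise by a linear isomorphism of determinant $\pm 1$; after composing, if necessary, with the fibrewise antipodal map on $S(V)$ (which reverses the orientation of each $S^4$-fibre), I obtain a diffeomorphism $\phi_M : M \to M$ covering $s$ that reverses orientations on both the base and the fibres, and is thus orientation-preserving on the total space. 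By naturality, $\phi_M^*$ acts on $H^2(M;\bZ) = H^2(S^2;\bZ)\cdot x$ as $s^* = -1$.

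Finally, I would transport $\phi_M$ to a diffeomorphism of $M_g$. After an isotopy I may arrange that $\phi_M$ is the identity on a small embedded disk $D^6 \subset M$: first isotope it to fix a point, then to have derivative the identity at that point, then use the contractibility of the space of orientation-preserving diffeomorphisms of $\bR^6$ fixing the origin with derivative the identity. Performing the $g$ connected sums with $S^3 \times S^3$ inside this disk and extending $\phi_M$ by the identity on each summand yields an orientation-preserving diffeomorphism $\phi : M_g \to M_g$, and under the natural isomorphism $H^2(M_g;\bZ) \cong H^2(M;\bZ)$ (since $H^2(S^3 \times S^3) = 0$) we have $\phi^* x = -x$. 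The main subtlety in this plan is the orientation bookkeeping in the construction of $\phi_M$, ensuring that combining $\tilde s$ with the fibrewise antipodal produces an orientation-preserving map of $M$; this amounts to a straightforward parity count in the splitting of the tangent bundle $TM \cong \pi^* V \oplus \epsilon^1$.
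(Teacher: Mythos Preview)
Your proof is correct and is essentially the paper's first proof: reduce to finding an orientation-preserving diffeomorphism acting as $-1$ on $H^2$, lift a degree $-1$ diffeomorphism of $S^2$ to the sphere bundle, compose with the fibrewise antipodal map to fix the orientation, and extend to $M_g$ after isotoping to fix a disc. The paper also offers an alternative proof via bordism: using that $M_g$ is a $\Spin^c$-boundary, one shows $\alpha(M_g,\ell_{M_g}) \in \pi_0(MT\Spin^c(6))$ is a multiple of the class of $S^6$, which is fixed by $\hAut(\theta)$ since the $\Spin^c(6)$-structure on $S^6$ is unique given its orientation.
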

Let us give two proofs of this lemma, one in terms of the manifolds themselves, and one using the infinite loop spaces of the relevant Thom spectrum.
\begin{proof}
The proof of the previous lemma shows that if $\ell$ and $\ell'$ are two $\theta$-structures on $M_g$ then there is a unique obstruction to them being homotopic, namely
$$\ell^*(t) - (\ell')^*(t) \in H^2(M_g;\bZ).$$
We therefore see that $\ell_{M_g}$ and $c\circ \ell_{M_g}$, where $c$ is the involution of $B\Spin^c(6)$ over $B\SO(6)$, are not fibrewise homotopic as the obstruction is $2\ell_{M_g}^*(t) \neq 0 \in H^2(M_g;\bZ)$. 

However, pulling back the vector bundle $V \to S^2$ along a diffeomorphism of $S^2$ of degree $-1$ gives an isomorphic oriented vector bundle, as $\pi_2(B\SO(5))= \bZ/2$, and so this degree $-1$ diffeomorphism is covered by a diffeomorphism $M \to M$ which acts as $-1$ on $H^2(M;\bZ)$ and as $+1$ on $H^4(M;\bZ)$, so is orientation-reversing. Composing this with the fibrewise antipodal map of $\pi : M \to S^2$ gives a diffeomorphism $\varphi: M \to M$ which acts as $-1$ on both $H^2(M;\bZ)$ and $H^4(M;\bZ)$, so is orientation-preserving: we may then isotope it to fix a disc, and hence extend it to a diffeomorphism $\varphi_g : M_g \to M_g$ acting as $-1$ on $H^2(M;\bZ)$ and on $H^4(M;\bZ)$.

Now the $\theta$-structures $\ell_{M_g} \circ D\varphi_g$ and $c \circ \ell_{M_g}$ on $M_g$ are homotopic, as
$$(\ell_{M_g} \circ D\varphi_g)^*(t) = \varphi_g^*(\ell_{M_g}^*(t)) = - \ell_{M_g}^*(t) = \ell_{M_g}^*(-t) = (c \circ \ell_{M_g})^*(t).$$
This shows that $c \in \hAut(\theta)$ lies in the submonoid $K$, as it preserves the $\theta$-structure $\ell_{M_g}$ up to a diffeomorphism of $M_g$.
\end{proof}

\begin{proof}[Alternative proof]
By the discussion in Remark \ref{rem:Kreck}, the submonoid $K \leq \hAut(\theta)$ agrees with $\hAut(\theta)_{[M_g, \ell_{M_g}]}$ as long as $g \geq 3$, so is the stabiliser of $\alpha(M_g, \ell_{M_g}) \in \pi_0(MT\Spin^c(6))$.

Thomifying the map $B\Spin^c(6) \to B\Spin^c$ gives a fibre sequence of spectra
$$F \lra MT\Spin^c(6) \lra \Sigma^{-6}M\Spin^c$$
and it is easy to check that $F$ is connective and has $\pi_0(F) \cong \bZ$. We therefore have an exact sequence
$$\pi_0(F) \cong \bZ \lra \pi_0(MT\Spin^c(6)) \lra \pi_6(M\Spin^c) = \Omega_6^{\Spin^c},$$
and the left-hand map can be seen to send a generator to $\alpha(S^6, \ell_{S^6})$, where $\ell_{S^6}$ is the unique $\Spin^c(6)$-structure on $S^6$ compatible with its orientation.

As the $\Spin^c(6)$-manifold $M$ is constructed as the sphere bundle of a $\Spin^c$ vector bundle, its class is trivial in $\Omega_6^{\Spin^c}$ as it bounds the associated disc bundle; similarly for $M_g = M \# g(S^3 \times S^3)$. Thus $\alpha(M_g, \ell_{M_g})$ is a multiple of $\alpha(S^6, \ell_{S^6})$ (by taking Euler characteristic we see that it is $2-g$ times it) and so is fixed by $\hAut(\theta)$, as the $\Spin^c(6)$-structure on $S^6$ is unique given its orientation.
\end{proof}

We may therefore develop the following diagram of fibration sequences
\begin{equation*}
\begin{tikzcd} 
\mathcal{M}^{\theta}(M_g, \ell_{M_g}) \ar[r] \ar[equal]{d} & X_g \ar[r] \ar[d] & K(\bZ,3) \ar[d]\\
\mathcal{M}^{\theta}(M_g, \ell_{M_g}) \ar[r] \ar[d]  & \mathcal{M}^\orientationpreserving(M_g) \ar[r] \ar[d] & B(\bZ^\times
 \ltimes K(\bZ,2)) \ar[d]\\
* \ar[r] & B\bZ^\times \ar[equal]{r} & B\bZ^\times,
\end{tikzcd}
\end{equation*}
whose middle row is the fibration sequence, with lower middle arrow defined to make the bottom right-hand square commute, and $X_g$ as its homotopy fibre, and top right-hard square homotopy cartesian.

The calculation of the previous section applies to the top row, showing that
$$H^*(X_g;\bQ) = \mathrm{Ker}(d_3 \circlearrowright \bQ[\kappa_{t^n c} \, | \, c \in \cB, i \geq 0, |c|+2n > 6])$$
in a stable range, this time subject to the boundary conditions
\begin{align*}
d_3(\kappa_{te}) &= \kappa_e = \chi(M_{g}) = 4-2g\\
d_3(\kappa_{t^2 p_1}) &= 2 \kappa_{tp_1} = 0\\
d_3(\kappa_{t^4}) &= 4 \kappa_{t^3} = 0.
\end{align*}
However, now the Serre spectral sequence for the middle column gives the calculation
$$H^*(\mathcal{M}^\orientationpreserving(M_g);\bQ) = H^*(X_g;\bQ)^{\bZ^\times} = \mathrm{Ker}(d_3 \circlearrowright \bQ[\kappa_{t^n c} \, | \, c \in \cB, i \geq 0, |c|+2n > 6])^{\bZ^\times}$$
in a stable range, where the invariants are taken with respect to the involution $t \mapsto -t$.

Let us explain something of the structure of this ring in low degrees.  In particular, we shall see that unlike the previous examples it not a free graded-commutative algebra, even in the stable range where our formulae apply.
Before taking $\Z^\times$-invariants, in degree 2 the kernel is spanned by $\{\kappa_{p_2}, \kappa_{p_1^2}, \kappa_{t^2 p_1}, \kappa_{t^4}\}$, and these classes are all fixed by the involution, giving
$$\dim_\bQ H^2(\mathcal{M}^\orientationpreserving(M_g);\bQ) = 4.$$
In degree 4 the kernel of $d_3$ is 16 dimensional, spanned by the 10-dimensional vector space $\mathrm{Sym}^2(\bQ\{\kappa_{p_2}, \kappa_{p_1^2}, \kappa_{t^2 p_1}, \kappa_{t^4}\})$ along with the classes
\begin{align*}
& \kappa_{te}\kappa_{p_2} - (4-2g)\kappa_{tp_2}\\
& \kappa_{te}\kappa_{p_1^2} - (4-2g)\kappa_{tp_1^2}\\
& (4-2g)\kappa_{t^3 p_1} - 3\kappa_{t^2p_1}\kappa_{te}\\
& (4-2g)\kappa_{t^5} - 5\kappa_{t^4}\kappa_{te}\\
& \kappa_{p_1e}\\
& \kappa_{te}^2 - (4-2g) \kappa_{t^2e}.
\end{align*}
Of these, the last two classes are invariant under the involution while first four are \emph{anti-}invariant, and hence
$$\dim_\bQ H^4(\mathcal{M}^\orientationpreserving(M_g);\bQ) = 12.$$

In higher degrees, we find that even though, for example, the class $\kappa_{te}\kappa_{p_2} - (4-2g)\kappa_{tp_2}$ is not invariant under the involution, its square is invariant and therefore defines a class in $H^8(\mathcal{M}^\orientationpreserving(M_g);\bQ)$. Similarly with products of any two classes that are anti-invariant and in the kernel of $d_3$. In degree 16 we find the relation
\begin{align*}
&((\kappa_{te}\kappa_{p_2} - (4-2g)\kappa_{tp_2})(\kappa_{te}\kappa_{p_1^2} - (4-2g)\kappa_{tp_1^2}))^2\\
&\quad\quad = (\kappa_{te}\kappa_{p_2} - (4-2g)\kappa_{tp_2})^2 (\kappa_{te}\kappa_{p_1^2} - (4-2g)\kappa_{tp_1^2})^2
\end{align*}
among squares of classes of degree 8, showing that the ring is not free.

\section{Abelianisations of mapping class groups}\label{sec:AbCalc}

The theory described above may in principle be used for calculations in integral homology and cohomology, though this is of course far more difficult. In practice such calculations are restricted to low dimensions, and have a different flavour to those described in \S\ref{sec:RatCalc}. Here one must obtain information about the low-dimensional homology of $\Omega^\infty MT\Theta$, which is roughly the same as the low-dimensional homotopy of $\Omega^\infty MT\Theta$, which is the homotopy of the spectrum $MT\Theta$ in small positive degrees. But the spectrum $MT\Theta$ is non-connective, so computing its $\pi_i$ is comparable to computing $\pi_{i+2n}$ of a connective spectrum (in the alternative proof of Lemma \ref{lem:M_ghAut} we have already engaged with this a bit, though we avoided having to actually compute).

As an example of the kinds of calculations that one is required to make, and to give some ideas of the kinds of techniques that can be used to tackle them, in this section we shall survey the calculation in \cite{GR-WAb} of $H_1(\cM(W_{g,1});\bZ)$, and then describe analogous calculations for certain non-simply connected 6-manifolds.

Recall that for a manifold $W$, possibly with boundary, its \emph{mapping class group}\index{mapping class group} is
$$\Gamma_\partial(W) := \pi_0(\Diff_\partial(W)).$$
Equivalently, it is the fundamental group of $B\Diff_\partial(W)$, so by the Hurewicz theorem we may identify its abelianisation as 
$$\Gamma_\partial(W)^{ab} \cong H_1(B\Diff_\partial(W);\bZ) \cong H_1(\mathcal{M}(W);\bZ).$$

\subsection{The manifolds $W_{g,1}$}\label{sec:AbWgs}

We return to the $2n$-manifolds $W_{g,1}$ of \S\ref{sec:RatWgs}. Just as in that section, there is a map 
$$\cM(W_{g,1}) \simeq \cM^{\theta_n}(W_{g,1}, \ell_{W_{g,1}}) \lra \Omega^\infty MT\theta_n$$
that for $2n \geq 6$ is a homology isomorphism in degrees $\leq \frac{g-3}{2}$ onto the path component that it hits. In particular, as long as $g \geq 5$ we have isomorphisms
$$\Gamma_\partial(W_{g,1})^{ab} \cong H_1(\mathcal{M}(W_{g,1});\bZ) \cong H_1(\Omega^\infty_0 MT\theta_n ; \bZ) \cong \pi_1(MT\theta_n),$$
using that all path components of $\Omega^\infty_0 MT\theta_n$ are homotopy equivalent, that the Hurewicz map is an isomorphism (as this space is a loop space), and that $\pi_1$ of the space $\Omega^\infty_0 MT\theta_n$ is the same as that of the spectrum $MT\theta_n$.

In \cite{GR-WAb} we attempted to calculate this group, at least in terms of other standard groups arising in geometric topology. Here we shall summarise the results and general strategy of that paper, though we refer there for more details.

To state the main result, consider the bordism theory $\Omega_*^{\langle n \rangle}$ associated to the fibration $B\OO\langle n \rangle \to B\OO$ given by the $n$-connected cover, and represented by the spectrum $M\OO\langle n \rangle$ (cf.\ \cite[p.\ 51]{Stong}). The natural map $B\OO(2n)\langle n \rangle \to B\OO\langle n \rangle$ covering the stabilisation map $B\OO(2n) \to B\OO$ provides a map of spectra
$$s : MT\theta_n \lra \Sigma^{-2n} M\OO\langle n \rangle,$$
and on $\pi_1$ this gives a homomorphism $s_*: \pi_1(MT\theta_n) \to \Omega_{2n+1}^{\langle n \rangle}$. The group $\pi_1(MT\theta_n)$ is determined in terms of this as follows.

\begin{theorem}\label{thm:AbWgsThm}
There is an isomorphism
$$s_* \oplus f : \pi_1(MT\theta_n) \lra \Omega_{2n+1}^{\langle n \rangle} \oplus \begin{cases}
(\bZ/2)^2 & \text{ if $n$ is even}\\
0 & \text{ if $n$ is 1, 3, or 7}\\
\bZ/4 & \text{ else}
\end{cases}$$
for a certain homomorphism $f$.
\end{theorem}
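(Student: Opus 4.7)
The plan is to compute $\pi_1(MT\theta_n)$ by fitting the stabilisation map $s$ into a cofibre sequence of spectra and analysing the resulting long exact sequence. Since $s$ is induced by the map of $n$-connected covers $B\OO(2n)\langle n\rangle \to B\OO\langle n\rangle$, there is a natural cofibre sequence
\[
MT\theta_n \xrightarrow{s} \Sigma^{-2n} M\OO\langle n\rangle \to C,
\]
where $C$ is the Thom spectrum of the relevant stable virtual bundle over the cofibre. The homotopy fibre of the underlying map of base spaces is equivalent to the Stiefel space $\OO/\OO(2n)$, which is $(2n-1)$-connected; this gives $C$ enough connectivity to control its low homotopy groups in terms of finitely many cells.

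From the piece
\[
\pi_2(C) \to \pi_1(MT\theta_n) \xrightarrow{s_*} \Omega_{2n+1}^{\langle n\rangle} \to \pi_1(C)
\]
of the long exact sequence, I would first verify that $\pi_1(C)=0$ (so that $s_*$ is surjective), and then identify $\pi_2(C)$ with the case-dependent group. For this identification I would run the Atiyah--Hirzebruch spectral sequence converging to $\pi_*(C)$, with input controlled by the low-dimensional cells of $\OO/\OO(2n)$ and the pullback of $\gamma_{2n}$. The case bifurcation should appear as differentials and extensions involving classes that are Steenrod squares of primitive generators in $H^*(B\OO(2n)\langle n\rangle;\bZ/2)$; the dimensions $n \in \{1,3,7\}$ are distinguished by Adams' theorem on the Hopf invariant, which forces certain would-be obstruction classes to vanish in exactly those cases.

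Finally, I would construct $f$ as an explicit tuple of characteristic numbers --- for even $n$, two independent $\bZ/2$-valued Stiefel--Whitney-type numbers for $(2n+1)$-manifolds with $\theta_n$-structure that visibly retract off the kernel of $s_*$; for odd $n \notin \{1,3,7\}$, a single $\bZ/4$-valued invariant arising from a secondary cohomology operation detecting the failure of Hopf invariant one. The splitting would be confirmed by producing concrete $(2n+1)$-manifolds (for instance, total spaces of appropriate sphere bundles over $S^{n+1}$ with canonical $\theta_n$-structure) realising prescribed values of these invariants. The hard part will be the $\bZ/4$ case: both proving the extension is non-split (likely via a Toda bracket or secondary operation calculation) and geometrically exhibiting a class of order exactly $4$ require a genuine interplay between unstable obstruction theory on $B\OO(2n)\langle n\rangle$ and Adams' theorem, since the whole point is that these phenomena disappear at $n \in \{1,3,7\}$.
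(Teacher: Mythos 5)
Your outline has the right shape at the top level---fit $s$ into a (co)fibre sequence, identify a highly connected model for the third term, and read off a long exact sequence---but several of the specific claims would fail, and the most subtle part of the theorem (both the $\{1,3,7\}$ distinction and the splitting) is placed in the wrong spot.

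First, your plan puts the entire case bifurcation into the group $\pi_2(C)$ (equivalently $\pi_1(F)$, where $F$ is the fibre of $s$). That is not where it lives. The paper shows there is an $n$-connected map $\Sigma^{-2n}\SO/\SO(2n)\to F$, so by Freudenthal $\pi_1(F)\cong\pi_{2n+1}(\SO/\SO(2n))$, and by Paechter's calculation this is $(\bZ/2)^2$ for $n$ even and $\bZ/4$ for $n$ odd---with \emph{no} special behaviour at $n\in\{1,3,7\}$. The Hopf-invariant-one dimensions appear instead in the \emph{connecting map} $\partial:\Omega^{\langle n\rangle}_{2n+2}\to\pi_1(F)$ on the next step of the long exact sequence: one shows $\partial$ is surjective precisely when $n\in\{1,3,7\}$ by checking that $\bC\bP^2$, $\bH\bP^2$, $\bO\bP^2$ have nonzero image mod $2$, and is zero otherwise. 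Your proposed Atiyah--Hirzebruch computation of $\pi_2(C)$ will not see this; you would have to analyse $\partial$ separately.

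Second, the assertion ``$\pi_1(C)=0$, so $s_*$ is surjective'' is wrong on both counts: $\pi_1(C)\cong\pi_0(F)\cong\bZ$ (coming from $\pi_{2n}(\SO/\SO(2n))\cong\bZ$). Surjectivity of $s_*$ holds anyway, but for the different reason that $\Omega^{\langle n\rangle}_{2n+1}$ is torsion and therefore maps trivially to $\bZ$.

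Third, the construction of $f$ and the proof that the sequence splits is the genuinely hard input, and your proposed route via $(2n+1)$-dimensional characteristic numbers and unstable obstruction theory on $B\OO(2n)\langle n\rangle$ is not what works. The paper first transports the problem using the isomorphism $\pi_1(MT\theta_n)\cong\Gamma_\partial(W_{g,1})^{\mathrm{ab}}$ valid for $g\gg 0$ (via Theorem~\ref{thm:homotopical}), and then constructs $f$ from the action of the mapping class group on $(H_n(W_{g,1};\bZ),\lambda,\mu)$: the abelianisation of the relevant arithmetic group is already known to be $(\bZ/2)^2$ or $\bZ/4$, and a careful comparison shows the resulting map splits the sequence. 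This also explains why the $\bZ/4$ case does not further decompose---the quadratic refinement $\mu$ exists precisely when $n\notin\{1,3,7\}$, and the order-$4$ phenomenon is read off from the abelianisation of the automorphism group of the quadratic lattice, not from a Toda bracket or a geometric construction of a $\theta_n$-manifold of order exactly $4$. (Your phrase ``proving the extension is non-split'' is also confusing: the theorem asserts the sequence \emph{is} split; presumably you meant that the $\bZ/4$ does not split further, but that comes for free from the arithmetic input.) Without the detour through the mapping class group and its action on middle homology, I do not see how to produce a splitting by your method.
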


Furthermore, the groups $\Omega_{2n+1}^{\langle n \rangle}$ are related to the stable homotopy groups of spheres as follows: there is a homomorphism\index{cokernel of $J$}
$$\rho': \mathrm{Cok}(J)_{2n+1} \lra \Omega_{2n+1}^{\langle n \rangle}$$
given by considering a stably framed manifold as a manifold with $B\OO\langle n \rangle$-structure, which is surjective and whose kernel is generated by the class of a certain homotopy sphere\index{homotopy sphere} $\Sigma_Q^{2n+1}$. In several cases it follows from work of Stolz that the class of $\Sigma_Q$ in $\mathrm{Cok}(J)$ is trivial---so $\rho'$ is an isomorphism---but this is not known in general. Combining the above with known calculations of $\Omega_{*}^{\langle 2 \rangle} = \Omega_{*}^{\langle 3 \rangle} = \Omega_{*}^{\Spin}$ and $\Omega_{*}^{\langle 4 \rangle} = \Omega_{*}^{\mathrm{String}}$ gives the following.

\begin{table}[h]
\centering
\label{table:1}
\begin{tabular}{c|c c c c c c c}
$n$              & 1 & 2 & 3 & 4 & 5  & 6 & 7 \\
\hline
$\pi_1(MT\theta_n)$ & 0 & $(\bZ/2)^2$ & 0 & $(\bZ/2)^4$ & $\bZ/4$ & $(\bZ/2)^2 \oplus \bZ/3$ & $\bZ/2$  \\
\end{tabular}
\end{table}

Let us outline the proof of Theorem \ref{thm:AbWgsThm} which was given in \cite{GR-WAb}, as we shall need to refer to details of this argument in the following section. The argument combines methods from (stable) homotopy theory with Theorem \ref{thm:homotopical} again. Starting with homotopy theory, we first let $F$ denote the homotopy fibre of the map of spectra $s : MT\theta_n \to \Sigma^{-2n} M\OO\langle n \rangle$, and construct a map $\Sigma^{-2n} \SO/\SO(2n) \to F$ which can be shown to be $n$-connected, for example by computing its effect on homology. On the other hand $\SO/\SO(2n)$ is $(2n-1)$-connected, so by Freudenthal's suspension theorem the map
$$\pi_{2n+1}(\SO/\SO(2n)) \lra \pi_{2n+1}^s(\SO/\SO(2n)) \cong \pi_1^s(\Sigma^{-2n} \SO/\SO(2n))$$
is an isomorphism for $n \geq 2$, and similarly for one homotopy group lower. It follows from a calculation of Paechter \cite{Paechter} that $\pi_{2n+1}(\SO/\SO(2n))$ is $(\bZ/2)^2$ if $n$ is even and is $\bZ/4$ if $n$ is odd, and also that $\pi_{2n}(\SO/\SO(2n)) \cong \bZ$. Putting the above together, we find an exact sequence
\begin{equation}\label{eq:SES}
\Omega_{2n+2}^{\langle n \rangle} \overset{\partial} \lra  \begin{cases}
(\bZ/2)^2 & \text{ if $n$ is even}\\
\bZ/4 & \text{ if $n$ is odd}
\end{cases} \lra \pi_1(MT\theta_n) \lra \Omega_{2n+1}^{\langle n \rangle} \lra \bZ.
\end{equation}
The rightmost map is zero (as its domain is easily seen to be a torsion group). In the cases $n \in \{1,3,7\}$ it can be shown that the images of $\bC\bP^2$, $\bH\bP^2$, and $\bO\bP^2$ under the leftmost map are non-zero modulo 2, so the leftmost map is surjective. In the remaining cases one must show that the leftmost map is zero, and that the resulting short exact sequence is split, via a homomorphism $f$ as in the statement of Theorem \ref{thm:AbWgsThm}.

At this point is is convenient to use the isomorphism $\Gamma_\partial(W_{g,1})^{ab} \cong \pi_1(MT\theta_n)$ for some $g \gg 0$. The action of $\Gamma_\partial(W_{g,1})$ on $H_n(W_{g,1};\bZ)$ respects the $(-1)^n$-symmetric intersection form $\lambda$, and if $n \neq 1$, $3$, or $7$ then it also respects a certain quadratic refinement $\mu$ of this bilinear form. This yields a homomorphism
$$\Gamma_\partial(W_{g,1}) \lra \mathrm{Aut}(H_n(W_{g,1};\bZ), \lambda, \mu).$$
These automorphism groups have been studied by other authors, and their abelianisations have been identified (for $g \gg 0$) as $(\bZ/2)^2$ if $n$ if even or $\bZ/4$ if $n$ is odd. A careful analysis of the maps involved shows that the resulting homomorphism
$$f : \pi_1(MT\theta_n) \cong \Gamma_\partial(W_{g,1})^{ab} \lra \mathrm{Aut}(H_n(W_{g,1};\bZ), \lambda, \mu)^{ab} \cong \begin{cases}
(\bZ/2)^2 & \text{ if $n$ is even}\\
\bZ/4 & \text{ if $n$ is odd}
\end{cases}$$
splits the short exact sequence arising from \eqref{eq:SES}, as required.

\begin{remark}
The Pontryagin dual of the finite abelian group $H_1(\cM(W_{g,1});\bZ)$ calculated here is the torsion subgroup of $H^2(\cM(W_{g,1});\bZ)$. The torsion free quotient of the latter group has been analysed in detail by Krannich and Reinhold \cite{KR}. The (unknown, at present) order of the element $[\Sigma_Q] \in \mathrm{Cok}(J)_{2n+1}$ arises there too.
\end{remark}

\subsection{Some non-simply-connected 6-manifolds}

For the example discussed in the previous section the theory described above is not the only way to calculate $\Gamma_\partial(W_{g,1})^{ab}$, because Kreck \cite{KreckAut} has described the groups $\Gamma_\partial(W_{g,1})$ up to two extension problems, and Krannich \cite{KrannichMCG} has recently resolved these extensions completely for $n$ odd, and determined enough about them to calculate $\Gamma_\partial(W_{g,1})^{ab}$ for all $n \geq 3$ and all $g \geq 1$. However, for even slightly more complicated manifolds such an alternative approach is not available, and we suggest that the theory described above is the best way to approach the calculation of $\Gamma_\partial(W)^{ab}$. In this section we illustrate this with an example which seems inaccessible by other means.

Let $G$ be a virtually polycyclic group,\index{virtually polycyclic} and consider a compact 6-manifold $W$ such that
\begin{enumerate}[(i)]
\item a map $\tau_W$ classifying the tangent bundle of $W$ admits a lift $\ell_W$ along
$$\theta : B\Spin(6) \times BG \overset{\mathrm{pr}_1}\lra B\Spin(6) \lra B\OO(6)$$
such that $\ell_W : W \to B\Spin(6) \times BG$ is 3-connected, and

\item $(W, \partial W)$ is 2-connected.
\end{enumerate}
Such manifolds exist for any virtually polycyclic $G$: these groups satisfy Wall's \cite{WallFin} finiteness condition ($F$) by \cite[p.183]{Ratcliffe}, and so also ($F_3$), so $W$ may be taken to be a regular neighbourhood of an embedding of a finite 3-skeleton of $BG$ into $\bR^6$. As further examples of such manifolds, one may take
\begin{equation}\label{ex:From3Mfld}
W=(M^3 \times D^3) \# g(S^3 \times S^3)
\end{equation}
where $M^3$ is a closed oriented 3-manifold that is irreducible (so that $\pi_2(M)=0$) and has virtually polycyclic fundamental group.

Recall from \S\ref{sec:pi1} that the Hirsch length of a virtually polycyclic group is the number of infinite cyclic quotients in a subnormal series.

\begin{theorem}\label{thm:NinasEx}
Suppose that $G$ is virtually polycyclic of Hirsch length $h$, and $W$ is a 6-manifold satisfying (i) and (ii) above, of genus $g(W) \geq 7 + h$. Then there is a short exact sequence
\begin{equation*}
0 \lra G^{ab} \lra \Gamma_\partial(W)^{ab} \lra \mathrm{ko}_7(BG) \lra 0
\end{equation*}
which is (non-canonically) split.
\end{theorem}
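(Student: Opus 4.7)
The plan is to adapt the argument of Section~\ref{sec:AbWgs} to the non-simply-connected setting. First I would verify that $\Theta := B\Spin(6) \times BG$, equipped with the composition $\theta : B\Spin(6) \times BG \to B\Spin(6) \to B\OO(6)$, serves as a Moore--Postnikov 3-stage of a tangent classifying map $\tau_W : W \to B\OO(6)$: the map $\ell_W$ is 3-connected by hypothesis (i), while the fibre of $\theta$ is $(\OO(6)/\Spin(6)) \times BG$, which is a 1-type since $G$ is discrete. One checks that $\Theta$ is spherical since $S^6$ admits a spin structure. Hypothesis (ii) together with Lemma~\ref{lem:aHutContr} (with $c=2$, $n=3$) then shows $\hAut(\theta\,\mathrm{rel}\,\partial W)$ is contractible. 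Theorem~\ref{thm:main-with-boundary}, extended to virtually polycyclic $\pi_1$ as in \S\ref{sec:pi1}, therefore provides a map $\alpha : \cM(W) \to \Omega^\infty MT\Theta$ inducing an integral homology isomorphism (onto the image path component) in degrees $\leq (g(W) - h - 5)/2$. The genus hypothesis $g(W) \geq 7 + h$ ensures this includes $H_1$, and by Hurewicz for the loop space target we obtain $\Gamma_\partial(W)^{ab} \cong \pi_1(MT\Theta)$.

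Since $\theta$ is pulled back along the projection to $B\Spin(6)$, we have $MT\Theta \simeq MT\Spin(6) \wedge BG_+$. The stable splitting $\Sigma^\infty BG_+ \simeq \bS \vee \Sigma^\infty BG$ combined with $\pi_1(MT\Spin(6)) = 0$ (the $n=3$ entry of the table in \S\ref{sec:AbWgs}) yields $\pi_1(MT\Theta) \cong \pi_1(MT\Spin(6) \wedge BG)$. To construct the right-hand map of the desired sequence, consider the composition $MT\Spin(6) \to \Sigma^{-6}M\Spin \xrightarrow{\mathrm{ABS}} \Sigma^{-6}\mathrm{ko}$, where the second map is the Atiyah--Bott--Shapiro orientation. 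Smashing with $BG$ and taking $\pi_1$ produces the map $\pi_1(MT\Theta) \to \mathrm{ko}_7(BG)$, using that the 7-connectedness of ABS yields $\pi_7(M\Spin \wedge BG) \cong \mathrm{ko}_7(BG)$.

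The hard part will be proving exactness of the short exact sequence. I would use $\pi_0(MT\Spin(6)) \cong \bZ$, a consequence of the cofibre sequence $F \to MT\Spin(6) \to \Sigma^{-6}M\Spin$ of \S\ref{sec:AbWgs} together with $\Omega_6^{\Spin} = \Omega_7^{\Spin} = 0$ and $\pi_0(F) = \bZ$. The Atiyah--Hirzebruch spectral sequence for $MT\Spin(6) \wedge BG$ then has $E_2^{1, 0} = H_1(BG; \bZ) = G^{ab}$ on the $(p+q=1)$ diagonal, and this contribution lies in the kernel of our map since the corresponding term in the target AHSS is $H_1(BG; \mathrm{ko}_6) = 0$. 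To upgrade this to an exact sequence, I would analyse the fibre $F'$ of $MT\Spin(6) \to \Sigma^{-6}\mathrm{ko}$, establishing $\pi_0(F') \cong \bZ$ and $\pi_1(F') = 0$ from the two successive fibre sequences (one involving the 3-connected map $\Sigma^{-6}\SO/\SO(6) \to F$ of \S\ref{sec:AbWgs}); the AHSS for $F' \wedge BG$ then gives $\pi_1(F' \wedge BG) \cong G^{ab}$ and allows one to conclude via the long exact sequence. Finally, for the splitting, $\pi_0(MT\Spin(6)) \cong \bZ$ gives a map $MT\Spin(6) \to H\bZ$ that is an isomorphism on $\pi_0$; smashing with $BG$ and taking $\pi_1$ produces a retraction $\pi_1(MT\Theta) \to H_1(BG; \bZ) = G^{ab}$, splitting the sequence.
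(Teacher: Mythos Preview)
Your reduction to computing $\pi_1(MT\Spin(6) \wedge BG_+)$ is correct and matches the paper, and your construction of the right-hand map via the Atiyah--Bott--Shapiro orientation is fine. But the proposed splitting has a genuine error: there is \emph{no} spectrum map $MT\Spin(6) \to H\bZ$ inducing an isomorphism on $\pi_0$. Such maps are classified by $H^0(MT\Spin(6);\bZ) \cong H^6(B\Spin(6);\bZ)$, and their effect on $\pi_0$ is computed by the Hurewicz map $\pi_0(MT\Spin(6)) \to H_0(MT\Spin(6);\bZ) \cong H_6(B\Spin(6);\bZ)$. Using $\Spin(6) \cong \mathrm{SU}(4)$ one finds $H^6(B\Spin(6);\bZ) = \bZ$ generated by the Euler class $e$, so the Hurewicz image of the generator $[S^6] \in \pi_0(MT\Spin(6))$ is determined by $\langle e(TS^6),[S^6]\rangle = \chi(S^6) = 2$. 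Thus the Hurewicz map is multiplication by $2$, and every spectrum map $MT\Spin(6) \to H\bZ$ has even image on $\pi_0$. (The implicit assumption that ``$\pi_0 \cong \bZ$ gives a Postnikov map to $H\bZ$'' is only valid for connective spectra, and $MT\Spin(6)$ is merely $(-6)$-connective.) A secondary gap: your long exact sequence for $F' \wedge BG$ still has a connecting map $\widetilde{\mathrm{ko}}_8(BG) \to G^{ab}$ which must be shown to vanish before one obtains a short exact sequence; this is the analogue of the paper's Lemma~\ref{lem:ConnectingTrivial} and requires an argument.

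This obstruction is exactly why the paper's proof of the splitting is so elaborate. Rather than a spectrum-level retraction, the paper introduces an auxiliary spectrum $E$ sitting in a cofibre sequence $E \to \Sigma^{-6}H\bZ/2 \xrightarrow{\beta\Sq^6} \Sigma H\bZ$ together with a comparison map $MT\Spin(6) \to E$; after reducing by naturality to the case of cyclic groups $G$, the splitting for $G = \bZ/2^k$ is established by a mod-$2$ cardinality count showing $\pi_1(E \wedge B\bZ/2^k_+)$ cannot be cyclic. The splitting is genuinely the hard part of the theorem, and a spectrum-level argument of the kind you propose would be too good to be true.
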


This short exact sequence was first established by Friedrich \cite{NinaThesis}, who also showed that it is split after inverting 2. We shall give a different argument to hers, which gives the splitting at the prime 2 as well.

The following three examples concern the manifolds $W$ of construction \eqref{ex:From3Mfld} with $M$ a 3-manifold having finite fundamental group, and $g \geq 7$ so the hypotheses of Theorem \ref{thm:NinasEx} are satisfied.

\begin{example}\label{ex:Ab1}
Let $M^3 = L_{p,q}^3$ be the $(p,q)$th lens space, with fundamental group $G=\bZ/p$ with $p$ prime. Then we have
$$\Gamma_\partial(W)^{ab} \cong \begin{cases}
\bZ/2 \oplus \bZ/4 & \text{if $p=2$}\\
\bZ/3 \oplus \bZ/9 & \text{if $p=3$}\\
(\bZ/p)^3 & \text{if $p\geq 5$}.
\end{cases}$$

The required calculation of $\mathrm{ko}_7(B\bZ/p)$ may be extracted from \cite[Example 7.3.1]{GB2} for $p=2$, and from the Atiyah--Hirzebruch spectral sequence, the fact that $\mathrm{ko}_*(B\bZ/p)[\tfrac{1}{2}]$ is a summand of $\mathrm{ku}_*(B\bZ/p)[\tfrac{1}{2}]$, and \cite[Remark 3.4.6]{GB1} for odd $p$.
\end{example}

\begin{example}
Let $M^3$ be the spherical 3-manifold with fundamental group $G=Q_8$. Then we have
$$\Gamma_\partial(W)^{ab} \cong (\bZ/2)^2 \oplus (\bZ/4)^2 \oplus \bZ/64.$$

The required calculation of $\mathrm{ko}_7(BQ_8)$ may be extracted from \cite[p.\ 138]{GB2}.
\end{example}

\begin{example}
Let $M^3 = \Sigma^3$ be the Poincar{\'e} homology 3-sphere, with fundamental group $G$ the binary icosahedral group, isomorphic to $SL_2(\bF_5)$. Then $g$ we have
$$\Gamma_\partial(W)^{ab} \cong (\bZ/5)^2 \oplus \bZ/9 \oplus \bZ/64.$$

As $\Sigma^3$ is a homology sphere, $H_1(BG;\bZ) \cong H_1(\Sigma^3;\bZ)=0$ so we must just calculate $\mathrm{ko}_7(BG)$. The order of $G \cong SL_2(\bF_5)$ is $120 = 2^3 \cdot 3 \cdot 5$, so we shall calculate the localisations $\mathrm{ko}_7(BG)_{(p)}$ for $p \in \{2,3,5\}$. Recall that the cohomology ring of $BG$ is $H^*(BG;\bZ) = \bZ[z]/(120 \cdot z)$ with $|z|=4$, which may be computed from the fibration sequence $S^3 \to \Sigma^3 \to BG$.

When $p$ is odd, $G$ has cyclic Sylow $p$-subgroup, so by transfer $\mathrm{ko}_7(BG)_{(p)}$ is a summand of $\mathrm{ko}_7(B\bZ/p)_{(p)}$, which we have explained in Example \ref{ex:Ab1} is $\bZ/9$ for $p=3$ and $(\bZ/5)^2$ for $p=5$. Comparing this with the Atiyah--Hirzebruch spectral sequence computing $\mathrm{ko}_*(BG)_{(p)}$ gives the claimed answer.

When $p=2$, $G$ has Sylow 2-subgroup $Q_8$, so by transfer $\mathrm{ko}_7(BG)_{(2)}$ is a summand of $\mathrm{ko}_7(BQ_8)_{(2)}$, which we have explained in Example \ref{ex:Ab1} is $(\bZ/4)^2 \oplus \bZ/64$. By a theorem of Mitchell and Priddy \cite[Theorem D]{MitchellPriddy} there is a stable splitting of $BQ_8$ as $BG_{(2)} \vee X \vee X$ for some spectrum $X$, from which it follows that $\mathrm{ko}_7(BG)_{(2)}$ is either $\bZ/64$ or $(\bZ/4)^2 \oplus \bZ/64$; we may see that the first case occurs from the Atiyah--Hirzebruch spectral sequence computing $\mathrm{ko}_*(BG)_{(2)}$.
\end{example}

The rest of this section is concerned with the proof of Theorem~\ref{thm:NinasEx}.

\subsubsection{Reduction to homotopy theory}

We have assumed that $(W, \partial W)$ is 2-connected, so by Lemma \ref{lem:aHutContr} we have that $\hAut(\theta, {\partial W}) \simeq *$. Thus by Theorem \ref{thm:homotopical} and the discussion in \S\ref{sec:pi1} there is a map
$$\alpha : \mathcal{M}(W) \simeq \mathcal{M}^{\theta}(W) \lra \Omega^\infty MT\theta = \Omega^\infty( MT\Spin(6) \wedge BG_+)$$
which induces an isomorphism on homology onto the path component that it hits in degrees $* \leq \frac{g(W)-h-5}{2}$, so in particular as long as $g(W) \geq 7 + h$ it induces an isomorphism on first homology. As described above the first homology of $\mathcal{M}(W)$ is the abelianisation of the mapping class group $\Gamma_\partial(W)$, so to establish Theorem \ref{thm:NinasEx} we must establish a short exact sequence
\begin{equation}\label{eq:ToSplit2}
0 \lra G^{ab} \lra \pi_1(MT\Spin(6) \wedge BG_+) \lra \mathrm{ko}_7(BG) \lra 0
\end{equation}
and show that it is split. 

\subsubsection{The exact sequence}

Specialising the construction in \S\ref{sec:AbWgs} to $2n=6$, we showed that there is a cofibration sequence of spectra
\begin{equation}\label{eq:CofSeq}
F \lra MT\Spin(6) \lra \Sigma^{-6}M\Spin,
\end{equation}
that $F$ is connective, and that $\pi_0(F) \cong \bZ$ and $\pi_1(F) \cong \bZ/4$. The Atiyah--Hirzebruch spectral sequence for $F \wedge BG_+$ gives isomorphisms
$$\pi_0(F \wedge BG_+) \cong \bZ \quad \quad\quad \pi_1(F \wedge BG_+) \cong \bZ/4 \oplus H_1(BG;\bZ)$$
where the splitting in the latter is induced by the retraction $S^0 \to BG_+ \to S^0$ of pointed spaces. Smashing the cofibration sequence \eqref{eq:CofSeq} with $BG_+$, the associated long exact sequence of homotopy groups has the form
$$\Omega_8^{\Spin}(BG) \overset{\partial}\lra \bZ/4 \oplus H_1(BG;\bZ) \lra \pi_1(MT\Spin(6) \wedge BG_+) \lra \Omega_7^{\Spin}(BG) \lra \bZ $$
and by naturality the long exact sequence for $G=\{e\}$ splits off of this one. As $\pi_1(MT\Spin(6))=0$ by Theorem \ref{thm:AbWgsThm}, and $\Omega_7^{\Spin}=0$ by \cite{MilnorSpin}, by the discussion in \S\ref{sec:AbWgs} this leaves an exact sequence
$$\widetilde{\Omega}_8^{\Spin}(BG) \overset{\partial}\lra H_1(BG;\bZ) \lra \pi_1(MT\Spin(6) \wedge BG_+) \lra \Omega_7^{\Spin}(BG) \lra 0.$$
The Atiyah--Bott--Shapiro map $M\Spin \to ko$ is 8-connected, so the induced map $\Omega_7^{\Spin}(BG) \to \mathrm{ko}_7(BG)$ is an isomorphism. To obtain the claimed short exact sequence we shall therefore prove the following.

\begin{lemma}\label{lem:ConnectingTrivial}
If $G$ is finitely-generated then the connecting map $\partial : \widetilde{\Omega}_8^{\Spin}(BG) \to H_1(BG;\bZ)$ is trivial.
\end{lemma}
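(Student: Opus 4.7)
I would first unpack the connecting homomorphism via the Puppe extension of the cofiber sequence defining $F$: the map $\partial$ is the effect on $\pi_1(\_ \wedge BG_+)$ of a spectrum map $\delta: \Sigma^{-7} M\Spin \to F$. Using the Atiyah--Hirzebruch spectral sequence for $F \wedge BG_+$, with $\pi_0(F) = \bZ$ and $\pi_1(F) = \bZ/4$ as the only relevant coefficient groups, one sees that $\tilde\pi_1(F \wedge BG) = E^\infty_{1,0} = H_1(BG;\bZ)$ is concentrated in this single filtration position, and that the projection onto it is induced by the Postnikov truncation $F \to H\pi_0(F) = H\bZ$. Consequently, $\tilde\partial$ is the map on $\tilde\pi_1(\_ \wedge BG)$ induced by the composite spectrum map $c = (F \to H\bZ) \circ \delta : \Sigma^{-7} M\Spin \to H\bZ$, which is classified by a cohomology class in $H^0(\Sigma^{-7} M\Spin;\bZ) = H^7(M\Spin;\bZ) \cong H^7(B\Spin;\bZ)$.

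The next step would be to reduce via naturality: the abelianisation $BG \to BG^{ab}$ induces an isomorphism on $H_1(\_;\bZ)$, so we may take $G$ abelian. Writing $G \cong \bZ^r \oplus A$ with $A$ finite, projections $BG \to B\bZ$ onto each of the $r$ free factors reduce the $\bZ^r$-summand of the image of $\tilde\partial$ to the map $\tilde\partial_\bZ : \tilde \Omega_8^{\Spin}(S^1) \to H_1(S^1;\bZ)$, which vanishes since $\tilde \Omega_8^{\Spin}(S^1) = \Omega_7^{\Spin} = 0$ by \cite{MilnorSpin}. Hence the image of $\tilde\partial$ lies in the finite torsion subgroup $A \subseteq G^{ab}$, and decomposing $A$ into $p$-primary components reduces the problem to $G = \bZ/p^k$ cyclic of prime-power order.

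A direct calculation from the UCT (using $H_*(B\Spin;\bZ) = H_*(K(\bZ,4);\bZ)$ in the relevant range, since $B\Spin \to K(\bZ,4)$ is an $8$-equivalence) gives $H^7(B\Spin;\bZ) \cong \bZ/2$, realized as the integer Bockstein of $w_6 \in H^6(B\Spin;\bZ/2)$ via the identity $\Sq^1 w_6 = w_7 \neq 0$. Thus the class $c$ is $2$-torsion; so the image of $\tilde\partial$ lies in the $2$-torsion subgroup of $H_1(BG;\bZ)$, which immediately handles the case $p \neq 2$.

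The hard case is $p = 2$, and it is where I expect the main obstacle. Here $c$ factors as $c = \beta \circ \bar c$ with $\bar c : \Sigma^{-7} M\Spin \to \Sigma^{-1} H\bZ/2$ the mod-$2$ reduction and $\beta$ the integer Bockstein, so that geometrically $\tilde\partial([M^8, f : M \to BG]) = f_*(\beta w_6(TM) \cap [M]_\bZ)$. The plan is to combine the Wu formula for closed spin $8$-manifolds---where Poincar{\'e} duality and the identity $\Sq^7 x = 0$ for $x \in H^1(M;\bZ/2)$ force $v_7 \cap [M]_{\bZ/2} = 0$---with a careful $2$-primary accounting using the finite generation of $H_1(BG;\bZ)$, to conclude that the (a priori $2$-divisible, $2$-torsion) class $f_*(\beta w_6(TM) \cap [M]_\bZ)$ vanishes in $H_1(BG;\bZ)$. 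The principal difficulty is reconciling the characteristic-class identity with the extension structure of $2$-primary torsion in $G^{ab}$; this is the step most likely to require a genuinely geometric input rather than a spectral-sequence comparison.
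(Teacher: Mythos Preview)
Your setup is sound and in fact closely parallels the paper's \emph{second} proof of this lemma (the one mentioned parenthetically after the commutative diagram in \S6.2.3).  But the paper's \emph{primary} proof is quite different and much shorter: after the same reduction to $G = \bZ/p^k$, it simply observes that the domain vanishes.  Since the Atiyah--Bott--Shapiro map $M\Spin \to ko$ is $8$-connected, $\widetilde{\Omega}_8^{\Spin}(B\bZ/p^k) \cong \widetilde{\mathrm{ko}}_8(B\bZ/p^k)$, and the latter group is zero---trivially from the Atiyah--Hirzebruch spectral sequence for odd $p$, and by a cited computation of Bayen--Bruner (or Greenlees' \cite{BGS}) for $p=2$.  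There is nothing to split and no characteristic-class analysis is needed.

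Your route can also be completed, but not via Wu formulas on $M$ as you propose; that is where the genuine gap lies.  You have already observed that $c$ factors as $\beta \circ \bar c$ with $\bar c$ corresponding to $w_6 \in H^6(B\Spin;\bZ/2)$.  The missing step is to factor one stage further: since $w_6 \cdot u_{-6} = \Sq^6 u_{-6}$, the map $\bar c$ factors as
\[
\Sigma^{-7} M\Spin \xrightarrow{u_{-6}} \Sigma^{-7} H\bZ/2 \xrightarrow{\Sq^6} \Sigma^{-1} H\bZ/2.
\]
Smashing with $BG_+$ and taking $\tilde\pi_1$, the connecting map therefore factors as
\[
\widetilde{\Omega}_8^{\Spin}(BG) \lra H_8(BG;\bZ/2) \xrightarrow{(\Sq^6)_*} H_2(BG;\bZ/2) \xrightarrow{\beta_*} H_1(BG;\bZ),
\]
and $(\Sq^6)_*$ on $H_8$ is zero by instability: it is Kronecker-dual to $\Sq^6$ acting on $H^2(BG;\bZ/2)$, which vanishes since $6 > 2$.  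This kills $\tilde\partial$ for \emph{every} $G$ at once, with no case analysis and no appeal to the geometry of $M$.  Your proposed Wu-formula argument on the source manifold would instead try to show $w_7(TM) \cap [M]_{\bZ/2} = 0$, but for a spin $8$-manifold one only gets $w_7 = \Sq^3 w_4$, which need not vanish; the vanishing genuinely happens on the $BG$ side, not the $M$ side.
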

\begin{proof}
If this connecting map were non-trivial for some finitely-generated $G$, then because every non-trivial element of $H_1(BG;\bZ) = G^{ab}$ remains non-trivial under some homomorphism $G^{ab} \to \bZ/p^k$ with $p$ prime and $k \geq 1$, by naturality this connecting map would be non-trivial for $G=\bZ/p^k$. So it suffices to show that the map is trivial in this case.

As $M\Spin \to ko$ is 8-connected, the map $\widetilde{\Omega}_8^{\Spin}(BG) \to \widetilde{\mathrm{ko}}_8(BG)$ is an isomorphism. The result then follows as $\widetilde{\mathrm{ko}}_8(B\bZ/p^k)=0$, by a trivial application of the Atiyah--Hirzebruch spectral sequence for $p$ odd and by \cite[Theorem 2.4]{BGS} for $p=2$.
\end{proof}

\subsubsection{Simplifying the splitting problem}

Let $x : F \to H\bZ$ be the 0-th Postnikov truncation of $F$. The composition
$$k: \Sigma^{-6} M\Spin \overset{\partial}\lra \Sigma F \overset{\Sigma x}\lra \Sigma H\bZ$$
represents, under the Thom isomorphism, some element $\kappa \in H^7(B\Spin;\bZ)$ which we identify as follows.

\begin{lemma}
We have $\kappa = \beta(w_6)$.
\end{lemma}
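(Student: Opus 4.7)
The plan is to identify $\kappa \in H^7(B\Spin;\bZ)$ by showing it is $2$-torsion and computing its mod-$2$ reduction as $w_7$; since $\beta(w_6)$ shares both properties and is, as we shall see, the unique nontrivial class in $H^7(B\Spin;\bZ)$ of this form, we will conclude $\kappa = \beta(w_6)$. The cofibre sequence $F \xrightarrow{i} MT\Spin(6) \xrightarrow{s} \Sigma^{-6}M\Spin$ gives a long exact sequence in integral cohomology through which $\kappa = \delta(u)$, where $u \in H^0(F;\bZ) \cong \bZ$ is the class singled out by the Postnikov truncation $x$ and $\delta : H^0(F;\bZ) \to H^1(\Sigma^{-6}M\Spin;\bZ) \cong H^7(B\Spin;\bZ)$ is the connecting homomorphism (using the Thom isomorphism for $M\Spin$ to identify the codomain with $H^7(B\Spin;\bZ)$).

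First I would show $\kappa$ is $2$-torsion by verifying that $2u$ lies in the image of $i^*$. Via the Thom isomorphism $H^0(MT\Spin(6);\bZ) \cong H^6(B\Spin(6);\bZ)$, the Euler class $e$ gives a class $e\cdot U \in H^0(MT\Spin(6);\bZ)$, whose image in $H^0(F;\bZ)$ is detected by pairing with the generator $\alpha(S^6,\ell_{S^6}) \in \pi_0(F) \cong \bZ$, yielding $\chi(S^6) = 2$. I would then compute $\bar\kappa := \kappa \bmod 2$ via the analogous $\bF_2$-coefficient sequence: the generator $\bar u \in H^0(F;\bF_2)$ fails to lie in the image of $i^*_{\bF_2}$ (since $\chi(S^6) \equiv 0 \pmod 2$), so $\bar\kappa$ is nonzero and lies in $\ker s^*_{\bF_2}$. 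After identifying $s^*$ (under the two relevant Thom isomorphisms) with the restriction along $B\Spin(6) \hookrightarrow B\Spin$, and using that $H^7(B\Spin;\bF_2) = \bF_2 \cdot w_7$ (by Wu's formula $\Sq^3 w_4 = w_3 w_4 + w_2 w_5 + w_1 w_6 + w_7 = w_7$ in $B\Spin$) while $w_7$ restricts to $0$ on $B\Spin(6)$ because its tautological bundle has rank $6$, we conclude $\bar\kappa = w_7$.

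To finish, $\beta(w_6)$ is manifestly $2$-torsion, and by the standard identity $\beta \bmod 2 = \Sq^1$ together with $\Sq^1 w_6 = w_1 w_6 + w_7 = w_7$ in $B\Spin$, its mod-$2$ reduction also equals $w_7$. Hence $\kappa - \beta(w_6)$ is $2$-torsion with vanishing mod-$2$ reduction, so it lies in $2\cdot H^7(B\Spin;\bZ)_{\mathrm{tors}}$. Since $H^7(B\Spin;\bZ)$ is purely $2$-primary torsion (no Pontryagin classes in odd degree) and its $2$-torsion has exponent $2$, we get $\kappa = \beta(w_6)$. The main obstacle will be the identification of $s^*$ with the restriction map: one must keep track of the two distinct Thom isomorphisms (for $-\gamma_6$ on $B\Spin(6)$ and for the stable bundle $\gamma$ on $B\Spin$) and verify that $s$ is stably the Thomification of the inclusion $B\Spin(6)\hookrightarrow B\Spin$ with respect to the natural comparison of these virtual bundles; everything else reduces to standard characteristic-class computations.
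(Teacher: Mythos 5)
Your proof follows essentially the same strategy as the paper: compute the mod-$2$ reduction $\bar\kappa = w_7$ using the long exact sequence of the cofibration~\eqref{eq:CofSeq}, and then conclude from the fact that the torsion of $H^*(B\Spin;\bZ)$ has exponent $2$, so that $H^7(B\Spin;\bZ)\cong\bZ/2$ with generator $\beta w_6$. Two remarks on the route you take. First, the opening paragraph showing that $\kappa$ is $2$-torsion is superfluous: you invoke at the end that $H^7(B\Spin;\bZ)$ is purely torsion of exponent $2$, which makes every element $2$-torsion automatically. Second, to show $\bar\kappa\neq 0$ you argue at $H^0(F;\bF_2)$, asserting $\bar{u}\notin\mathrm{im}(i^*_{\bF_2})$ ``since $\chi(S^6)\equiv 0\bmod 2$''. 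As stated this only rules out the single class $e\cdot U$ being mapped to $\bar u$; to make it a proof you would also need to observe that $H^6(B\Spin(6);\bF_2)$ is one-dimensional, spanned by the reduction of $e$. But this detour is not needed: you already note that $w_7$ restricts to $0$ on $B\Spin(6)$ for rank reasons, which says precisely that $s^*\colon H^1(\Sigma^{-6}M\Spin;\bF_2)\to H^1(MT\Spin(6);\bF_2)$ vanishes; by exactness the connecting map $H^0(F;\bF_2)\to H^1(\Sigma^{-6}M\Spin;\bF_2)$ is then surjective between one-dimensional spaces, hence an isomorphism, giving $\bar\kappa=w_7$ directly, which is what the paper does. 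With these simplifications your argument coincides with the paper's.
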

\begin{proof}
We have $H^7(B\Spin;\bZ/2) = \bZ/2\{w_7\}$, but $w_7$ of course vanishes when restricted to $B\Spin(6)$. The long exact sequence on $\bZ/2$-cohomology for \eqref{eq:CofSeq} contains the portion
$$H^1(MT\Spin(6);\bZ/2) \longleftarrow H^1(\Sigma^{-6}M\Spin;\bZ/2) = \bZ/2\{w_7 \cdot u_{-6}\} \longleftarrow H^1(\Sigma F ;\bZ/2)$$
and the left-hand map is zero, so the right-hand map is surjective. As the map $x$ is 1-connected, the map $\Sigma x$ is 2-connected, so we have an isomorphism
$$(\Sigma x)^* : \bZ/2\{\Sigma \iota\} = H^1(\Sigma H\bZ;\bZ/2) \overset{\sim}\lra H^1(\Sigma F;\bZ/2).$$
It follows that $\kappa$ reduces to $w_7 \neq 0$ modulo 2. The integral cohomology of $B\Spin$ is known to only have torsion of order 2 (this may be deduced from \cite{Kono}), so the Bockstein sequence for $B\Spin$ shows that $H^7(B\Spin;\bZ) = \bZ/2$, which must therefore be generated by $\beta w_6$ as this reduces modulo 2 to $\mathrm{Sq}^1(w_6)=w_7$. Thus $\kappa = \beta(w_6)$.
\end{proof}
As $w_6 \cdot u_{-6} = \mathrm{Sq}^6(u_{-6})$, we may write the map $k$ as the composition
$$k: \Sigma^{-6} M\Spin \overset{u_{-6}}\lra \Sigma^{-6} H\bZ/2 \overset{\mathrm{Sq}^6}\lra H\bZ/2 \overset{\beta}\lra \Sigma H\bZ$$
and so we may form a spectrum $E$ fitting into the diagram
\[\begin{tikzcd} 
MT\Spin(6) \rar \dar& \Sigma^{-6} M\Spin \rar{\partial} \dar{u_{-6}}& \Sigma F \dar{\Sigma x}\\
E \rar& \Sigma^{-6} H\bZ/2 \rar{\beta\mathrm{Sq}^6}& \Sigma H\bZ
\end{tikzcd}\]
in which the rows are homotopy cofibre sequences. Smashing with $BG_+$ and considering the map of long exact sequences gives
\[\begin{tikzcd} 
\Omega_8^{\Spin}(BG) \rar \dar & H_8(BG;\mathbb{Z}/2) \dar{(\beta\mathrm{Sq}^6)_*=0}\\
\bZ/4 \oplus H_1(BG;\mathbb{Z}) \rar{\mathrm{pr}_2} \dar&  H_1(BG;\mathbb{Z}) \dar\\
\pi_1(MT\Spin(6)\wedge BG_+) \rar \dar& \pi_1(E\wedge BG_+) \dar \arrow[bend right=40, dashed, swap]{u}{\sigma_G}\\
\Omega_7^{\Spin}(BG) \rar \dar& H_7(BG;\mathbb{Z}/2) \dar{(\beta\mathrm{Sq}^6)_*=0}\\
0 \rar& H_0(BG;\mathbb{Z})
\end{tikzcd}\]
where the columns are exact and the two indicated maps are zero by instability of the (co)homology operation $\beta\mathrm{Sq}^6$ in these degrees. It therefore suffices to show that the right-hand short exact sequence has a dashed splitting $\sigma_G$ as indicated. (Note that the commutativity of the top square gives another proof of Lemma \ref{lem:ConnectingTrivial}.)

\subsubsection{Reducing the splitting problem to cyclic groups}

The abelianisation homomorphism $a: G \to G^{ab}$ induces a map on the short exact sequences of the form
\[\begin{tikzcd} 
0 \rar \dar& H_1(BG;\mathbb{Z}) \rar \dar{=} & \pi_1(E \wedge BG_+) \dar{a_*} \rar & H_7(BG;\mathbb{Z}/2) \dar{a_*} \rar& 0 \dar\\
0 \rar & H_1(BG^{ab};\mathbb{Z}) \rar & \arrow[l, bend right=30, dashed, "\sigma_{G^{ab}}"] \pi_1(E \wedge BG^{ab}_+) \rar & H_7(BG^{ab};\mathbb{Z}/2) \rar& 0
\end{tikzcd}\]
so if we can show that the short exact sequence is split for $G^{ab}$, say via the dashed homomorphism $\sigma_{G^{ab}}$, then the sequence for $G$ is also split, via $\sigma_G := \sigma_{G^{ab}} \circ a_*$. This reduces us to studying abelian groups.

On the other hand, if such short exact sequences are split for each cyclic group of prime power order or $\bZ$, then we may write $G^{ab} = C_1 \oplus \cdots \oplus C_n$ for cyclic groups $C_i$ of prime power order or $\bZ$ and combine their splittings to obtain one for $G^{ab}$ (though of course it depends on the choice of expression for $G^{ab}$ as a sum of cyclic subgroups, so is not canonical). We have therefore reduced the question of splitting the short exact sequences \eqref{eq:ToSplit2} to the case of such cyclic groups.

\subsubsection{The splitting for $G=\mathbb{Z}/p^k$ or $\bZ$}

If $G=\bZ$ or $G=\bZ/p^k$ with $p$ odd then $H_7(BG;\bZ/2)=0$ and so the short exact sequence becomes
$$0 \lra G \lra \pi_1(E \wedge BG_+) \lra 0 \lra 0$$
which is certainly split. 

For $G=\bZ/2^k$ we must go to more trouble: in this case the short exact sequence becomes
\begin{equation}\label{eq:Z2kSeq}
0 \lra \bZ/2^k \lra \pi_1(E \wedge B\bZ/2^k_+) \lra \bZ/2 \lra 0
\end{equation}
so is either split or else $\pi_1(E \wedge B\bZ/2^k_+) \cong \bZ/2^{k+1}$. Consider first smashing the cofibre sequence defining $E$ with $S/2$, giving the cofibration sequence
$$S/2 \wedge E \lra S/2 \wedge \Sigma^{-6} H\bZ/2 \simeq \Sigma^{-6} H\bZ/2 \vee \Sigma^{-5} H\bZ/2 \overset{\mathrm{Sq}^7 \vee \mathrm{Sq}^6}\lra \Sigma H\bZ/2 \simeq S/2 \wedge H\bZ.$$
Now further smashing this with $B\bZ/2^k_+$ and considering the long exact sequence on homotopy groups gives an exact sequence
\begin{equation*}
\begin{tikzcd}
       H_8(B\bZ/2^k;\bZ/2) \oplus H_7(B\bZ/2^k;\bZ/2) \rar{\mathrm{Sq}^7_* \oplus \mathrm{Sq}^6_*} \ar[draw=none]{d}[name=X, anchor=center]{} & H_1(B\bZ/2^k;\bZ/2)
                \ar[rounded corners,
                to path={ -- ([xshift=2ex]\tikztostart.east)
                	|- (X.center) \tikztonodes
                	-| ([xshift=-2ex]\tikztotarget.west)
                	-- (\tikztotarget)}]{dl} \\
              \pi_1(S/2 \wedge E\wedge B\bZ/2^k_+) \rar \ar[draw=none]{d}[name=Y, anchor=center]{}& H_7(B\bZ/2^k;\bZ/2) \oplus H_6(B\bZ/2^k;\bZ/2) 
							\ar[rounded corners,
                to path={ -- ([xshift=2ex]\tikztostart.east)
                	|- (Y.center) \tikztonodes
                	-| ([xshift=-2ex]\tikztotarget.west)
                	-- (\tikztotarget)}]{dl}[near end]{\mathrm{Sq}^7_* \oplus \mathrm{Sq}^6_*}\\
						  H_0(B\bZ/2^k;\bZ/2)
\end{tikzcd}
\end{equation*}
and so $\pi_1(S/2 \wedge E\wedge B\bZ/2^k_+)$ has cardinality 8, because the homology operations $\mathrm{Sq}^7_*$ and $\mathrm{Sq}^6_*$ are zero in these degrees by instability. On the other hand computing with the long exact sequence as above gives an exact sequence
$$0 \lra \bZ = H_0(B\bZ/2^k ; \bZ) \lra \pi_0(E \wedge B\bZ/2^k_+) \lra H_6(B\bZ/2^k ; \bZ/2) = \bZ/2 \lra 0$$
which is split via $B\bZ/2^k_+ \to S^0$. Thus
$$\mathrm{Ker}(2 \cdot - : \pi_0(E \wedge B\bZ/2^k_+) \to \pi_0(E \wedge B\bZ/2^k_+)) = \bZ/2$$
and so, as $\pi_1(S/2 \wedge E\wedge B\bZ/2^k_+)$ has cardinality 8, it follows that
$$\mathrm{Coker}(2 \cdot - : \pi_1(E \wedge B\bZ/2^k_+) \to \pi_1(E \wedge B\bZ/2^k_+))$$
has cardinality 4. Thus $\pi_1(E \wedge B\bZ/2^k_+)$ cannot be cyclic, so \eqref{eq:Z2kSeq} is split.

\vspace{1ex}

\noindent \textbf{Acknowledgements.} The authors would like to thank M.\ Krannich for useful comments on a draft of this paper. S.\ Galatius was partially supported by the European Research Council (ERC) under the European Union's Horizon 2020 research and innovation programme (grant agreement No.\ 682922), and by NSF grant DMS-1405001. O.\ Randal-Williams was partially supported by the ERC under the European Union's Horizon 2020 research and innovation programme (grant agreement No.\ 756444).


\bibliographystyle{amsalpha}
\bibliography{../biblio}

\end{document}